\newtheorem{proposition}{Proposition}
\newtheorem{corollary}[proposition]{Corollary}
\newtheorem{theorem}[proposition]{Theorem}
\newtheorem{lemma}[proposition]{Lemma}
\theoremstyle{definition}
\newtheorem{remark}{Remark}
\newtheorem{conjecture}{Conjecture}
\newcommand{\B}[1]{\mathbf{#1}}
\newcommand{\g}{\mathfrak g}
\newcommand{\h}{\mathfrak h}
\newcommand{\n}{\mathfrak{n}}
\DeclareMathOperator{\id}{id}
\DeclareMathOperator{\Ker}{Ker} \DeclareMathOperator{\Image}{Im}
 \DeclareMathOperator{\End}{End}
\DeclareMathOperator{\spanv}{Span} 
\DeclareMathOperator{\Hom}{Hom} \DeclareMathOperator{\fin}{fin}
 \DeclareMathOperator{\red}{red}
 \DeclareMathOperator{\height}{ht}
\let\kk\Bbbk
\let\alb\allowbreak
\def\lc{,\alb\ldots,\alb}
\def\C{\mathbb C}
\def\N{\mathbb N}
\def\Q{\mathbb Q}
\def\S{\mathbb S}
\def\Z{\mathbb Z}
\def\one{\mathbf 1}
\def\Rb{\mathbf R}
\def\kg{\mathfrak k}
\def\U{U}
\def\Uc{\check U_q\g}
\def\ftext#1{{\let\thefootnote\relax
\footnotetext{\vskip-.9\baselineskip\noindent #1}}}
\begin{document}
\title{Equivariant quantization of Poisson homogeneous spaces and Kostant's
problem} \author{E.~Karolinsky, A.~Stolin, and V.~Tarasov} \date{}

\maketitle


\begin{sloppy}

\vskip-.6\baselineskip
\hrule height0pt
\thispagestyle{empty}

\begin{abstract}
We find a partial solution to the longstanding problem of Kostant concerning
description of the so-called locally finite endomorphisms of highest weight
irreducible modules. The solution is obtained by means of its reduction to a
far-reaching extension of the quantization problem. While the classical
quantization problem consists in finding $\star$-product deformations of the
commutative algebras of functions, we consider the case when the initial
object is already a noncommutative algebra, the algebra of functions within
$q$-calculus.
\smallskip

\noindent{\bf Mathematics Subject Classifications (2000).} 17B37, 17B10,
53D17, 53D55.

\smallskip

\noindent{\bf Key words:} quantized universal enveloping algebra, Kostant's
problem, highest weight module, equivariant quantization, reduced fusion
element.
\end{abstract}

\section{Introduction}

The present paper is a closing paper of the series of papers written by the
authors on relations between Poisson homogeneous spaces and their
quantization, solutions of the dynamical Yang-Baxter equation and Kostant's
problem \cite{KS, KMST, KST_0, KST}.

Poisson homogeneous spaces were introduced by Drinfeld, and their relations
with the so-called classical doubles \cite{D_poiss_lie} were explained in
\cite{D_poiss_hom_spaces}. Using that, the first author classified Poisson
homogeneous spaces of quasi-triangular type in terms of Lagrangian
subalgebras of $\g \times \g$, where $\g$ is a semisimple complex
finite-dimensional Lie algebra \cite{K_compl}.

Later, in \cite{Lu_dyn}, Lu discovered a strong similarity between
classification of the Poisson homogeneous spaces of \cite{K_compl} and
classification of trigonometric solutions of the classical dynamical
Yang-Baxter equation (CDYBE) of \cite{Sch}. This similarity was explained in
\cite{KS}. Furthermore, the authors gave a classification of Poisson
homogeneous spaces of triangular type, and, developing Lu's method, they
proved that there is a natural one-to-one correspondence between Poisson
homogeneous spaces of triangular type and rational solutions of the CDYBE.

The latter result opened a way for an explicit quantization of certain
Poisson homogeneous spaces of triangular type using methods of
\cite{ES_DYBE} (one should notice that the existence of quantization of
Poisson manifolds was proved by Kontsevich in \cite{Konts}).

This idea was realized in \cite{KMST, KST_0, KST} and first relations
between quantization of Poisson homogeneous spaces of triangular type with
the so-called Kostant's problem for the highest weight irreducible
$U(\g)$-modules were considered in \cite{KST}. It is worth to mention that,
in particular, this approach enabled the authors to obtain explicit formulas
for quantization of the Kirillov-Kostant-Lie Poisson bracket on reductive
co-adjoint orbits of $\g$.

The main goal of the present paper is to develop methods for an explicit
quantization of Poisson homogeneous spaces of quasi-triangular type and its
relations with Kostant's problem.

It turns out that in order to get explicit formulas in the quasi-triangular
case, one has to work, instead of the standard universal enveloping algebra
of $\g$ and the standard algebra of regular functions on the corresponding
Lie group $G$, with their quantized versions. As a consequence, it turns out
that the latter quantization problems are related to Kostant's problem for
the quantum universal enveloping algebra.

To be more precise, let $\check{U}_q\g$ be the quantized universal
enveloping algebra ``of simply connected type'' \cite{Joseph_book} that
corresponds to a finite dimensional split semisimple Lie algebra $\g$. Let
$L(\lambda)$ be the irreducible highest weight $\Uc$-module of highest
weight $\lambda$. The aim of this paper is to show that for certain values
of $\lambda$, the action map $\Uc\to\bigl(\End L(\lambda)\bigr)_{\fin}$ is
surjective. Here $(\End L(\lambda)\bigr)_{\fin}$ stands for the locally
finite part of $\End L(\lambda)$ with respect to the adjoint action of
$\Uc$. For the Lie-algebraic case $(q=1)$, this problem is known as the
classical Kostant's problem, see~\cite{Ja, Jo, Maz, M-S1}. The complete
answer to it is still unknown even in the $q=1$ case. However, there are
examples of $\lambda$ for which the action map $U(\g)\to\bigl(\End
L(\lambda)\bigr)_{\fin}$ is not surjective. Such examples exist even in the
case $\g$ is of type $A$ \cite{M-S2}.

The main idea of our approach to Kostant's problem, both in the
Lie-algebraic and quantum group cases, is that $\bigl(\End
L(\lambda)\bigr)_{\fin}$ has two other presentations. First, it follows from
the results of \cite{KST} that $\bigl(\End L(\lambda)\bigr)_{\fin}$ is
canonically isomorphic to $\Hom_U\bigl(L(\lambda),L(\lambda)\otimes
F\bigr)$, where $U$ is $U(\g)$ (resp.~$\Uc$), and $F$ is the algebra of
(quantized) regular functions on the connected simply connected algebraic
group $G$ corresponding to the Lie algebra $\g$. In other words, $F$ is
spanned by matrix elements of finite dimensional representations of $U$ with
an appropriate multiplication.

One more presentation of the algebra $\bigl(\End L(\lambda)\bigr)_{\fin}$
comes from the fact that $\Hom_U\bigl(L(\lambda),L(\lambda)\otimes F\bigr)$
is isomorphic as a vector space to a certain subspace $F'$ of $F$. The
subspace $F'$ can be equipped with a $\star$-multiplication obtained from
the multiplication on $F$ by applying the so-called reduced fusion element.
Then $\bigl(\End L(\lambda)\bigr)_{\fin}$ is isomorphic as an algebra to
$F'$ with this new multiplication. For certain values of $\lambda$, the same
$\star$-multiplication on $F'$ can be defined by applying the universal
fusion element, that yields the affirmative answer to Kostant's problem in
such cases.

More exactly, consider the triangular decomposition $U=U^-U^0U^+$. We have
$L(\lambda)=M(\lambda)/K_\lambda\one_\lambda$, where $M(\lambda)$ is the
corresponding Verma module, $\one_\lambda$ is the generator of $M(\lambda)$,
and $K_\lambda\subset U^-$. Consider also the opposite Verma module
$\widetilde{M}(-\lambda)$ with the lowest weight $-\lambda$ and the lowest
weight vector $\widetilde\one_{-\lambda}$. Then its maximal $U$-submodule is of
the form $\widetilde{K}_{\lambda}\cdot\widetilde\one_{-\lambda}$, where
$\widetilde{K}_{\lambda}\subset U^+$.
We have $F'=F[0]^{K_\lambda+\widetilde{K}_\lambda}$ --- the subspace of
$U^0$-invariant elements of $F$ annihilated by both $K_\lambda$ and
$\widetilde{K}_\lambda$. The $\star$-product on
$F[0]^{K_\lambda+\widetilde{K}_\lambda}$ has the form
\[f_1\star_\lambda f_2=
\mu\left(J^{\red}(\lambda)(f_1\otimes f_2)\right),
\]
where $\mu$ is the multiplication on $F$, and the reduced fusion element
$J^{\red}(\lambda)\in U^-\,\widehat{\otimes}\,U^+$ is computed in terms of
the Shapovalov form on $L(\lambda)$. Notice that for generic $\lambda$ the
element $J^{\red}(\lambda)$ is equal up to an $U^0$-part to the fusion
element $J(\lambda)$ related to the Verma module $M(\lambda)$, see for
example~\cite{ES_DYBE}.

We also investigate limiting properties of $J(\lambda)$. In particular, for
some values of $\lambda_0$ we can guarantee that $f_1\star_\lambda
f_2\rightarrow f_1\star_{\lambda_0}f_2$ as $\lambda\rightarrow\lambda_0$.
Also, for any $\lambda_0$ having a ``regularity property'' of this kind, the
action map $U\to(\End L(\lambda_0))_{\fin}$ is surjective. This gives the
affirmative answer to the (quantum version of) Kostant's problem.

For some values of $\lambda$, the subspace
$F[0]^{K_\lambda+\widetilde{K}_\lambda}$ is a subalgebra of $F[0]$,
and can be considered as (a flat deformation of) the algebra of regular
functions on some Poisson homogeneous space $G/G_1$. In those cases,
the algebra $\bigl(F[0]^{K_\lambda+\widetilde{K}_\lambda},\star_\lambda\bigr)$
is an equivariant quantization of the Poisson algebra of regular functions on
$G/G_1$.

This paper is organized as follows. In Section~\ref{Sect-U} we recall the
definition of the version of quantized universal enveloping algebra used in
this paper, and some related constructions that will be useful in the
sequel.
In Section~\ref{Sect-main} we construct an isomorphism
$\Hom_U\bigl(L(\lambda),L(\lambda)\otimes F\bigr)\simeq
F[0]^{K_\lambda+\widetilde{K}_\lambda}$ and, as a corollary, provide a
construction of a star-product on $F[0]^{K_\lambda+\widetilde{K}_\lambda}$
in terms of the Shapovalov form on $L(\lambda)$. 
In Section~\ref{Sect-limit} we study limiting properties of fusion elements
and the corresponding star-products. Namely, in
Subsection~\ref{Subsect-regularity-general} we introduce the notion of a
$J$-regular weight. We prove that in a neighborhood of a $J$-regular weight
the fusion element behaves nicely, and give a solution to the Kostant's
problem for such weights (see Proposition \ref{prop-Kostant}). In
Subsections \ref{Subsect-limit-one-root} and \ref{Subsect-limit-new} we
provide non-trivial examples of $J$-regular weights. Finally, in Subsection
\ref{Subsect-symm} we apply limiting properties of fusion elements to
quantize explicitly certain Poisson homogeneous spaces (see Theorem
\ref{thm-lim}).

\subsection*{Acknowledgments} The authors are grateful to Maria Gorelik,
Jiang-Hua Lu, and Catharina Stroppel for useful discussions on the topic of
the paper.

\section{Algebra $\Uc$}\label{Sect-U}

Let $\kk$ be the field extension of $\C(q)$ by all fractional powers $q^{1/n}$,
$n\in\N=\{1,2,3,\ldots\}$. We use $\kk$ as the ground field.

Let $(a_{ij})$ a finite type $r\times r$ Cartan matrix. Let $d_i$ be
relatively prime positive integers such that $d_ia_{ij}=d_ja_{ji}$.
For any positive integer $k$, define
\[
[k]_i=\frac{q^{kd_i}-q^{-kd_i}}{q^{d_i}-q^{-d_i}}\,,\qquad
[k]_i!=[1]_i\,[2]_i\,\ldots\,[k]_i\,.
\]

The algebra $\U=\Uc$ is generated by the elements $t_i$, $t_i^{-1}$, $e_i$,
$f_i$, $i=1\lc r$, subject to the relations
\begin{gather*}
t_it_i^{-1}=t_i^{-1}t_i=1\,,\\
t_ie_jt_i^{-1}=q^{d_i\delta_{ij}}e_j,\\
t_if_jt_i^{-1}=q^{-d_i\delta_{ij}}f_j,\\
e_if_j-f_je_i=\delta_{ij}\,\frac{k_i-k_i^{-1}}{q^{d_i}-q^{-d_i}},
\ \mathrm{where}\ k_i=\prod_{j=1}^rt_j^{a_{ij}},\\
\sum_{m=0}^{1-a_{ij}}
\frac{(-1)^m}{[m]_i!\,[1-a_{ij}-m]_i!}e_i^ke_je_i^{1-a_{ij}-k}=0\
\textrm{for}\ i\neq j,\\
\sum_{m=0}^{1-a_{ij}}
\frac{(-1)^m}{[m]_i!\,[1-a_{ij}-m]_i!}f_i^kf_jf_i^{1-a_{ij}-k}=0\
\textrm{for}\ i\neq j
\end{gather*}
Notice that $k_ie_jk_i^{-1}=q^{d_ia_{ij}}e_j$,
$k_if_jk_i^{-1}=q^{-d_ia_{ij}}f_j$.

\medskip
The algebra $\U$ is a Hopf algebra with the comultiplication $\Delta$,
the counit $\varepsilon$, and the antipode $\sigma$ given by
\[
\begin{array}{lll}
\Delta(t_i)=t_i\otimes t_i,&\varepsilon(t_i)=1,&\sigma(t_i)=t_i^{-1}\\
\Delta(e_i)=e_i\otimes1+k_i\otimes e_i,&\varepsilon(e_i)=0,&\sigma(e_i)=-k_i^{-1}e_i\\
\Delta(f_i)=f_i\otimes k_i^{-1}+1\otimes f_i,&\varepsilon(f_i)=0,&\sigma(f_i)=-f_ik_i.
\end{array}
\]
In what follows we will sometimes use the Sweedler notation for
comultiplication.

Let $U^0$ be the subalgebra of $\U$ generated by the elements $t_1\lc t_r$,
$t_1^{-1}\lc t_r^{-1}$. Let $U^+$ and $U^-$ be the subalgebras generated
respectively by the elements $e_1\lc e_r$ and $f_1\lc f_r$. We have
a triangular decomposition $U=U^-U^0U^+$. Denote by $\theta$ the involutive
automorphism of $U$ given by $\theta(e_i)=-f_i$, $\theta(f_i)=-e_i$,
$\theta(t_i)=t_i^{-1}$. Notice that $\theta$ gives an algebra isomorphism
$U^-\to U^+$. Set $\omega=\sigma\theta$, i.e., $\omega$ is the involutive
antiautomorphism of $U$ given by $\omega(e_i)=f_ik_i$,
$\omega(f_i)=k_i^{-1}e_i$, $\omega(t_i)=t_i$.

Let $(\h, \Pi, \Pi^\vee)$ be a realization of $(a_{ij})$ over $\Q$, that is,
$\h$ is (a rational form of) a Cartan subalgebra of the corresponding
semisimple Lie algebra, $\Pi=\{\alpha_1\lc \alpha_r\}\subset\h^*$ the set of
simple roots, $\Pi^\vee=\{\alpha_1^\vee\lc \alpha_r^\vee\}\subset\h$ the set
of simple coroots. Let $\Rb$ be the root system, $\Rb_+$ the set of positive
roots, and $W$ the Weyl group. Denote by $s_\alpha\in W$ the reflection
corresponding to a root $\alpha$. For $w\in W$, $\lambda\in\h^*$ we set
$w\cdot\lambda=w(\lambda+\rho)-\rho$. Let $u_1\lc u_l\in\h$ be the simple
coweights, i.e., $\langle\alpha_i,u_j\rangle=\delta_{ij}$. We denote by
$\rho$ the half sum of the positive roots.
For $w\in W$ and $\lambda\in\h^*$, we set
$w\cdot\lambda=w(\lambda+\rho)-\rho$.

Let
\[
Q_+=\sum_{\alpha\in\Pi}\Z_+\alpha\,.
\]
For $\beta=\sum_jc_j\alpha_j\in Q_+$, denote
$\height\beta=\sum_jc_j\in\mathbb Z_+$. For $\lambda, \mu\in\h^*$ we set
$\lambda\geq\mu$ iff $\lambda-\mu\in Q_+$.

Take an invariant scalar product $(\,\cdot\,|\,\cdot\,)$ on $\h^*$ such that
$(\alpha|\alpha)=2$ for any short root $\alpha$.
Then $d_i=\frac{(\alpha_i|\alpha_i)}{2}$.

Denote by $T$ the multiplicative subgroup generated by $t_1\lc t_r$.
Any $\lambda\in\h^*$ defines a character $\Lambda:T\to\kk$ given by
$t_i\mapsto q^{d_i\langle\lambda,u_i\rangle}$. We will write
$\Lambda=q^\lambda$.
Notice that $q^\lambda(k_i)=q^{d_i\langle\lambda,\alpha^\vee_i\rangle}$.
We extend $q^\lambda$ to the subalgebra $U^0$ by linearity.
We say that an element $x\in\U$ is of weight $\lambda$ if
$txt^{-1}=q^\lambda(t)x$ for all $t\in T$.

\smallskip
For a $\U$-module $V$, we denote by
\[
V[\lambda]=\{v\in V\,|\, tv=q^\lambda(t)v \mathrm{\ for\ all\ } t\in T\}
\]
the weight subspace of weight $\lambda$. We call the module
$V$ admissible if $V$ is a direct sum of finite-dimensional weight subspaces
$V[\lambda]$.

\smallskip
The Verma module $M(\lambda)$ over $\U$ with highest weight $\lambda$ and
highest weight vector $\one_\lambda$ is defined in the standard way:
\[
M(\lambda)=U^-\one_\lambda,\qquad U^+\one_\lambda=0,\qquad
t\one_\lambda=q^\lambda(t)\one_\lambda,\quad t\in T.
\]
The map $U^-\to M(\lambda)$, $y\mapsto y\one_\lambda$ is an isomorphism
of $U^-$-modules.

\smallskip
Set $U_+^\pm=\Ker\varepsilon|_{U^\pm}$ and denote by $x\mapsto(x)_0$ the
projection $U\to U^0$ along $U_+^-\cdot U+U\cdot U_+^+$. For any
$\lambda\in\h^*$ consider $\pi_\lambda:U^+\otimes U^-\to\kk$,
$\pi_\lambda(x\otimes y)=q^\lambda((\sigma(x)y)_0)$, and $\mathbb
S_\lambda:U^-\otimes U^-\to\kk$, $\S_\lambda(x\otimes
y)=\pi_\lambda(\theta(x)\otimes y)=q^\lambda((\omega(x)y)_0)$. We call
$\S_\lambda$ the Shapovalov form on $U^-$ corresponding to $\lambda$. We can
regard $\S_\lambda$ as a bilinear form on $M(\lambda)$.

Set
\[
K_\lambda=\{y\in U^-\,|\,\pi_\lambda(x\otimes y)=0\ \mbox{\rm
for all}\ x\in U^+\},
\]
\[
\widetilde{K}_\lambda=\{x\in U^+\,|\,\pi_\lambda(x\otimes y)=0\
\mbox{\rm for all}\ y\in U^-\}.
\]
Clearly, $K_\lambda$ is the kernel of $\S_\lambda$,
$\widetilde{K}_\lambda=\theta(K_\lambda)$. Notice that
$K(\lambda)=K_\lambda\cdot\one_\lambda$ is the largest proper submodule
of $M(\lambda)$, and $L(\lambda)=M(\lambda)/K(\lambda)$ is the irreducible
$U$-module with highest weight $\lambda$. Denote by
$\overline\one_\lambda$ the image of $\one_\lambda$ in $L(\lambda)$.

The following propositions are well known for the Lie-algebraic case. They
also hold for the case of $\U=\Uc$. Proposition \ref{prop-first} follows
from a simple $U_q \mathfrak{sl}(2)$ computation. For Propositions
\ref{prop-second}, \ref{prop-third}, \ref{prop-fourth}, see
\cite{Joseph_book}.

\begin{proposition}\label{prop-first}
Assume that $\lambda\in\h^*$ satisfies
$\langle\lambda+\rho,\alpha^\vee_i\rangle=n\in\N$ for a simple root $\alpha_i$.
Then $f_i^n$ is in $K_\lambda$.
\end{proposition}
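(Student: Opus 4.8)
The plan is to show that $f_i^n\one_\lambda$ is a singular (highest weight) vector in $M(\lambda)$ and then invoke the fact, recalled just above the proposition, that $K(\lambda)=K_\lambda\cdot\one_\lambda$ is the \emph{largest} proper submodule of $M(\lambda)$. Concretely, once I know that $e_j\bigl(f_i^n\one_\lambda\bigr)=0$ for every $j$, the cyclic submodule $U f_i^n\one_\lambda=U^- f_i^n\one_\lambda$ is a highest weight module of highest weight $\lambda-n\alpha_i$. Since $n\ge 1$, this weight is strictly less than $\lambda$, so $U^- f_i^n\one_\lambda$ cannot meet the weight-$\lambda$ line $\kk\one_\lambda$ and is therefore a proper submodule; hence it lies inside $K(\lambda)$, i.e.\ $f_i^n\one_\lambda\in K_\lambda\one_\lambda$. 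Because $y\mapsto y\one_\lambda$ identifies $U^-$ with $M(\lambda)$, this is exactly the assertion $f_i^n\in K_\lambda$.

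It remains to check that $f_i^n\one_\lambda$ is singular. For $j\ne i$ this is immediate: the defining relation $e_if_j-f_je_i=\delta_{ij}(k_i-k_i^{-1})/(q^{d_i}-q^{-d_i})$ gives $e_jf_i=f_ie_j$, so $e_jf_i^n\one_\lambda=f_i^n e_j\one_\lambda=0$ since $\one_\lambda$ is a highest weight vector. The only nontrivial case is $j=i$, which is the promised $U_q\mathfrak{sl}(2)$ computation inside the rank-one subalgebra generated by $e_i,f_i,k_i^{\pm1}$.

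For this I would first establish, by induction on $n$ from the relation $e_if_i-f_ie_i=(k_i-k_i^{-1})/(q^{d_i}-q^{-d_i})$ together with $k_if_ik_i^{-1}=q^{-2d_i}f_i$, the commutation formula
\[
e_if_i^n-f_i^n e_i=[n]_i\,f_i^{n-1}\,
\frac{q^{(1-n)d_i}k_i-q^{(n-1)d_i}k_i^{-1}}{q^{d_i}-q^{-d_i}}\,.
\]
Applying both sides to $\one_\lambda$, using $e_i\one_\lambda=0$ and $k_i\one_\lambda=q^{d_i\langle\lambda,\alpha_i^\vee\rangle}\one_\lambda$, gives
\[
e_if_i^n\one_\lambda=[n]_i\,
\frac{q^{d_i(\langle\lambda,\alpha_i^\vee\rangle-n+1)}-q^{-d_i(\langle\lambda,\alpha_i^\vee\rangle-n+1)}}{q^{d_i}-q^{-d_i}}\;f_i^{n-1}\one_\lambda\,.
\]
The hypothesis $\langle\lambda+\rho,\alpha_i^\vee\rangle=n$ together with $\langle\rho,\alpha_i^\vee\rangle=1$ reads $\langle\lambda,\alpha_i^\vee\rangle=n-1$, so the exponent $\langle\lambda,\alpha_i^\vee\rangle-n+1$ vanishes, the scalar above is $0$, and $e_if_i^n\one_\lambda=0$, completing the verification. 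The only delicate point is the bookkeeping of the $q^{d_i}$-powers and the bracket coefficient in the commutation formula; everything else is formal, and the numerical hypothesis enters solely to force that scalar to vanish.
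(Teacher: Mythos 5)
Your proof is correct and is essentially the argument the paper has in mind: the paper disposes of Proposition~\ref{prop-first} with the remark that it ``follows from a simple $U_q\mathfrak{sl}(2)$ computation,'' and your commutation formula $e_if_i^n-f_i^ne_i=[n]_i f_i^{n-1}\bigl(q^{(1-n)d_i}k_i-q^{(n-1)d_i}k_i^{-1}\bigr)/\bigl(q^{d_i}-q^{-d_i}\bigr)$, combined with the fact (stated just before the proposition) that $K(\lambda)=K_\lambda\cdot\one_\lambda$ is the largest proper submodule of $M(\lambda)$, is exactly that computation carried out in full. The bookkeeping of the $q$-powers, the use of $\langle\rho,\alpha_i^\vee\rangle=1$, and the passage from $f_i^n\one_\lambda\in K_\lambda\one_\lambda$ to $f_i^n\in K_\lambda$ via the isomorphism $U^-\to M(\lambda)$ are all accurate.
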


\begin{proposition}\label{prop-second}
Let $\lambda\in\h^*$ be dominant integral.
Then the $\U$-module $L(\lambda)$ is finite dimensional, and
\ $\dim\bigl(L(\lambda)\bigr)[\mu]=\dim\bigl(L(\lambda)\bigr)[w\mu]$
for any $\mu\in\h^*$ and $w\in W$.
\end{proposition}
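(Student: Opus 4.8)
The plan is to follow the classical strategy: first reduce to the rank-one subalgebras and establish integrability of $L(\lambda)$, then deduce the Weyl symmetry of the weight multiplicities, and finally use that symmetry to bound the support of the character and conclude finite-dimensionality. The only nontrivial input I would need beyond standard bookkeeping is Proposition~\ref{prop-first}.

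First I would record the rank-one data. Writing $m_i=\langle\lambda,\alpha_i^\vee\rangle\in\Z_+$, dominant integrality gives $\langle\lambda+\rho,\alpha_i^\vee\rangle=m_i+1\in\N$, so Proposition~\ref{prop-first} yields $f_i^{m_i+1}\in K_\lambda$, i.e.\ $f_i^{m_i+1}\overline\one_\lambda=0$ in $L(\lambda)$. Together with $e_i\overline\one_\lambda=0$ this says that the subalgebra $U_i$ generated by $e_i,f_i,t_i^{\pm1}$ (a copy of $U_q\sl_2$) sends $\overline\one_\lambda$ into a finite-dimensional $U_i$-module. To upgrade this to full integrability, for each $i$ I would set $L(\lambda)^{[i]}=\{v:\ f_i^Nv=0\text{ for some }N\}$ and show it is a $U$-submodule, checking stability under each generator from the defining relations: $t_j$ and $f_i$ preserve it trivially; for $j\neq i$ the elements $e_j,f_j$ commute with $f_i$ up to the $q$-Serre relation $(\ad f_i)^{1-a_{ij}}f_j=0$, while for $j=i$ the quantum $\sl_2$ identity
\[
e_if_i^{N}=f_i^{N}e_i+[N]_i\,f_i^{N-1}\,
\frac{q^{-(N-1)d_i}k_i-q^{(N-1)d_i}k_i^{-1}}{q^{d_i}-q^{-d_i}}
\]
shows $e_i$ maps $L(\lambda)^{[i]}$ into itself. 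Since $\overline\one_\lambda\in L(\lambda)^{[i]}$ by the previous step, this submodule is nonzero, hence equals $L(\lambda)$ by irreducibility; so $f_i$ is locally nilpotent. The operator $e_i$ is locally nilpotent automatically, as it raises weights and all weights of $L(\lambda)$ lie in $\lambda-Q_+$. Thus each vector generates a finite-dimensional $U_i$-module and $L(\lambda)$ is $U_i$-integrable.

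Next, the multiplicity equality follows from rank-one representation theory: an integrable $U_i$-module with finite-dimensional weight spaces decomposes into finite-dimensional irreducibles, on each of which the standard $U_q\sl_2$-symmetry — realized for instance by Lusztig's operator $T_i$, or directly by the appropriately normalized power of $f_i$ acting between weight spaces of opposite $\alpha_i^\vee$-weight — gives a linear isomorphism. Assembling these over the decomposition yields $\dim L(\lambda)[\mu]=\dim L(\lambda)[s_{\alpha_i}\mu]$ for every $\mu$, and since the $s_{\alpha_i}$ generate $W$, I obtain $\dim L(\lambda)[\mu]=\dim L(\lambda)[w\mu]$ for all $w\in W$. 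Finally, for finite-dimensionality: all weights of $L(\lambda)$ lie in $\lambda-Q_+$, and by the symmetry just proved this set of weights is $W$-stable; a $W$-stable subset of $\lambda-Q_+$ consisting of lattice points is contained in the convex hull of the finite orbit $W\lambda$, hence is finite. As each weight space is a quotient of the corresponding (finite-dimensional) weight space of $M(\lambda)$, every $L(\lambda)[\mu]$ is finite-dimensional, and therefore $\dim L(\lambda)=\sum_\mu\dim L(\lambda)[\mu]<\infty$.

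The genuinely quantum point, and the step I expect to be the main obstacle, is the submodule claim in the second paragraph: the $q$-binomial bookkeeping behind the displayed identity and the use of the $q$-Serre relations to control $(\ad f_i)^{1-a_{ij}}f_j$. Once local nilpotency of $f_i$ is secured, the remaining arguments are formally identical to the Lie-algebraic case. All of the above is contained in Joseph's book, so in the paper this proposition is simply cited.
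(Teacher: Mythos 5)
Your proof is correct: reducing to rank one via Proposition~\ref{prop-first}, establishing local nilpotency of the $f_i$ through the submodule argument, deducing the $s_{\alpha_i}$-symmetry of weight multiplicities, and bounding the $W$-stable support inside $\lambda-Q_+$ is the standard integrability argument. The paper gives no proof of its own for this proposition --- it simply cites \cite{Joseph_book} --- and your reconstruction is precisely the argument that citation points to, as you yourself anticipated in your closing sentence.
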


\begin{proposition}\label{prop-third}
Assume that $\lambda\in\h^*$ satisfies
$\langle\lambda+\rho,\alpha^\vee\rangle=n\in\N$ for some $\alpha\in\Rb_+$
and $\langle\lambda+\rho,\beta^\vee\rangle\not\in\N$ for all
$\beta\in\Rb_+\setminus\{\alpha\}$. Then $K_\lambda$ is generated by
a single element of weight $-n\alpha$.
\end{proposition}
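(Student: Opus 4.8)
The assertion is equivalent to a statement about the maximal submodule of $M(\lambda)$. Under the $U^-$-module isomorphism $U^-\xrightarrow{\sim}M(\lambda)$, $y\mapsto y\one_\lambda$, the kernel $K_\lambda$ of the Shapovalov form $\S_\lambda$ corresponds to the maximal proper submodule $K(\lambda)=K_\lambda\one_\lambda$. Hence the claim becomes $K(\lambda)=U\cdot w$ for a singular vector $w$ of weight $\lambda-n\alpha=s_\alpha\cdot\lambda$: if such $w=y\one_\lambda$ exists with $y\in U^-$ of weight $-n\alpha$, then $U_+^+w=0$ and $U^0$ scales $w$, so $U\cdot w=U^-w$, which corresponds exactly to the left $U^-$-ideal $U^-y$. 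My plan is therefore to produce $w$ and to compute $\operatorname{ch}K(\lambda)$ exactly.

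The main input is the Jantzen filtration $M(\lambda)=M(\lambda)^0\supseteq M(\lambda)^1\supseteq\cdots$, whose first term is $M(\lambda)^1=K(\lambda)$, together with the Jantzen sum formula
\[
\sum_{i\ge 1}\operatorname{ch} M(\lambda)^i
=\sum_{\beta\in\Rb_+}\ \sum_{\substack{m\ge 1\\ \langle\lambda+\rho,\beta^\vee\rangle=m}}\operatorname{ch} M(\lambda-m\beta),
\]
which holds for $\Uc$ as well (it is read off from the quantum Shapovalov determinant over $\kk$). By hypothesis the inner condition $\langle\lambda+\rho,\beta^\vee\rangle=m\in\N$ holds for no $\beta\in\Rb_+\setminus\{\alpha\}$, and for $\beta=\alpha$ only for $m=n$; so the right-hand side collapses to the single term $\operatorname{ch} M(\lambda-n\alpha)$. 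As the $M(\lambda)^i$ are nested, the dimensions $\dim M(\lambda)^i[\lambda-n\alpha]$ are non-increasing in $i\ge 1$ and sum to $\dim M(\lambda-n\alpha)[\lambda-n\alpha]=1$; hence $\dim K(\lambda)[\lambda-n\alpha]=1$ and $\operatorname{ch}K(\lambda)\le\operatorname{ch}M(\lambda-n\alpha)$ coefficientwise. In particular $K(\lambda)$ is supported on weights $\le\lambda-n\alpha$, so any nonzero $w\in K(\lambda)[\lambda-n\alpha]$ satisfies $U_+^+w=0$ (its image has strictly higher, hence vanishing, weight); this is the desired singular vector, and we write $w=y\one_\lambda$.

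To see that $w$ generates all of $K(\lambda)$, note that $\one_{\lambda-n\alpha}\mapsto w$ defines a nonzero homomorphism $M(\lambda-n\alpha)\to M(\lambda)$; since $U^-$ is a domain, every nonzero homomorphism of Verma modules is injective, whence $U\cdot w\cong M(\lambda-n\alpha)$ and $\operatorname{ch}(U\cdot w)=\operatorname{ch}M(\lambda-n\alpha)$. (Equivalently, $M(s_\alpha\cdot\lambda)$ is irreducible: because $s_\alpha$ fixes $\Rb_+\setminus\{\alpha\}$ setwise and $s_\alpha\alpha=-\alpha$, one has $\langle s_\alpha\cdot\lambda+\rho,\gamma^\vee\rangle=\langle\lambda+\rho,(s_\alpha\gamma)^\vee\rangle\notin\N$ for every $\gamma\in\Rb_+$, so no singular vectors appear below the top; then the nonzero quotient $U\cdot w$ of $M(\lambda-n\alpha)$ must equal it.) Combining $U\cdot w\subseteq K(\lambda)$ with the bound from the previous paragraph,
\[
\operatorname{ch}M(\lambda-n\alpha)=\operatorname{ch}(U\cdot w)\le\operatorname{ch}K(\lambda)\le\operatorname{ch}M(\lambda-n\alpha),
\]
forcing equality throughout and $U\cdot w=K(\lambda)$. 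Translating back through $U^-\cong M(\lambda)$ gives $K_\lambda=U^-y$, generated by the single weight-$(-n\alpha)$ element $y$.

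The one genuinely nontrivial ingredient is the Jantzen sum formula for $\Uc$ (equivalently, control of the vanishing locus of the quantum Shapovalov determinant over $\kk$); this is exactly what handles a general, possibly non-simple, root $\alpha$. For simple $\alpha=\alpha_i$ one could instead take $y=f_i^{\,n}$ by Proposition~\ref{prop-first} and run the same squeeze, but for non-simple $\alpha$ the very existence of a singular vector of weight $\lambda-n\alpha$ is what the sum formula supplies. The remaining steps are formal, and the structural facts invoked are standard; cf.~\cite{Joseph_book}.
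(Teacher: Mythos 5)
The paper offers no proof of this proposition at all: it declares Propositions \ref{prop-second}--\ref{prop-fourth} to be well known and cites \cite{Joseph_book}, so there is nothing internal to compare against, and your write-up supplies exactly the kind of standard argument that citation points to. Your main line is correct: over $\kk$ the quantum Shapovalov determinant vanishes precisely on the hyperplanes $\langle\lambda+\rho,\beta^\vee\rangle=m\in\N$ (the ground field rules out root-of-unity degenerations), so the Jantzen filtration and sum formula carry over to $\Uc$; under the hypotheses the right-hand side collapses to $\operatorname{ch}M(\lambda-n\alpha)$, which simultaneously gives $\dim K(\lambda)[\lambda-n\alpha]=1$ and the support bound that forces any nonzero vector of that weight in $K(\lambda)$ to be singular; the map $M(\lambda-n\alpha)\to M(\lambda)$ is injective because $U^-$ is a domain (true for $\Uc$, by the PBW/filtration argument), and the character squeeze then forces $K(\lambda)=U\cdot w=U^-w$, i.e.\ $K_\lambda=U^-y$. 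Leaning on the quantum sum formula is legitimate here --- it is no more than what the paper itself assumes by citing \cite{Joseph_book}.

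One side remark in your proof is wrong, although it does no damage because you offer it only as an alternative to the injectivity argument: for a \emph{non-simple} positive root $\alpha$ it is false that $s_\alpha$ permutes $\Rb_+\setminus\{\alpha\}$. For example, in $\g=\sl_3$ with $\alpha=\alpha_1+\alpha_2$ one has $s_\alpha\alpha_1=-\alpha_2$. Consequently your one-line check that $\langle s_\alpha\cdot\lambda+\rho,\gamma^\vee\rangle\notin\N$ for all $\gamma\in\Rb_+$ breaks down exactly when $s_\alpha\gamma<0$: writing $\delta=-s_\alpha\gamma\in\Rb_+\setminus\{\alpha\}$, the pairing equals $-\langle\lambda+\rho,\delta^\vee\rangle$, and the hypothesis only excludes values of $\langle\lambda+\rho,\delta^\vee\rangle$ in $\N$, not in $-\N$. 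The irreducibility of $M(s_\alpha\cdot\lambda)$ is nevertheless true under the hypotheses, but it needs one more step: if $\langle\lambda+\rho,\delta^\vee\rangle=-m$ with $m\in\N$ and $s_\alpha\delta<0$, then $\langle\delta,\alpha^\vee\rangle>0$, hence $\langle\alpha,\delta^\vee\rangle>0$, and the positive root $\gamma=-s_\alpha\delta$ satisfies $\langle\lambda+\rho,\gamma^\vee\rangle=m+\langle\alpha,\delta^\vee\rangle n\in\N$ (and $\gamma\neq\alpha$, since $m+\langle\alpha,\delta^\vee\rangle n>n$), contradicting the hypothesis on $\Rb_+\setminus\{\alpha\}$. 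Since your primary route through injectivity never uses irreducibility, the proof stands; just delete or repair the parenthetical.
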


\begin{proposition}\label{prop-fourth}
Let $\lambda\in\h^*$ be dominant integral,
$\langle\lambda+\rho,\alpha^\vee_i\rangle=n_i$, $i=1\lc r$.
Then $K_\lambda$ is generated by the elements $f_i^{n_i}$, $i=1\lc r$.
\end{proposition}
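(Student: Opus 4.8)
The plan is to prove the equivalent module-theoretic statement that the maximal submodule $K(\lambda)=K_\lambda\one_\lambda$ is generated as a $U$-module by the vectors $f_i^{n_i}\one_\lambda$, $i=1\lc r$. Pulling this back along the isomorphism $U^-\to M(\lambda)$, $y\mapsto y\one_\lambda$, it says exactly that $K_\lambda$ is the left ideal of $U^-$ generated by $f_1^{n_1}\lc f_r^{n_r}$, which is the assertion of the proposition once one observes that $K(\lambda)$ being a submodule forces $K_\lambda$ to be a left ideal of $U^-$.

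First I would check that each $f_i^{n_i}\one_\lambda$ is a singular vector. Since $\lambda$ is dominant integral, $\langle\lambda,\alpha_i^\vee\rangle\in\Z_+$, so $n_i=\langle\lambda+\rho,\alpha_i^\vee\rangle=\langle\lambda,\alpha_i^\vee\rangle+1\in\N$, and Proposition~\ref{prop-first} gives $f_i^{n_i}\in K_\lambda$. Moreover $e_j f_i^{n_i}\one_\lambda=0$ for every $j$: for $j\neq i$ because $e_j$ commutes with $f_i$ and annihilates $\one_\lambda$, and for $j=i$ by the rank-one $U_q\sl_2$ computation underlying Proposition~\ref{prop-first}. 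Hence $N:=\sum_i U\bigl(f_i^{n_i}\one_\lambda\bigr)=\sum_i U^- f_i^{n_i}\one_\lambda$, and since $K(\lambda)$ is a submodule containing the $f_i^{n_i}\one_\lambda$ we obtain the easy inclusion $N\subseteq K(\lambda)$, i.e. $\sum_i U^- f_i^{n_i}\subseteq K_\lambda$.

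The substance is the reverse inclusion, for which I would show that $M'=M(\lambda)/N$ is integrable and then deduce $M'=L(\lambda)$. Set, for each $i$, $M'_i=\{m\in M' : f_i^k m=0 \text{ for some } k\geq 0\}$. The image of $\one_\lambda$ lies in every $M'_i$ because $f_i^{n_i}\one_\lambda\in N$, so it suffices to prove that each $M'_i$ is a $U$-submodule: then $M'_i=M'$ since $M'$ is generated by the image of $\one_\lambda$, and $M'$ is integrable (local nilpotence of the $e_j$ being automatic on a highest weight module). Stability of $M'_i$ under $t_j$ is clear; stability under $e_j$ and $f_j$ is checked by commuting a large power $f_i^k$ past $e_j$ and $f_j$, using $[e_j,f_i]=0$ for $j\neq i$, the expansion of $[f_j,f_i^k]$, and the quantum Serre relations, to conclude that $f_i^{k'}(e_j m)=0$ and $f_i^{k'}(f_j m)=0$ for $k'$ large whenever $f_i^k m=0$. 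This Garland--Lepowsky-type computation is the main obstacle: it is where the $q$-deformed Serre relations enter and must be handled with care.

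Finally I would conclude as follows. The module $M'$ is an admissible highest weight module with dominant integral highest weight $\lambda$, and by the previous step it is integrable; in finite type this confines the weights of $M'$ to the finite $W$-saturated set generated by $\lambda$, so $\dim_\kk M'<\infty$. Since the category of finite-dimensional $U$-modules of this type is semisimple, $M'$ is a direct sum of irreducibles; being a highest weight module it is indecomposable with unique simple quotient $L(\lambda)$, whence $M'\cong L(\lambda)$. Therefore $N=K(\lambda)$, which gives $\sum_i U^- f_i^{n_i}=K_\lambda$, as claimed.
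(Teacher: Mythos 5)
The paper gives no proof of Proposition~\ref{prop-fourth} at all: it is quoted as well known, with a reference to \cite{Joseph_book}, so your argument is necessarily a different route --- and it is the standard one, and it is sound. You correctly pass from the ideal-theoretic statement to the module-theoretic one (under $y\mapsto y\one_\lambda$, submodules of $M(\lambda)$ correspond to left ideals of $U^-$), use the rank-one computation behind Proposition~\ref{prop-first} to see that each $f_i^{n_i}\one_\lambda$ is singular, so that $N=\sum_i U^-f_i^{n_i}\one_\lambda\subseteq K(\lambda)$, and then obtain the reverse inclusion by showing that $M'=M(\lambda)/N$ is integrable, hence finite dimensional (its weights form a $W$-invariant subset of $\lambda-Q_+$, and weight spaces are finite dimensional), hence --- by complete reducibility together with the fact that a highest weight module has a unique maximal submodule --- irreducible, giving $M'=L(\lambda)$ and $N=K(\lambda)$. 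Two ingredients are used as black boxes: (i) the propagation of local nilpotence of $f_i$ under application of $f_j$, $j\neq i$, which is exactly where the quantum Serre relations enter --- the induction showing that $f_i^{N}f_j$ is a combination of monomials $f_i^{a}f_jf_i^{b}$ with $a\le -a_{ij}$, hence $b\ge N+a_{ij}$, so that $f_i^{N}f_jm=0$ once $f_i^{k}m=0$ and $N\ge k-a_{ij}$ --- which you flag as the main obstacle but do not carry out; and (ii) semisimplicity of finite-dimensional weight modules over $\Uc$ at generic $q$. Both are standard, and neither depends on Proposition~\ref{prop-fourth}, so there is no circularity; writing out (i) along the lines just indicated would make your proof self-contained. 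Compared with the paper's bare citation, your route has the virtue of being elementary and uniform in $q$; its cost is precisely these two imported facts, which is presumably why the authors defer to \cite{Joseph_book}.
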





In the sequel we need some properties of the universal $R$-matrix of $\U$. Namely, let $V_1$, $V_2$
be $\U$-modules such that $V_1$ is a direct sum of highest weight modules
or $V_2$ is a direct sum of lowest weight modules.
Then the $R$-matrix induces an isomorphism
$\check{R}:V_1\otimes V_2\to V_2\otimes V_1$ of $\U$-modules.
Moreover, if $V_1$ is a highest weight module
with highest weight $\lambda$ and highest weight vector $\one_\lambda$, and
$V_2$ is a lowest weight module with lowest weight $\mu$ and lowest weight
vector $\widetilde{\one}_\mu$, then
$\check{R}(\one_\lambda\otimes\widetilde{\one}_\mu)=
q^{-(\lambda|\mu)}\widetilde{\one}_\mu\otimes\one_\lambda$.

Let $F=\kk[G]_q$ be the quantized algebra of regular functions on a
connected simply connected algebraic group $G$ that corresponds
to the Cartan matrix $(a_{ij})$ (see \cite{Joseph_book, KS_alg_func}). We
can consider $F$ as a Hopf subalgebra in the dual Hopf algebra $U^\star$.
We will use the left and right regular actions of $\U$ on $F$ defined
respectively by the formulae $(\overrightarrow{a}f)(x)=f(xa)$ and
$(f\overleftarrow{a})(x)=f(ax)$. Notice that $F$ is a sum of
finite-dimensional admissible $\U$-modules with respect to both 
regular actions of $\U$ (see \cite{KS_alg_func}).

\section{Star products and fusion elements}\label{Sect-main}

\subsection{Algebra of intertwining operators}\label{Subsect-main-general}


Let us denote by $U_{\fin}\subset U$ the subalgebra of locally finite
elements with respect to the right adjoint action of $\U$ on itself. We will
use similar notation for any (right) $\U$-module.

For any (left) $\U$-module $M$
we equip $F$ with the left regular $\U$-action and consider
the space $\Hom_U(M,M\otimes F)$. For any $\varphi,\psi\in\Hom_U(M,M\otimes
F)$ define
\begin{equation}\label{eqn-basic-star-prod}
\varphi \ast \psi =(\id\otimes \mu)\circ (\varphi \otimes \id)\circ \psi,
\end{equation}
where $\mu$ is the multiplication in $F$. We have
$\varphi*\psi\in\Hom_U(M,M\otimes F)$, and this definition equips
$\Hom_U(M,M\otimes F)$ with a unital associative algebra structure.

Consider the map $\Phi:\Hom_U(M,M\otimes F)\to\End M$, $\varphi\mapsto
u_\varphi$, defined by $u_\varphi(m)=(\id\otimes\varepsilon)(\varphi(m))$;
here $\varepsilon(f)=f(1)$ is the counit in $F$. Consider $U_{\fin}$,
$\Hom_U(M,M\otimes F)$ and $\End M$ as right $\U$-module algebras:
$U_{\fin}$ via right adjoint action, $\Hom_U(M,M\otimes F)$ via right
regular action on $F$ (i.e., $(\varphi\cdot a)(m)=(\id\otimes
\overleftarrow{a})(\varphi(m))$), and $\End M$ in a standard way (i.e.,
$u\cdot a=\sum_{(a)}\sigma(a_{(1)})_Mu{a_{(2)}}_M$). Then $\Hom_U(M,M\otimes
F)_{\fin}=\Hom_U(M,M\otimes F)$, and $\Phi: \Hom_U (M, M\otimes
F)\longrightarrow(\End M)_{\fin}$ is an isomorphism of right $\U$-module
algebras (see \cite[Proposition 6]{KST}).

Now we apply this to $M=M(\lambda)$ and $M=L(\lambda)$. Since
$U_{\fin}\to(\End M(\lambda))_{\fin}$ is surjective (see \cite{Joseph_book,
Joseph_Letzter_Verma}), we have the following commutative diagram
\[
\xymatrix{\Hom_U (M(\lambda), M(\lambda)\otimes F)
\ar[d]^{\Phi_{M(\lambda)}} \ar[r] & \Hom_U
(L(\lambda), L(\lambda)\otimes F) \ar[d]^{\Phi_{L(\lambda)}} \\
(\End M(\lambda))_{\fin} \ar[r] & (\End L(\lambda))_{\fin}}
\]
(see \cite[Proposition 9]{KST}).

For any
$\varphi\in\Hom_U(L(\lambda),L(\lambda)\otimes F)$ the formula
$\varphi(\overline\one_\lambda) =
\overline{\one}_\lambda \otimes f_\varphi + \sum_{\mu<\lambda} v_\mu \otimes
f_\mu$, where $v_\mu$ is of weight $\mu$,
defines a map $\Theta:\Hom_U(L(\lambda),L(\lambda)\otimes
F)\to F[0]$, $\varphi\mapsto f_\varphi$.

\begin{theorem}\label{thm-theta}
$\Theta$ is an embedding, and its image equals $F[0]^{K_\lambda+\widetilde{K}_\lambda}$.
\end{theorem}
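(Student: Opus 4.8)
The plan is to analyze the map $\Theta$ by tracking how the highest-weight component $f_\varphi$ encodes the whole intertwiner $\varphi$, and then to characterize exactly which $f\in F[0]$ can arise. I would split the statement into three claims: (i) $\Theta$ is injective; (ii) every $f_\varphi$ lies in $F[0]^{K_\lambda+\widetilde{K}_\lambda}$; and (iii) every element of $F[0]^{K_\lambda+\widetilde{K}_\lambda}$ is realized.

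\emph{Injectivity.} The key observation is that $\varphi$ is a $U$-module map out of the \emph{highest weight} module $L(\lambda)$, hence is completely determined by $\varphi(\overline\one_\lambda)$, since $\overline\one_\lambda$ generates $L(\lambda)$ as a $U^-$-module (equivalently as a $U$-module). So it suffices to recover $\varphi(\overline\one_\lambda)$ from its leading term $\overline\one_\lambda\otimes f_\varphi$. I would argue that the lower terms $\sum_{\mu<\lambda}v_\mu\otimes f_\mu$ are forced: applying the intertwining condition $\varphi(y\,\overline\one_\lambda)=\Delta(y)\varphi(\overline\one_\lambda)$ for $y\in U^+$ and using $U^+\overline\one_\lambda=0$ gives a system of relations that expresses the $f_\mu$ in terms of $f_\varphi$ via the coproduct and the left regular action. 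Concretely, the condition that $\varphi(\overline\one_\lambda)$ is annihilated by $\Delta(e_i)$ recursively pins down the lower-weight data once $f_\varphi$ is fixed; I expect this to show $f_\varphi=0\Rightarrow\varphi=0$.

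\emph{Image lands in the invariants.} Here I would compute the two conditions separately. The $U^0$-invariance ($f_\varphi\in F[0]$) is built into the definition, coming from matching weights in the leading term. For annihilation by $\widetilde K_\lambda=\theta(K_\lambda)\subset U^+$: applying $x\in U^+$ via the intertwining relation and projecting onto the $\overline\one_\lambda$-component, the image condition should read off as the requirement that $\ovr{x}\,f_\varphi$ (right regular, or its $\omega$-twist) vanishes precisely when $x$ pairs trivially under $\pi_\lambda$, i.e.\ exactly the defining condition of $\widetilde K_\lambda$. Symmetrically, annihilation by $K_\lambda\subset U^-$ should come from the \emph{opposite} Verma picture $\widetilde M(-\lambda)$, using the lowest-weight generator: the same intertwiner $\varphi$, read through $\Hom_U\!\bigl(\widetilde M(-\lambda),\widetilde M(-\lambda)\otimes F\bigr)$, produces the $K_\lambda$-annihilation condition. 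The Shapovalov pairing $\S_\lambda(x\otimes y)=q^\lambda((\omega(x)y)_0)$ and its kernel description of $K_\lambda$ are what make these two conditions match up cleanly.

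\emph{Surjectivity onto the invariants.} Given $f\in F[0]^{K_\lambda+\widetilde K_\lambda}$, I would construct $\varphi$ by first building an intertwiner at the Verma level. Using the commutative square relating $\Hom_U(M(\lambda),M(\lambda)\otimes F)$ to $\Hom_U(L(\lambda),L(\lambda)\otimes F)$ together with $\Phi_{M(\lambda)}$ and the surjectivity of $U_{\fin}\to(\End M(\lambda))_{\fin}$, I can lift $f$ to an intertwiner $\tilde\varphi$ on $M(\lambda)$ whose leading coefficient is $f$; the two annihilation conditions on $f$ are exactly what guarantee that $\tilde\varphi$ descends through $K(\lambda)=K_\lambda\overline\one_\lambda$ to a well-defined $\varphi$ on $L(\lambda)$. \textbf{The main obstacle} I anticipate is precisely this descent/factorization: showing that the $K_\lambda$- and $\widetilde K_\lambda$-conditions are not merely necessary but \emph{sufficient} for the Verma-level intertwiner to preserve the maximal submodule, so that it passes to the irreducible quotient without losing or gaining degrees of freedom. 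This is where the interplay between the left regular action, the antipode, and the Shapovalov kernel must be controlled carefully, and where I would expect to invoke the explicit pairing $\pi_\lambda$ rather than soft arguments.
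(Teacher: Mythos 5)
Your parts (i) and (ii) are essentially sound, though both need one input made explicit. The recursion proving injectivity works only because $L(\lambda)$ is irreducible: it has no singular vectors below the highest weight, so the joint action of the $e_i$'s on each $L(\lambda)[\lambda-\beta]$, $\beta\neq 0$, has zero kernel, and this is what lets $\Delta(e_i)\varphi(\overline\one_\lambda)=0$ propagate $f_\varphi=0$ downward. (The paper packages the same input differently: Proposition~\ref{prop-Theta-inv} inverts $\Theta$ explicitly using the nondegeneracy of $\overline{\S}_\lambda$ on $L(\lambda)$.) Necessity of the two annihilation conditions is also fine; for the $\widetilde{K}_\lambda$-condition your computation can be completed exactly along the lines of the proof of Proposition~\ref{prop-Theta-inv}, and for the $K_\lambda$-condition there is a shortcut that avoids the opposite Verma module entirely: $y\overline\one_\lambda=0$ for $y\in K_\lambda$ forces $\Delta(y)\varphi(\overline\one_\lambda)=0$, and since $\Delta(y)-1\otimes y$ has first tensor legs of strictly negative weight, the top component of this identity is precisely $\overline\one_\lambda\otimes\ovr{y}f_\varphi=0$.

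The genuine gap is in part (iii), and it is fatal rather than technical. Your plan — lift $f\in F[0]^{K_\lambda+\widetilde{K}_\lambda}$ to some $\tilde\varphi\in\Hom_U(M(\lambda),M(\lambda)\otimes F)$ with leading coefficient $f$, then descend to $L(\lambda)$ — would, combined with your parts (i) and (ii), prove that the natural map $\Hom_U(M(\lambda),M(\lambda)\otimes F)\to\Hom_U(L(\lambda),L(\lambda)\otimes F)$ is surjective for \emph{every} $\lambda$. But in the commutative diagram of Section~\ref{Subsect-main-general} both vertical maps $\Phi$ are isomorphisms \cite{KST}, and $U_{\fin}\to(\End M(\lambda))_{\fin}$ is surjective \cite{Joseph_Letzter_Verma}; hence surjectivity of that natural map is \emph{equivalent} to surjectivity of the action map $U_{\fin}\to(\End L(\lambda))_{\fin}$, i.e.\ to the affirmative answer to Kostant's problem for $\lambda$. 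That is exactly what Theorem~\ref{thm-theta} must not presuppose: the theorem holds for all $\lambda$, whereas Kostant's problem is open in general and has a negative answer for certain $\lambda$, already in type $A$ at $q=1$ \cite{M-S2} — and your argument would apply verbatim at $q=1$, where $U_{\fin}=U(\g)$. So the lift you assert ``can'' be produced does not exist in general; indeed, the paper itself only recovers such surjectivity later, for $J$-regular weights (Proposition~\ref{prop-surj-1-general}). Note also that you locate the main difficulty in the wrong step: descent through $K(\lambda)$ is automatic, because by Joseph--Letzter every Verma-level intertwiner corresponds under $\Phi_{M(\lambda)}$ to the action of an element of $U_{\fin}$, and module actions preserve submodules. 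It is the existence of the lift, which you treat as routine, that fails.

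What your outline is missing is the mechanism the paper uses to get surjectivity without any lifting: the auxiliary target $\widetilde{M}(-\lambda)^*$. One first flips tensor factors with the $R$-matrix (Proposition~\ref{prop-v}) and then embeds $\Hom_U(L(\lambda),V\otimes L(\lambda))$ into $\Hom_U(M(\lambda),V\otimes\widetilde{M}(-\lambda)^*)$; the crucial point (Proposition~\ref{prop-zeta-r-isomorphisms}) is that this last space is identified with \emph{all} of $V[0]$, with no conditions, since $\one_\lambda$ may be sent to an arbitrary singular vector of weight $\lambda$ and such vectors correspond freely to $V[0]$. Surjectivity of $\Theta$ then amounts to characterizing which elements of $V[0]$ yield morphisms factoring through $L(\lambda)\hookrightarrow\widetilde{M}(-\lambda)^*$ (Lemma~\ref{lem-image-chi-lambda-pr}, Propositions~\ref{prop-tilde-K-lambda-0} and~\ref{prop-K-lambda-0}); the hard half of Proposition~\ref{prop-K-lambda-0} is handled by the $\check{R}$-flip of $M(\lambda)\otimes\widetilde{M}(-\lambda)$, which converts ``kills a submodule of the source'' into ``lands in a submodule of the target'', a condition checkable on the generator alone. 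Your passing appeal to the opposite Verma picture gestures toward this, but without the dual-target, free-parametrization step it cannot produce the required intertwiners.
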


To prove Theorem \ref{thm-theta} we need some preparations.

In the sequel $V$ stands for an $\U$-module which is a direct sum
of finite dimensional admissible $\U$-modules.

For an admissible $\U$-module $M$
we will denote by $M^*$ its restricted
dual.

Let $\widetilde{M}(\lambda)$ be the ``opposite Verma module'' with
the lowest weight $\lambda\in\h^*$ and the lowest weight vector
$\widetilde{\one}_\lambda$. It is clear that
$\widetilde{K}_{-\lambda}\cdot\widetilde{\one}_\lambda$ is
the largest proper submodule in $\widetilde{M}(\lambda)$.

\begin{lemma}\label{lem-n+-inv}
$\Hom_{U^-}(M(\lambda),V)=(V\otimes M(\lambda)^*)^{U^-}$,
$\Hom_{U^+}(\widetilde{M}(\lambda),V)=(V\otimes\widetilde{M}(\lambda)^*)^{U^+}$.
\end{lemma}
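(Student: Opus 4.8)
The plan is to realize both sides as the image and the invariants of one natural injective intertwiner. I treat the first identity in detail; the second follows verbatim upon interchanging $U^-\leftrightarrow U^+$, $f_i\leftrightarrow e_i$, and replacing $M(\lambda)=U^-\one_\lambda$ (which is free of rank one over $U^-$) by $\widetilde M(\lambda)=U^+\widetilde\one_\lambda$ (free of rank one over $U^+$).

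First I introduce the tautological map $\iota\colon V\otimes M(\lambda)^*\to\Hom_\kk(M(\lambda),V)$ sending $\sum_j v_j\otimes\xi_j$ to the operator $m\mapsto\sum_j\xi_j(m)v_j$. Give $M(\lambda)^*$ the left $\U$-action $(a\xi)(m)=\xi(\sigma(a)m)$, equip $V\otimes M(\lambda)^*$ with the tensor-product action through $\Delta$, and $\Hom_\kk(M(\lambda),V)$ with the standard action $(a\varphi)(m)=\sum_{(a)}a_{(1)}\varphi(\sigma(a_{(2)})m)$. A one-line Sweedler computation then shows $\iota$ is a homomorphism of $\U$-modules, and $\iota$ is injective since in any expression one may take the $v_j$ linearly independent. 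In particular $\iota$ is $U^-$-equivariant.

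Next I identify the $U^-$-invariants inside $\Hom_\kk(M(\lambda),V)$ with the $U^-$-linear maps. From $\Delta(f_i)=f_i\otimes k_i^{-1}+1\otimes f_i$, $\sigma(f_i)=-f_ik_i$ and $\sigma(k_i^{-1})=k_i$ one computes $(f_i\varphi)(m)=f_i\varphi(k_im)-\varphi(f_ik_im)$; since $k_i$ acts invertibly on the weight module $M(\lambda)$, the equation $f_i\varphi=0$ is equivalent to $\varphi(f_im)=f_i\varphi(m)$ for all $m$. As $U^-$ is generated by the $f_i$ and its augmentation ideal is spanned by monomials in them, annihilation by every $f_i$ is the same as $U^-$-invariance, while commutation with every $f_i$ is the same as $U^-$-linearity; hence $\Hom_\kk(M(\lambda),V)^{U^-}=\Hom_{U^-}(M(\lambda),V)$. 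Because $\iota$ is an injective $U^-$-map, it carries $(V\otimes M(\lambda)^*)^{U^-}$ isomorphically onto $\Image\iota\cap\Hom_{U^-}(M(\lambda),V)$.

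It remains to prove the reverse containment $\Hom_{U^-}(M(\lambda),V)\subseteq\Image\iota$, and this is the step I expect to be the main obstacle: a priori an element of $\Hom_{U^-}(M(\lambda),V)$ (determined freely by the image of $\one_\lambda$) could be an operator of infinite rank, whereas everything in $\Image\iota$ is a finite sum, hence finite-rank and built from functionals lying in the restricted dual. The hypothesis that $V$ is a direct sum of finite-dimensional admissible modules is exactly what removes this obstruction: given $U^-$-linear $\varphi$, the vector $\varphi(\one_\lambda)$ lies in a finite-dimensional $\U$-submodule $V_0\subseteq V$, so $\varphi(M(\lambda))=U^-\varphi(\one_\lambda)\subseteq V_0$ is finite-dimensional. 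Writing $\varphi(\one_\lambda)$ as a finite sum of weight vectors and using $\varphi(M(\lambda)[\lambda-\beta])=U^-[-\beta]\,\varphi(\one_\lambda)$, one sees that a nonzero contribution can occur only for the finitely many $\beta\in Q_+$ for which $V_0$ has a matching weight; thus $\varphi$ vanishes on $M(\lambda)[\lambda-\beta]$ for all but finitely many $\beta$. Therefore $\varphi$ is supported on finitely many finite-dimensional weight spaces of $M(\lambda)$, so it is represented by finitely many functionals belonging to the restricted dual $M(\lambda)^*$, i.e.\ $\varphi\in\Image\iota$. Combined with the previous paragraph this gives $\iota\big((V\otimes M(\lambda)^*)^{U^-}\big)=\Hom_{U^-}(M(\lambda),V)$, which is the first assertion; the second is identical with $U^+$ in place of $U^-$.
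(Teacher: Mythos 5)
Your treatment of the first identity is correct and is essentially the paper's own argument written out in full: the one substantive step --- a $U^-$-linear map $\varphi$ on $M(\lambda)$ has image inside the finite-dimensional submodule generated by $\varphi(\one_\lambda)$, hence vanishes on all but finitely many weight spaces of $M(\lambda)$ and therefore comes from the restricted dual --- is identical to the paper's, and your verification that $f_i$-annihilation equals $f_i$-commutation, via $(f_i\varphi)(m)=f_i\varphi(k_im)-\varphi(f_ik_im)$ and the invertibility of $k_i$, fills in a point the paper leaves implicit.

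The gap is the claim that the second identity follows \emph{verbatim} under $f_i\leftrightarrow e_i$. The interchange $\theta$ is an algebra automorphism of $\U$ but not a coalgebra one: $\Delta(f_i)=f_i\otimes k_i^{-1}+1\otimes f_i$ has $1$ in the first slot of the term whose second slot is $f_i$, whereas $\Delta(e_i)=e_i\otimes 1+k_i\otimes e_i$ has $k_i$ there, and likewise $\sigma(f_i)=-f_ik_i$ but $\sigma(e_i)=-k_i^{-1}e_i$. Running your computation with your stated conventions for $\varphi\in\Hom_\kk(\widetilde{M}(\lambda),V)$ gives $(e_i\varphi)(m)=e_i\varphi(m)-k_i\varphi(k_i^{-1}e_im)$, and $e_i\varphi=0$ is equivalent, on the component of $\varphi$ shifting weights by $\gamma$, to the twisted relation $\varphi(e_im)=q^{-(\gamma|\alpha_i)}\,e_i\varphi(m)$, not to commutation with $e_i$. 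The discrepancy is real, not notational: $\widetilde{M}(\lambda)$ is free of rank one over $U^+$, so every weight vector $v\in V[\mu]$ defines a $U^+$-linear $\varphi$ with $\varphi(\widetilde{\one}_\lambda)=v$, and for it $(e_i\varphi)(\widetilde{\one}_\lambda)=\bigl(1-q^{(\mu-\lambda|\alpha_i)}\bigr)e_iv$, which is nonzero whenever $e_iv\neq0$ and $(\mu-\lambda|\alpha_i)\neq0$; thus under your identification $U^+$-linear maps are in general not $U^+$-invariant, and conversely. The lemma itself is not in danger: the two subspaces of $\Hom_\kk(\widetilde{M}(\lambda),V)$ are matched by the invertible rescaling that multiplies a weight-shift-$\gamma$ map by $q^{(\gamma|\nu)}$ on $\widetilde{M}(\lambda)[\nu]$. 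But to obtain it by your method you must mirror the conventions in the second case, e.g.\ realize $\Hom_\kk(\widetilde{M}(\lambda),V)$ as $\widetilde{M}(\lambda)^*\otimes V$ with the action $(a\varphi)(m)=\sum_{(a)}a_{(2)}\varphi(\sigma^{-1}(a_{(1)})m)$, for which the same one-line computation does yield the equivalence of $e_i$-annihilation with $e_i$-commutation; the finite-dimensionality argument then applies unchanged. The paper's ``can be proved similarly'' elides the same point, but since your proof commits to explicit conventions, it needs this correction.
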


\begin{proof}
For any $\varphi\in\Hom_{U^-}(M(\lambda),V)$ the image of
$\varphi$ is equal to the finite-dimensional $U^-$-submodule
$U^-\varphi(\one_\lambda)$. Therefore for any $x\in
U^-$ such that $x\one_\lambda$ is a weight vector whose
weight is large enough we have
$\varphi(x{\one}_\lambda)=x\varphi(\one_\lambda)=0$. Thus $\varphi$
corresponds to an element in $(V\otimes M(\lambda)^*)^{U^-}$.

The second part of the lemma can be proved similarly.
\end{proof}

Choose vectors $\one_{\lambda}^*\in M(\lambda)^*[-\lambda]$
and $\widetilde{\one}_{-\lambda}^*\in
\widetilde{M}(-\lambda)^*[\lambda]$ such that
$\langle\one_{\lambda}^*,\one_{\lambda}\rangle=
\langle\widetilde{\one}_{-\lambda}^*,\widetilde{\one}_{-\lambda}\rangle=1$.
Define maps
$\zeta:\Hom_U(M(\lambda),V\otimes\widetilde{M}(-\lambda)^*)\to
V[0]$ and
$\widetilde{\zeta}:\Hom_U(\widetilde{M}(-\lambda),V\otimes M(\lambda)^*)\to V[0]$ by the formulae
$\varphi(\one_{\lambda})=\zeta_\varphi\otimes\widetilde{\one}_{-\lambda}^*\,+$
lower order terms,
$\varphi(\widetilde{\one}_{-\lambda})=
\widetilde{\zeta}_\varphi\otimes\one_{\lambda}^*\,+$ higher
order terms.

Consider also the natural maps
\begin{gather*}
r:\Hom_U(M(\lambda)\otimes
\widetilde{M}(-\lambda),V)\to\Hom_U(M(\lambda),V\otimes
\widetilde{M}(-\lambda)^*),\\
\widetilde{r}:\Hom_U(M(\lambda)\otimes
\widetilde{M}(-\lambda),V)\to\Hom_U(\widetilde{M}(-\lambda),V\otimes M(\lambda)^*).
\end{gather*}

\begin{proposition}\label{prop-zeta-r-isomorphisms}
Maps $\zeta$, $\widetilde{\zeta}$, $r$, and $\widetilde{r}$ are
vector space isomorphisms, and the diagram
\[
\xymatrix{ \Hom_U(M(\lambda)\otimes\widetilde{M}(-\lambda),V)
\ar[d]^{\check{R}^{-1}} \ar[r]^{r} &
\Hom_U(M(\lambda),V\otimes\widetilde{M}(-\lambda)^*) \ar[r]^{\hskip
2.1cm\zeta} &
V[0] \ar[d]^{q^{-(\lambda|\lambda)}}\\
\Hom_U(\widetilde{M}(-\lambda)\otimes M(\lambda),V) \ar[r]^{\widetilde{r}} &
\Hom_U(\widetilde{M}(-\lambda),V\otimes M(\lambda)^*)\ar[r]^{\hskip
2.1cm\widetilde{\zeta}} & V[0]}
\]
is commutative.
\end{proposition}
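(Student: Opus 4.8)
The plan is to reduce the whole statement to two structural inputs, a tensor--Hom adjunction (handling $r$ and $\widetilde r$) and an identification of the tensor products of Vermas with an induced module (handling the composites $\zeta\circ r$ and $\widetilde\zeta\circ\widetilde r$); commutativity of the square is then a one--line computation with the $R$-matrix. Throughout I use that $V$ is a direct sum of finite-dimensional $U$-submodules, and the universal property $\Hom_U(M(\lambda),W)\cong W[\lambda]^{U^+}$.

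First I would show that $r$ and $\widetilde r$ are the tensor--Hom adjunction isomorphisms. Given $\Psi\in\Hom_U(M(\lambda)\otimes\widetilde{M}(-\lambda),V)$, the map $r(\Psi)$ sends $m$ to the functional $b\mapsto\Psi(m\otimes b)$, regarded as an element of $V\otimes\widetilde{M}(-\lambda)^*$; the inverse is uncurrying along the $U$-equivariant evaluation $V\otimes\widetilde{M}(-\lambda)^*\otimes\widetilde{M}(-\lambda)\to V$. The only genuine point is that $r(\Psi)(m)$ lands in the restricted tensor product $V\otimes\widetilde{M}(-\lambda)^*$, i.e.\ is a finite sum, rather than merely in $\Hom_\kk(\widetilde{M}(-\lambda),V)$. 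For $m=\one_\lambda$ this holds because $\Psi(\one_\lambda\otimes-)$ intertwines the $U^+$-actions up to a $k_i$-twist, so its image is contained in $U^+\cdot\Psi(\one_\lambda\otimes\widetilde\one_{-\lambda})$; since $\Psi(\one_\lambda\otimes\widetilde\one_{-\lambda})$ lies in finitely many of the finite-dimensional summands of $V$, this image is finite dimensional, hence meets only finitely many weight spaces of $\widetilde{M}(-\lambda)$. For general $m$ it follows since $M(\lambda)=U\one_\lambda$ and $V\otimes\widetilde{M}(-\lambda)^*$ is a $U$-submodule of $\Hom_\kk(\widetilde{M}(-\lambda),V)$. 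The same argument gives $\widetilde r$.

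Next I would unwind the composites. Pairing the $\widetilde{M}(-\lambda)^*$-component of $r(\Psi)(\one_\lambda)$ against $\widetilde\one_{-\lambda}$ annihilates all lower-order terms and returns the leading coefficient, so $\zeta\circ r(\Psi)=\Psi(\one_\lambda\otimes\widetilde\one_{-\lambda})$; symmetrically $\widetilde\zeta\circ\widetilde r(\Psi')=\Psi'(\widetilde\one_{-\lambda}\otimes\one_\lambda)$. It thus suffices to prove that evaluation at the weight-$0$ vector $\one_\lambda\otimes\widetilde\one_{-\lambda}$ (resp.\ $\widetilde\one_{-\lambda}\otimes\one_\lambda$) is an isomorphism onto $V[0]$. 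For this I would establish $M(\lambda)\otimes\widetilde{M}(-\lambda)\cong U\otimes_{U^0}\kk_0$, where $\kk_0$ is the weight-$0$ character of $U^0$ and the cyclic generator $1\otimes 1$ corresponds to $\one_\lambda\otimes\widetilde\one_{-\lambda}$. Since $M(\lambda)$ and $\widetilde{M}(-\lambda)$ are induced from the two opposite Borels $U^0U^+$ and $U^0U^-$, this follows from the projection formula (tensor identity) for induction along these Hopf-subalgebra inclusions together with transitivity of induction. Frobenius reciprocity then yields $\Hom_U(U\otimes_{U^0}\kk_0,V)\cong\Hom_{U^0}(\kk_0,V)=V[0]$, and this isomorphism is precisely evaluation at the generator, i.e.\ the composite above. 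The symmetric identification handles $\widetilde\one_{-\lambda}\otimes\one_\lambda$. Combining with the adjunction, $\zeta=(\zeta\circ r)\circ r^{-1}$ and $\widetilde\zeta=(\widetilde\zeta\circ\widetilde r)\circ\widetilde r^{-1}$ are isomorphisms, so all four maps are.

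Finally, commutativity: the left vertical map is precomposition with $\check{R}^{-1}$, so the lower path sends $\Psi$ to $\widetilde\zeta\circ\widetilde r(\Psi\circ\check{R}^{-1})=\Psi\bigl(\check{R}^{-1}(\widetilde\one_{-\lambda}\otimes\one_\lambda)\bigr)$, while the upper path followed by the right vertical map sends $\Psi$ to $q^{-(\lambda|\lambda)}\Psi(\one_\lambda\otimes\widetilde\one_{-\lambda})$. These agree because the $R$-matrix formula at lowest weight $\mu=-\lambda$ gives $\check{R}(\one_\lambda\otimes\widetilde\one_{-\lambda})=q^{(\lambda|\lambda)}\widetilde\one_{-\lambda}\otimes\one_\lambda$, hence $\check{R}^{-1}(\widetilde\one_{-\lambda}\otimes\one_\lambda)=q^{-(\lambda|\lambda)}\one_\lambda\otimes\widetilde\one_{-\lambda}$. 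The main obstacle is the induced-module identification, equivalently the surjectivity of $\zeta$ (injectivity being just cyclicity of $\one_\lambda\otimes\widetilde\one_{-\lambda}$, which one can also see by a direct PBW induction): passing through $U\otimes_{U^0}\kk_0$ is the cleanest way to obtain this uniformly in $\lambda$, and verifying the precise form of the tensor identity for this non-cocommutative Hopf algebra is the one place that requires care.
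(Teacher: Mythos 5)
Your proof is correct, and it reaches the key isomorphism by a genuinely different route than the paper. The paper never identifies the module $M(\lambda)\otimes\widetilde{M}(-\lambda)$ itself; it works entirely on the Hom side, chaining
\[
\Hom_U(M(\lambda)\otimes\widetilde{M}(-\lambda),V)=\Hom_U\bigl(M(\lambda),\Hom(\widetilde{M}(-\lambda),V)\bigr)
=\Hom_{U^+}(\widetilde{M}(-\lambda),V)[\lambda]=V[0],
\]
i.e.\ currying followed by two one-variable universal properties ($U$-maps out of $M(\lambda)$ are weight-$\lambda$ singular vectors, and $\widetilde{M}(-\lambda)$ is free over $U^+$ on its lowest weight vector), with Lemma~\ref{lem-n+-inv} supplying the finiteness point that equivariant maps factor through the restricted dual; the maps $r$ and $\zeta$ are then read off as these identifications. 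You instead prove the stronger module-level fact $M(\lambda)\otimes\widetilde{M}(-\lambda)\cong U\otimes_{U^0}\kk_0$ (tensor identity applied twice, plus transitivity of induction) and obtain bijectivity of $\zeta\circ r$ in one stroke from Frobenius reciprocity. Your restricted-dual argument for $r$ (twisted $U^+$-equivariance of $\Psi(\one_\lambda\otimes-)$, hence finite-dimensional image, hence only finitely many weight spaces survive) is essentially the paper's proof of Lemma~\ref{lem-n+-inv} rederived, and your verification of the square via $\check{R}^{-1}(\widetilde\one_{-\lambda}\otimes\one_\lambda)=q^{-(\lambda|\lambda)}\one_\lambda\otimes\widetilde\one_{-\lambda}$ is identical to the paper's. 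As for what each approach buys: the paper's chain uses only standard Verma-module properties and needs no Hopf-theoretic machinery beyond the Hom-module structure; your induced-module identification is a cleaner and stronger statement, uniform in $\lambda$ and of independent interest, which makes surjectivity of $\zeta$ transparent, but it obliges you to check the tensor identity for the non-cocommutative Hopf algebra $\Uc$ and the restriction isomorphism $\widetilde{M}(-\lambda)|_{U^0U^+}\cong(U^0U^+)\otimes_{U^0}\kk_{-\lambda}$ (the triangular decomposition) --- both standard, and correctly flagged by you as the place requiring care, but precisely the inputs the paper's more pedestrian route avoids.
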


\begin{proof}
First of all notice that we have the natural
identification
\begin{gather*}
\Hom_U(M(\lambda),V\otimes\widetilde{M}(-\lambda)^*)=(V\otimes\widetilde{M}(-\lambda)^*)^{U^+}[\lambda],
\end{gather*}
Further on, we have
\begin{gather*}
\Hom_U(M(\lambda)\otimes
\widetilde{M}(-\lambda),V)=\Hom_U(M(\lambda),\Hom(\widetilde{M}(-\lambda),V))=\\
\Hom_{U^+}(\widetilde{M}(-\lambda),V)[\lambda]=V[0].
\end{gather*}
On the other side,
$\Hom_{U^+}(\widetilde{M}(-\lambda),V)=(V\otimes\widetilde{M}(-\lambda)^*)^{U^+}$
by Lemma \ref{lem-n+-inv}. Now it is clear that the map $r$ (resp.\ $\zeta$) corresponds to the
identification
$\Hom_U(M(\lambda)\otimes\widetilde{M}(-\lambda),V)=(V\otimes\widetilde{M}(-\lambda)^*)^{U^+}[\lambda]$
(resp.\ $(V\otimes\widetilde{M}(-\lambda)^*)^{U^+}[\lambda]=V[0]$).

The second part of the proposition concerning $\widetilde{r}$ and
$\widetilde{\zeta}$ can be verified similarly.

Finally, since
$\check{R}^{-1}(\widetilde{\one}_{-\lambda}\otimes\one_\lambda)=
q^{-(\lambda|\lambda)}\one_\lambda\otimes\widetilde{\one}_{-\lambda}$,
the whole diagram is commutative.
\end{proof}

Now note that the pairing $\pi_\lambda:U^+\otimes U^-\to\kk$ naturally defines a pairing
$\widetilde{M}(-\lambda)\otimes M(\lambda)\to\kk$. Denote by
$\chi_\lambda:M(\lambda)\to\widetilde{M}(-\lambda)^*$ the
corresponding morphism of $\U$-modules. The kernel of
$\chi_\lambda$ is equal to
$K(\lambda)=K_\lambda\cdot\mathbf1_\lambda$, and the image of
$\chi_\lambda$ is
$(\widetilde{K}_\lambda\cdot\widetilde\one_{-\lambda})^\perp\simeq L(\lambda)$.
Therefore $\chi_\lambda$ can be naturally represented as
$\chi_\lambda''\circ\chi_\lambda'$, where
\[
M(\lambda)\stackrel{\chi_\lambda'}{\longrightarrow}L(\lambda)
\stackrel{\chi_\lambda''}{\longrightarrow}\widetilde{M}(-\lambda)^*.
\]

The morphisms $\chi_\lambda'$ and $\chi_\lambda''$ induce the
commutative diagram of embeddings
\[
\xymatrix{\Hom_U(L(\lambda),V\otimes L(\lambda)) \ar[d] \ar[r] &
\Hom_U(M(\lambda),V\otimes L(\lambda)) \ar[d]\\
\Hom_U(L(\lambda),V\otimes \widetilde{M}(-\lambda)^*) \ar[r] &
\Hom_U(M(\lambda),V\otimes \widetilde{M}(-\lambda)^*).}
\]
It is clear that the following lemma holds:

\begin{lemma}\label{lem-image-chi-lambda-pr}
The image of $\Hom_U(L(\lambda),V\otimes L(\lambda))$ in
$\Hom_U(M(\lambda),V\otimes\widetilde{M}(-\lambda)^*)$ under the
embedding above consists of the morphisms
$\varphi:M(\lambda)\to V\otimes\widetilde{M}(-\lambda)^*$ such
that $\varphi(K_\lambda\one_{\lambda})=0$ and
$\varphi(M(\lambda))\subset V\otimes(\widetilde{K}_\lambda\widetilde{\one}_{-\lambda})^\perp$.\qed
\end{lemma}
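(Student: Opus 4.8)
The plan is to make the composite embedding explicit and then run a two-sided factorization argument, using the kernel of $\chi_\lambda'$ on the domain side and the image of $\chi_\lambda''$ on the target side. Concretely, the embedding in question sends $\psi\in\Hom_U(L(\lambda),V\otimes L(\lambda))$ to $\varphi=(\id\otimes\chi_\lambda'')\circ\psi\circ\chi_\lambda'$, the common value obtained by going right-then-down (equivalently down-then-right) in the preceding commutative square, since the horizontal maps are precomposition with $\chi_\lambda'$ and the vertical maps are postcomposition with $\id\otimes\chi_\lambda''$.

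For the inclusion ``$\subseteq$'' I would simply read off the two asserted conditions from the defining properties of $\chi_\lambda'$ and $\chi_\lambda''$. Since $\chi_\lambda'$ is the canonical surjection $M(\lambda)\to L(\lambda)$ with kernel $K(\lambda)=K_\lambda\one_\lambda$, any $\varphi$ of the above form kills $K_\lambda\one_\lambda$; and since $\Image\chi_\lambda''=(\widetilde{K}_\lambda\widetilde\one_{-\lambda})^\perp$, the image of $\id\otimes\chi_\lambda''$ lies in $V\otimes(\widetilde{K}_\lambda\widetilde\one_{-\lambda})^\perp$, so that $\varphi(M(\lambda))\subset V\otimes(\widetilde{K}_\lambda\widetilde\one_{-\lambda})^\perp$. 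Both conditions are thus automatic.

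For the reverse inclusion ``$\supseteq$'' I would reconstruct $\psi$ from a given $\varphi$ satisfying the two conditions. The kernel condition $\varphi(K_\lambda\one_\lambda)=0$ together with surjectivity of $\chi_\lambda'$ forces a unique factorization $\varphi=\bar\varphi\circ\chi_\lambda'$ with $\bar\varphi\colon L(\lambda)\to V\otimes\widetilde{M}(-\lambda)^*$, and $\bar\varphi$ is automatically $U$-linear because $\chi_\lambda'$ is a surjective $U$-morphism. Next, $\id\otimes\chi_\lambda''$ is an injective $U$-morphism with image exactly $V\otimes(\widetilde{K}_\lambda\widetilde\one_{-\lambda})^\perp$; the image condition says that $\bar\varphi$ lands in this subspace, so there is a unique $\psi\colon L(\lambda)\to V\otimes L(\lambda)$ with $\bar\varphi=(\id\otimes\chi_\lambda'')\circ\psi$, and injectivity of $\id\otimes\chi_\lambda''$ upgrades $U$-linearity of $\bar\varphi$ to $U$-linearity of $\psi$ (check $U$-equivariance on the image and pull it back along the injection). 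Then $\varphi=(\id\otimes\chi_\lambda'')\circ\psi\circ\chi_\lambda'$ exhibits $\varphi$ as the image of $\psi$.

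The only point needing care---the ``hard part,'' insofar as this essentially formal lemma has one---is the claim that $\id_V\otimes\chi_\lambda''$ is injective with image precisely $V\otimes\Image(\chi_\lambda'')$. Over the field $\kk$ this is exactness of tensoring, but since $\widetilde{M}(-\lambda)^*$ is the restricted dual and hence infinite dimensional, I would verify it one weight space at a time, where $V$ contributes finite-dimensional weight spaces and the statement reduces to the elementary fact for finite-dimensional spaces. With that in hand the factorization is unobstructed and the two inclusions close the proof.
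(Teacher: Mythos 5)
Your proof is correct and is precisely the argument the paper has in mind: the paper states this lemma with no proof at all (``It is clear that the following lemma holds''), and your two-sided factorization --- through the quotient $\chi_\lambda'$ using the kernel condition, then lifting along the injection $\id\otimes\chi_\lambda''$ using the image condition --- is the standard diagram chase that justifies it. One small remark: the ``hard part'' you isolate is not actually an issue, since over the field $\kk$ every vector space is flat, so $\id_V\otimes\chi_\lambda''$ is injective with image exactly $V\otimes\Image(\chi_\lambda'')$ regardless of any finiteness, and your weight-space verification, while harmless, is unnecessary.
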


\begin{proposition}\label{prop-tilde-K-lambda-0}
Let $\varphi\in\Hom_U(M(\lambda),V\otimes\widetilde{M}(-\lambda)^*)$.
Then $\varphi(M(\lambda))\subset V\otimes(\widetilde{K}_\lambda\widetilde{\one}_{-\lambda})^\perp$
iff $\widetilde{K}_\lambda\zeta_\varphi=0$.
\end{proposition}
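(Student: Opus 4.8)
The plan is to unwind the definition of $\zeta_\varphi$ through the identification of $\varphi$ as an element of $(V\otimes\widetilde M(-\lambda)^*)^{U^+}[\lambda]$ established in Proposition~\ref{prop-zeta-r-isomorphisms}, and then to translate the containment condition $\varphi(M(\lambda))\subset V\otimes(\widetilde K_\lambda\widetilde\one_{-\lambda})^\perp$ into a statement about how the $U^+$-action annihilates $\zeta_\varphi$. The key observation I want to exploit is that $\widetilde M(-\lambda)^*$ is the restricted dual of the opposite Verma module, so the perp-space $(\widetilde K_\lambda\widetilde\one_{-\lambda})^\perp\subset\widetilde M(-\lambda)^*$ consists precisely of functionals vanishing on $\widetilde K_\lambda\widetilde\one_{-\lambda}$. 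Since $\widetilde K_\lambda\subset U^+$ and $\widetilde\one_{-\lambda}$ is the lowest weight vector, pairing against an element $x\widetilde\one_{-\lambda}$ (for $x\in\widetilde K_\lambda$) is governed by the $U^+$-action on the dual, which under the antipode becomes an action by $U^+$ again.

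First I would write $\varphi(\one_\lambda)=\zeta_\varphi\otimes\widetilde\one_{-\lambda}^*+(\text{lower order terms})$, where ``lower order'' means the $\widetilde M(-\lambda)^*$-component has weight strictly above $\lambda$ (equivalently, pairs nontrivially only with vectors $x\widetilde\one_{-\lambda}$ for $x\in U^+$ of positive height). The condition $\varphi(M(\lambda))\subset V\otimes(\widetilde K_\lambda\widetilde\one_{-\lambda})^\perp$ is equivalent to: for every $m\in M(\lambda)$ and every $x\in\widetilde K_\lambda$, the evaluation $\langle\varphi(m),\,\mathrm{id}\otimes x\widetilde\one_{-\lambda}\rangle=0$. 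Because $M(\lambda)=U^-\one_\lambda$ and $\varphi$ is $U$-equivariant, it suffices to check this on the highest weight vector $\one_\lambda$: the $U^-$-action moves things around equivariantly, and the weight-grading forces the relevant pairing to be controlled entirely by $\zeta_\varphi$ paired against the top piece. Explicitly, I expect $\langle\varphi(\one_\lambda),\mathrm{id}\otimes x\widetilde\one_{-\lambda}\rangle$ to reduce, via the $U^+$-equivariance and the defining duality, to $\widetilde K_\lambda\cdot\zeta_\varphi$ acting on the $V$-factor (using the $U^+$-action on $V$ transferred through the pairing $\pi_\lambda$ that defines $\widetilde M(-\lambda)^*$).

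The heart of the argument is therefore a weight-counting computation showing that the only surviving contribution, when pairing $\varphi(\one_\lambda)$ against $x\widetilde\one_{-\lambda}$ with $x\in\widetilde K_\lambda$ of weight $\beta$, comes from the $V$-component of weight $-\beta$ of $\varphi(\one_\lambda)$, and that this component is exactly $x$ applied to $\zeta_\varphi$ (up to a nonzero scalar determined by the normalization $\langle\widetilde\one_{-\lambda}^*,\widetilde\one_{-\lambda}\rangle=1$). Concretely, I would use $U^+$-invariance of $\varphi(\one_\lambda)\in(V\otimes\widetilde M(-\lambda)^*)^{U^+}$: applying the coproduct of $x$ and the fact that $x$ lowers $\widetilde\one_{-\lambda}^*$ onto the top of the dual, the invariance relation rearranges into $\bigl(x_V\,\zeta_\varphi\bigr)\otimes\widetilde\one_{-\lambda}^*$ matching the pairing against $x\widetilde\one_{-\lambda}$. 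Once this identity is in hand, the containment holds for all $x\in\widetilde K_\lambda$ if and only if $x_V\zeta_\varphi=0$ for all such $x$, i.e.\ $\widetilde K_\lambda\zeta_\varphi=0$.

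The main obstacle I anticipate is bookkeeping the antipode and the comultiplication correctly when transferring the $U^+$-action from the dual module $\widetilde M(-\lambda)^*$ back to $V$, and in particular verifying that the ``lower order terms'' genuinely do not contribute to the pairing against $\widetilde K_\lambda\widetilde\one_{-\lambda}$. This is a pure weight argument: the terms $v_\mu\otimes\xi$ with $\xi$ of weight strictly above $-\lambda$ pair to zero against $x\widetilde\one_{-\lambda}$ unless the weights match, and matching the total weight $\lambda$ (the weight of $\varphi(\one_\lambda)$) against the weight of $\mathrm{id}\otimes x\widetilde\one_{-\lambda}$ pins down exactly one term. Making this precise, together with the nonvanishing of the normalization scalar, is the only genuinely delicate point; the rest is a direct consequence of $U^+$-equivariance and Lemma~\ref{lem-n+-inv}.
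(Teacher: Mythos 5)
Your proposal is correct and takes essentially the same route as the paper: reduce to the highest weight vector $\one_\lambda$ (valid because $(\widetilde{K}_\lambda\widetilde{\one}_{-\lambda})^\perp$ is a $U$-submodule of $\widetilde{M}(-\lambda)^*$), then read $\varphi(\one_\lambda)\in(V\otimes\widetilde{M}(-\lambda)^*)^{U^+}$ as a $U^+$-morphism $\psi:\widetilde{M}(-\lambda)\to V$ with $\psi(\widetilde{\one}_{-\lambda})=\zeta_\varphi$, so that $\psi(\widetilde{K}_\lambda\widetilde{\one}_{-\lambda})=\widetilde{K}_\lambda\zeta_\varphi$. The explicit coproduct/antipode/weight-counting computation you outline is exactly the unfolded content of Lemma~\ref{lem-n+-inv} and the $U^+$-equivariance of $\psi$, which the paper obtains simply by citing that lemma.
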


\begin{proof}
First notice that $\varphi(M(\lambda))\subset
V\otimes(\widetilde{K}_\lambda\widetilde{\one}_{-\lambda})^\perp$ iff
$\varphi(\mathbf1_\lambda)\in
V\otimes(\widetilde{K}_\lambda\widetilde{\one}_{-\lambda})^\perp$.
Indeed, for any $x\in U$ we have
$\varphi(x\mathbf1_\lambda)=\sum_{(x)}(x_{(1)}\otimes
x_{(2)})\varphi(\mathbf1_\lambda)$ and
$U\cdot(\widetilde{K}_\lambda\widetilde{\one}_{-\lambda})^\perp=
(\widetilde{K}_\lambda\widetilde{\one}_{-\lambda})^\perp$.

Denote by $\psi$ the element in
$\Hom_{U^+}(\widetilde{M}(-\lambda),V)$ that corresponds to
$\varphi(\mathbf1_\lambda)\in(V\otimes\widetilde{M}(-\lambda)^*)^{U^+}$
(see Lemma \ref{lem-n+-inv}). Under this notation
$\varphi(\mathbf1_\lambda)\in
V\otimes(\widetilde{K}_\lambda\widetilde{\one}_{-\lambda})^\perp$ iff
$\psi(\widetilde{K}_\lambda\widetilde{\one}_{-\lambda})=0$.
On the other hand,
$\zeta_\varphi=\psi(\widetilde{\one}_{-\lambda})$ and
$\psi(\widetilde{K}_\lambda\widetilde{\one}_{-\lambda})=
\widetilde{K}_\lambda\psi(\widetilde{\one}_{-\lambda})=\widetilde{K}_\lambda\zeta_\varphi$.
This completes the proof.
\end{proof}

\begin{proposition}\label{prop-K-lambda-0}
Let $\varphi\in\Hom_U(M(\lambda),V\otimes\widetilde{M}(-\lambda)^*)$.
Then $\varphi(K_\lambda\one_{\lambda})=0$ iff
$K_\lambda\zeta_\varphi=0$.
\end{proposition}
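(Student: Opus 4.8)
The plan is to deduce this statement from its already-proved mirror image, Proposition~\ref{prop-tilde-K-lambda-0}, by exploiting the symmetry between $M(\lambda)$ and $\widetilde{M}(-\lambda)$ encoded in the commutative diagram of Proposition~\ref{prop-zeta-r-isomorphisms}. Since $r$ is a vector space isomorphism, write $\varphi=r(\Psi)$ for a unique $\Psi\in\Hom_U(M(\lambda)\otimes\widetilde{M}(-\lambda),V)$, and set $\widetilde{\varphi}=\widetilde{r}(\check{R}^{-1}\Psi)\in\Hom_U(\widetilde{M}(-\lambda),V\otimes M(\lambda)^*)$. The diagram gives $\widetilde{\zeta}_{\widetilde{\varphi}}=q^{-(\lambda|\lambda)}\zeta_\varphi$, so that $K_\lambda\zeta_\varphi=0$ if and only if $K_\lambda\widetilde{\zeta}_{\widetilde{\varphi}}=0$. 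Thus it suffices to match the condition $\varphi(K_\lambda\one_\lambda)=0$ with $\widetilde{\varphi}(\widetilde{M}(-\lambda))\subset V\otimes(K_\lambda\one_\lambda)^\perp$, and then invoke the mirror of Proposition~\ref{prop-tilde-K-lambda-0}.

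The core step is the translation of these conditions through the currying isomorphisms $r$, $\widetilde{r}$ and the braiding. Unwinding the definition of $r$, the vanishing $\varphi(K_\lambda\one_\lambda)=0$ is equivalent to $\Psi\bigl((K_\lambda\one_\lambda)\otimes\widetilde{M}(-\lambda)\bigr)=0$, since pairing $\varphi(y\one_\lambda)$ against all of $\widetilde{M}(-\lambda)$ recovers $\Psi(y\one_\lambda\otimes(\,\cdot\,))$. Likewise, $\widetilde{\varphi}(\widetilde{M}(-\lambda))\subset V\otimes(K_\lambda\one_\lambda)^\perp$ is equivalent to $(\check{R}^{-1}\Psi)\bigl(\widetilde{M}(-\lambda)\otimes(K_\lambda\one_\lambda)\bigr)=0$. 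The point that links the two is that $K(\lambda)=K_\lambda\one_\lambda$ is a $\U$-submodule of $M(\lambda)$, so by naturality of the braiding with respect to the inclusion $K(\lambda)\hookrightarrow M(\lambda)$ one has $\check{R}^{-1}\bigl(\widetilde{M}(-\lambda)\otimes K(\lambda)\bigr)=K(\lambda)\otimes\widetilde{M}(-\lambda)$. Hence the two evaluation conditions on $\Psi$ coincide, and $\varphi(K_\lambda\one_\lambda)=0$ if and only if $\widetilde{\varphi}(\widetilde{M}(-\lambda))\subset V\otimes(K_\lambda\one_\lambda)^\perp$.

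It remains to record the mirror of Proposition~\ref{prop-tilde-K-lambda-0}: for $\widetilde{\varphi}\in\Hom_U(\widetilde{M}(-\lambda),V\otimes M(\lambda)^*)$ one has $\widetilde{\varphi}(\widetilde{M}(-\lambda))\subset V\otimes(K_\lambda\one_\lambda)^\perp$ if and only if $K_\lambda\widetilde{\zeta}_{\widetilde{\varphi}}=0$. This is proved exactly as Proposition~\ref{prop-tilde-K-lambda-0}, with the roles of $M(\lambda)$ and $\widetilde{M}(-\lambda)$ (and of $U^+$ and $U^-$) interchanged: the lowest weight vector $\widetilde{\one}_{-\lambda}$ is $U^-$-annihilated, so $\widetilde{\varphi}(\widetilde{\one}_{-\lambda})\in(V\otimes M(\lambda)^*)^{U^-}$ corresponds, via the first identity of Lemma~\ref{lem-n+-inv}, to some $\widetilde{\psi}\in\Hom_{U^-}(M(\lambda),V)$ with $\widetilde{\zeta}_{\widetilde{\varphi}}=\widetilde{\psi}(\one_\lambda)$; the inclusion condition then becomes $\widetilde{\psi}(K_\lambda\one_\lambda)=K_\lambda\widetilde{\psi}(\one_\lambda)=K_\lambda\widetilde{\zeta}_{\widetilde{\varphi}}=0$. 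Chaining the three equivalences yields the claim. The only step requiring genuine care is the braiding identity $\check{R}^{-1}(\widetilde{M}(-\lambda)\otimes K(\lambda))=K(\lambda)\otimes\widetilde{M}(-\lambda)$; everything else is bookkeeping of the adjunctions already established.
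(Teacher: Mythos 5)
Your proof is correct and follows essentially the same route as the paper: transporting $\varphi$ through $r^{-1}$, the braiding $\check{R}^{-1}$, and $\widetilde{r}$ to an element of $\Hom_U(\widetilde{M}(-\lambda),V\otimes M(\lambda)^*)$, using that $K_\lambda\one_\lambda\otimes\widetilde{M}(-\lambda)$ is swapped by the braiding (your ``naturality'' argument is the same fact the paper invokes via the $U\otimes U$-submodule property), then applying the mirror of Proposition~\ref{prop-tilde-K-lambda-0} and the identity $\widetilde{\zeta}_{\widetilde{\varphi}}=q^{-(\lambda|\lambda)}\zeta_\varphi$ from Proposition~\ref{prop-zeta-r-isomorphisms}. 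The only difference is expository: you spell out the mirror argument that the paper compresses into ``arguing as in the proof of Proposition~\ref{prop-tilde-K-lambda-0}.''
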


\begin{proof}
Consider
$\widehat{\varphi}=r^{-1}(\varphi)\in\Hom_U(M(\lambda)\otimes
\widetilde{M}(-\lambda),V)$,
$\overline{\varphi}=\widehat{\varphi}\circ\check{R}^{-1}\in\Hom_U(\widetilde{M}(-\lambda)\otimes
M(\lambda),V)$, and
$\widetilde{\varphi}=\widetilde{r}(\overline{\varphi})\in\Hom_U(\widetilde{M}(-\lambda),V\otimes
M(\lambda)^*)$ (see Proposition \ref{prop-zeta-r-isomorphisms}).
Since $K_\lambda\one_{\lambda}\otimes\widetilde{M}(-\lambda)$ is
an $U\otimes U$-submodule in $M(\lambda)\otimes\widetilde{M}(-\lambda)$, one has
$\varphi(K_\lambda\one_{\lambda})=0$ iff
$\widehat{\varphi}(K_\lambda\one_{\lambda}\otimes\widetilde{M}(-\lambda))=0$
iff $\overline{\varphi}(\widetilde{M}(-\lambda)\otimes K_\lambda\one_{\lambda})=0$
iff $\widetilde{\varphi}(\widetilde{M}(-\lambda))
\subset V\otimes(K_\lambda\one_{\lambda})^\perp$.

Arguing as in the proof of Proposition \ref{prop-tilde-K-lambda-0} we see
that $\widetilde{\varphi}(\widetilde{M}(-\lambda)) \subset
V\otimes(K_\lambda\one_{\lambda})^\perp$ iff
$K_\lambda\widetilde{\zeta}_{\widetilde{\varphi}}=0$. Now it is enough to
notice that $\widetilde{\zeta}_{\widetilde{\varphi}}=
q^{-(\lambda|\lambda)}\zeta_\varphi$ by Proposition
\ref{prop-zeta-r-isomorphisms}, and therefore
$K_\lambda\widetilde{\zeta}_{\widetilde{\varphi}}=0$ iff
$K_\lambda\zeta_\varphi=0$.
\end{proof}

Define maps $u:\Hom_U(L(\lambda),L(\lambda)\otimes V)\to V[0]$ and
$v:\Hom_U(L(\lambda),V\otimes L(\lambda))\to V[0]$
via $\varphi\mapsto u_\varphi$, where
$\varphi(\overline\one_\lambda)=\overline\one_\lambda\otimes
u_\varphi\,+$ lower order terms, and $\psi\mapsto v_\psi$, where
$\psi(\overline\one_\lambda)=
v_\psi\otimes\overline\one_\lambda\,+$ lower order terms.

\begin{proposition}\label{prop-u}
The map $v$ defines the isomorphism
$\Hom_U(L(\lambda),V\otimes L(\lambda))\simeq
V[0]^{K_\lambda+\widetilde{K}_\lambda}$.
\end{proposition}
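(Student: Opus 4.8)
The plan is to connect the map $v$ to the already-established isomorphisms by identifying $\Hom_U(L(\lambda),V\otimes L(\lambda))$ with a subspace of $\Hom_U(M(\lambda),V\otimes\widetilde M(-\lambda)^*)$ via the embedding coming from $\chi_\lambda'$ and $\chi_\lambda''$, and then reading off the two annihilation conditions from Propositions \ref{prop-tilde-K-lambda-0} and \ref{prop-K-lambda-0}. First I would take $\varphi\in\Hom_U(L(\lambda),V\otimes L(\lambda))$ and push it through the embedding of Lemma \ref{lem-image-chi-lambda-pr} to a morphism $\varphi':M(\lambda)\to V\otimes\widetilde M(-\lambda)^*$. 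By that lemma, such $\varphi'$ are characterized by the two conditions $\varphi'(K_\lambda\one_\lambda)=0$ and $\varphi'(M(\lambda))\subset V\otimes(\widetilde K_\lambda\widetilde\one_{-\lambda})^\perp$.

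Next I would compare the two ``leading term'' extraction maps. The map $v$ records the coefficient of $\overline\one_\lambda$ in $\psi(\overline\one_\lambda)$ when written in $V\otimes L(\lambda)$, while $\zeta$ records the coefficient of $\widetilde\one_{-\lambda}^*$ in $\varphi'(\one_\lambda)$ inside $V\otimes\widetilde M(-\lambda)^*$. Since $\chi_\lambda''$ sends $\overline\one_\lambda$ to (a nonzero multiple of) the functional dual to $\widetilde\one_{-\lambda}$, and $\chi_\lambda'$ sends $\one_\lambda$ to $\overline\one_\lambda$, the two leading coefficients agree up to a fixed nonzero scalar; hence $v_\varphi$ coincides (up to normalization) with $\zeta_{\varphi'}$. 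This is the step I expect to require the most care: one must check that the chosen dual vectors $\one_\lambda^*$, $\widetilde\one_{-\lambda}^*$ are compatible with the pairing $\pi_\lambda$ defining $\chi_\lambda$, so that the identification of leading terms is genuinely an equality of vectors in $V[0]$ and not merely an abstract isomorphism.

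With $v_\varphi=\zeta_{\varphi'}$ in hand, the two conditions translate directly. By Proposition \ref{prop-tilde-K-lambda-0}, the inclusion $\varphi'(M(\lambda))\subset V\otimes(\widetilde K_\lambda\widetilde\one_{-\lambda})^\perp$ is equivalent to $\widetilde K_\lambda\zeta_{\varphi'}=0$, i.e.\ $\widetilde K_\lambda v_\varphi=0$. By Proposition \ref{prop-K-lambda-0}, the condition $\varphi'(K_\lambda\one_\lambda)=0$ is equivalent to $K_\lambda\zeta_{\varphi'}=0$, i.e.\ $K_\lambda v_\varphi=0$. Together these say precisely that $v_\varphi\in V[0]^{K_\lambda+\widetilde K_\lambda}$, so $v$ lands in the asserted subspace.

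Finally I would argue bijectivity. Injectivity of $v$ follows because the maps $\zeta$, $r$, $\widetilde r$, $\widetilde\zeta$ are all isomorphisms (Proposition \ref{prop-zeta-r-isomorphisms}) and the embedding of Lemma \ref{lem-image-chi-lambda-pr} is injective, so $\varphi$ is recovered from its leading term. For surjectivity, given any $w\in V[0]^{K_\lambda+\widetilde K_\lambda}$, I would use $\zeta^{-1}$ to produce $\varphi'\in\Hom_U(M(\lambda),V\otimes\widetilde M(-\lambda)^*)$ with $\zeta_{\varphi'}=w$; the conditions $K_\lambda w=0$ and $\widetilde K_\lambda w=0$, run backwards through Propositions \ref{prop-K-lambda-0} and \ref{prop-tilde-K-lambda-0}, guarantee that $\varphi'$ satisfies both hypotheses of Lemma \ref{lem-image-chi-lambda-pr}, hence factors through an element of $\Hom_U(L(\lambda),V\otimes L(\lambda))$ mapping to $w$ under $v$. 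This closes the argument.
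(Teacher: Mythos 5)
Your proof is correct and takes essentially the same route as the paper: the paper's own proof simply observes that $v$ factors as the embedding of Lemma~\ref{lem-image-chi-lambda-pr} followed by $\zeta$, and then invokes that lemma together with Propositions~\ref{prop-tilde-K-lambda-0} and~\ref{prop-K-lambda-0}, exactly as you do. The normalization point you flag as delicate does work out to an exact equality $v_\varphi=\zeta_{\varphi'}$, since $\chi_\lambda(\one_\lambda)$ pairs to $1$ with $\widetilde\one_{-\lambda}$ and hence equals $\widetilde\one^*_{-\lambda}$ under the paper's choice $\langle\widetilde\one^*_{-\lambda},\widetilde\one_{-\lambda}\rangle=1$; this is the unstated content of the paper's word ``decomposed''.
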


\begin{proof}
Observe that $v$ can be decomposed as
\[
\Hom_U(L(\lambda),V\otimes L(\lambda))\longrightarrow
\Hom_U(M(\lambda),V\otimes\widetilde{M}(-\lambda)^*)
\stackrel{\zeta}{\longrightarrow}V[0],
\]
where the first arrow is the natural embedding considered in Lemma
\ref{lem-image-chi-lambda-pr}. Now it is enough to apply the above
mentioned lemma and Propositions \ref{prop-tilde-K-lambda-0} and
\ref{prop-K-lambda-0}.
\end{proof}

\begin{proposition}\label{prop-v}
The diagram
\[
\xymatrix{\Hom_U(L(\lambda),V\otimes L(\lambda)) \ar[d]^{\check{R}}
\ar[r]^{\hskip 1.7cm v} &
V[0] \ar[d]^{\id} \\
\Hom_U(L(\lambda),L(\lambda)\otimes V) \ar[r]^{\hskip 1.7cm u} & V[0]}
\]
is commutative.
\end{proposition}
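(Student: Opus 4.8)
The plan is to unwind the definitions of $u$ and $v$ and extract leading terms after applying the $R$-matrix. The vertical arrow $\check{R}$ sends $\psi\in\Hom_U(L(\lambda),V\otimes L(\lambda))$ to $\check{R}\circ\psi\in\Hom_U(L(\lambda),L(\lambda)\otimes V)$, where $\check{R}\colon V\otimes L(\lambda)\to L(\lambda)\otimes V$ is the isomorphism furnished by the $R$-matrix; this is legitimate since $V$, being a direct sum of finite-dimensional admissible modules, is in particular a direct sum of highest weight modules, which is the hypothesis recalled at the end of Section~\ref{Sect-U}. Commutativity of the diagram is then exactly the identity $u_{\check{R}\circ\psi}=v_\psi$ in $V[0]$, and the whole statement reduces to a single leading-term computation.

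First I would record the triangular structure of the $R$-matrix. Writing $\check{R}=\tau\circ R$ with $\tau$ the flip of tensor factors, one has a weight decomposition $R=\sum_{\beta\in Q_+}R_\beta$ in which $R_0$ is the diagonal (Cartan) part, and for $\beta\neq 0$ one has $R_\beta\in U^+_\beta\,\widehat{\otimes}\,U^-_{-\beta}$, so that the first leg of $R_\beta$ raises weight by $\beta$ while the second leg lowers weight by $\beta$. The only input I need about $R_0$ is its scalar on a weight vector of weight $\mu\otimes\nu$: comparing with the normalization $\check{R}(\one_{\mu}\otimes\widetilde{\one}_{\nu})=q^{-(\mu|\nu)}\widetilde{\one}_{\nu}\otimes\one_{\mu}$ recalled in Section~\ref{Sect-U} (where, for a highest-weight-times-lowest-weight vector, every term with $\beta\neq 0$ is annihilated), one reads off that $R_0$ acts by $q^{-(\mu|\nu)}$.

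Next I would compute $(\check{R}\circ\psi)(\overline{\one}_\lambda)$ starting from $\psi(\overline{\one}_\lambda)=v_\psi\otimes\overline{\one}_\lambda+(\text{terms }a\otimes b\text{ with }\operatorname{wt}(b)<\lambda)$. On the leading term $v_\psi\otimes\overline{\one}_\lambda$, of weight $0\otimes\lambda$, the Cartan part acts by $q^{-(0|\lambda)}=1$, whereas each $R_\beta$ with $\beta\neq 0$ carries $\overline{\one}_\lambda$ to a vector of weight $\lambda-\beta<\lambda$; after applying $\tau$ these fall into $L(\lambda)\otimes V$ with \emph{first} leg of weight $<\lambda$. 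Similarly, any lower-order term $a\otimes b$ with $\operatorname{wt}(b)<\lambda$ is sent by $R$ to a sum whose second legs have weight $\le\operatorname{wt}(b)<\lambda$, so after $\tau$ it contributes only first legs of weight $<\lambda$. Hence the unique contribution to the $\overline{\one}_\lambda\otimes(-)$ component of $(\check{R}\circ\psi)(\overline{\one}_\lambda)$ is $\tau(v_\psi\otimes\overline{\one}_\lambda)=\overline{\one}_\lambda\otimes v_\psi$, which by the definition of $u$ yields $u_{\check{R}\circ\psi}=v_\psi$, as required.

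The main obstacle is purely bookkeeping of weights: one must keep straight that ``lower order'' refers to a depressed weight in the $L(\lambda)$-leg on \emph{both} sides—the second leg before applying $\check{R}$ and the first leg afterwards—and confirm that neither the strictly positive part $\sum_{\beta\neq 0}R_\beta$ of the $R$-matrix nor the genuinely lower-order terms of $\psi(\overline{\one}_\lambda)$ can feed back into weight $\lambda$ in the $L(\lambda)$-leg. Once the estimate $\operatorname{wt}(\text{second leg})\le\operatorname{wt}(b)$ under $R$ is in place, guaranteed by $R_\beta\in U^+_\beta\,\widehat{\otimes}\,U^-_{-\beta}$, the identification $u_{\check{R}\circ\psi}=v_\psi$ is immediate and the diagram commutes.
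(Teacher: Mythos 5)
Your proof is correct and follows essentially the same route as the paper's: apply $\check{R}$ to $\psi(\overline\one_\lambda)$, use that $v_\psi$ has weight $0$ so the Cartan part of the $R$-matrix acts by $q^{-(0|\lambda)}=1$ on the leading term, and verify that neither the positive part of $R$ nor the lower-order terms of $\psi(\overline\one_\lambda)$ can feed back into the $\overline\one_\lambda\otimes(-)$ component. The paper's proof is simply a terser version of this same weight bookkeeping (stated in one line, with the final leading term written, by an evident slip, as $v_\psi\otimes\overline\one_\lambda$ instead of $\overline\one_\lambda\otimes v_\psi$), and your write-up just makes explicit the triangular structure $R_\beta\in U^+_\beta\,\widehat{\otimes}\,U^-_{-\beta}$ that the paper leaves implicit.
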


\begin{proof}
Take $\psi\in\Hom_U(L(\lambda),V\otimes L(\lambda))$ and
$\varphi=\check{R}\circ\psi\in\Hom_U(L(\lambda),L(\lambda)\otimes V)$. Since
$v_\psi$ is of weight $0$, we have
$\varphi(\overline\one_\lambda)=\check{R}\psi(\overline\one_\lambda)=
\check{R}(v_\psi\otimes\overline\one_\lambda)\,+$ lower order terms
$=\overline\one_\lambda\otimes v_\psi\,+$ lower order terms.
\end{proof}

Applying Propositions \ref{prop-u} and \ref{prop-v} to the case $V=F$ we get Theorem
\ref{thm-theta}.

\subsection{Reduced fusion elements}

In this subsection we describe $\Theta^{-1}:
F[0]^{K_\lambda+\widetilde{K}_\lambda}\to\Hom_U(L(\lambda),L(\lambda)\otimes
F)$ explicitly. We are going to obtain a formula for $\Theta^{-1}$ in terms
of the Shapovalov form. Recall that we can regard $\S_\lambda$ as a
bilinear form on $M(\lambda)$. Denote by $\overline{\S}_\lambda$ the
corresponding bilinear form on $L(\lambda)$. For any $\beta\in Q_+$ denote
by $\overline{\S}_\lambda^\beta$ the restriction of
$\overline{\S}_\lambda$ to $L(\lambda)[\lambda-\beta]$. Let
$y_\beta^i\cdot\overline\one_\lambda$ be an arbitrary basis in
$L(\lambda)[\lambda-\beta]$, where $y_\beta^i\in U^-[-\beta]$.

Take $f\in F[0]^{K_\lambda+\widetilde{K}_\lambda}$ and set
$\varphi=\Theta^{-1}(f)$, 
\[
\varphi(\overline\one_\lambda)=
\sum_{\beta\in Q_+}\sum_iy_\beta^i\overline\one_\lambda\otimes f^{\beta,i}.
\]
\begin{remark}
For $\beta=0$ we have $y_\beta^i=1$ and $f^{\beta,i}=f$.
\end{remark}
\begin{proposition}\label{prop-Theta-inv}
$f^{\beta,i}=\sum_j\left(\overline{\mathbb
S}_\lambda^\beta\right)^{-1}_{ij}\overrightarrow{\theta\left(y_\beta^j\right)}f$.
\end{proposition}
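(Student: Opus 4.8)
The plan is to reduce the proposition to the single family of identities
\[
\sum_i \bigl(\overline{\S}_\lambda^\beta\bigr)_{ki}\, f^{\beta,i}
= \overrightarrow{\theta(y_\beta^k)}\,f \qquad (k\ \text{arbitrary}),
\]
which is equivalent to the assertion: since $K(\lambda)$ is exactly the radical of $\S_\lambda$ on $M(\lambda)$, the induced form $\overline{\S}_\lambda$ is nondegenerate on $L(\lambda)$, so every Gram matrix $\overline{\S}_\lambda^\beta$ is invertible. Writing the displayed relations as $\overline{\S}_\lambda^\beta F^\beta = G^\beta$ with $F^\beta=(f^{\beta,i})_i$ and $G^\beta=(\overrightarrow{\theta(y_\beta^k)}f)_k$ and inverting yields precisely $f^{\beta,i}=\sum_j(\overline{\S}_\lambda^\beta)^{-1}_{ij}\overrightarrow{\theta(y_\beta^j)}f$.

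To prove the displayed identities, I would set $v=\varphi(\overline\one_\lambda)$ and introduce the $F$-valued pairing $\Psi(w)=(\overline{\S}_\lambda(w,\cdot)\otimes\id)(v)$ for $w\in L(\lambda)$. Because $\overline{\S}_\lambda$ pairs distinct weight spaces trivially, only the $\beta$-homogeneous part of $v$ survives when $w=y_\beta^k\overline\one_\lambda$, giving $\Psi(y_\beta^k\overline\one_\lambda)=\sum_i(\overline{\S}_\lambda^\beta)_{ki}f^{\beta,i}$; and since the $\beta=0$ term of $v$ is $\overline\one_\lambda\otimes f$ with $\overline{\S}_\lambda(\overline\one_\lambda,\overline\one_\lambda)=1$, one gets $\Psi(\overline\one_\lambda)=f$. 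It therefore suffices to prove $\Psi(y\overline\one_\lambda)=\overrightarrow{\theta(y)}f$ for all $y\in U^-$.

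The heart of the argument is the recursion $\Psi(f_i w)=\overrightarrow{\theta(f_i)}\Psi(w)=-\overrightarrow{e_i}\Psi(w)$, which I would establish by combining two inputs. First, contravariance of the Shapovalov form with respect to $\omega$ gives $\overline{\S}_\lambda(f_iw,m')=\overline{\S}_\lambda(w,\omega(f_i)m')=\overline{\S}_\lambda(w,k_i^{-1}e_im')$, so that $\Psi(f_iw)$ is expressed through $e_i$ acting in the $L(\lambda)$-slot of $v$, weighted by the scalar $q^{-d_i\langle\lambda-\beta,\alpha_i^\vee\rangle}$ by which $k_i^{-1}$ acts on weight $\lambda-\beta$. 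Second, since $\varphi$ is a $U$-morphism and $e_i\overline\one_\lambda=0$, the vector $v$ is singular: $\Delta(e_i)v=(e_i\otimes1)v+(k_i\otimes\overrightarrow{e_i})v=0$. Applying $\overline{\S}_\lambda(w,\cdot)\otimes\id$ to this relation converts the $(e_i\otimes1)$-term into exactly the $e_i$-in-the-first-slot expression appearing in $\Psi(f_iw)$, and converts the $(k_i\otimes\overrightarrow{e_i})$-term into $q^{d_i\langle\lambda-\beta,\alpha_i^\vee\rangle}\,\overrightarrow{e_i}\Psi(w)$, using the identity $\Psi(w)=\sum_i(\overline{\S}_\lambda^\beta)_{ki}f^{\beta,i}$ once more and pulling $\overrightarrow{e_i}$ out by linearity. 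The two weight factors are mutually inverse, so they cancel and leave $\Psi(f_iw)=-\overrightarrow{e_i}\Psi(w)$.

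Feeding the base case $\Psi(\overline\one_\lambda)=f$ into this recursion, and using that $\theta$ is an algebra homomorphism $U^-\to U^+$ with $\theta(f_i)=-e_i$ while $a\mapsto\overrightarrow{a}$ is a left action, an induction on the length of $y$ gives $\Psi(y\overline\one_\lambda)=\overrightarrow{\theta(y)}f$ for all $y\in U^-$, in particular for $y=y_\beta^k$, which is the required identity. I expect the only delicate point to be the bookkeeping of the weight scalars: one must verify that the power of $q$ produced by $k_i^{-1}$ in the contravariance step is the exact inverse of the one produced by $k_i$ in the singular-vector step, since this cancellation is what keeps the recursion free of spurious factors. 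Everything else is routine linear algebra together with the standard contravariance of $\S_\lambda$ and the nondegeneracy of $\overline{\S}_\lambda$ on $L(\lambda)$.
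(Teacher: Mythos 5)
Your proof is correct, and it reduces the proposition to exactly the same linear system as the paper does, namely $\sum_i\bigl(\overline{\S}_\lambda^\beta\bigr)_{ji}f^{\beta,i}=\overrightarrow{\theta\left(y_\beta^j\right)}f$ followed by inversion of the Gram matrix, using the same two inputs: the singularity of $\xi=\varphi(\overline\one_\lambda)$ and the $\omega$-contravariance of the Shapovalov form. The difference is purely in how the induction is organized, but it is a real difference in economy. The paper proves the identity $(x\otimes1)\xi=\Lambda_\beta(1\otimes\sigma^{-1}(x))\xi$ for an entire monomial $x\in U^+[\beta]$ and pairs against $\overline\one_\lambda$ alone; to turn this into Gram-matrix form it must introduce the auxiliary antiautomorphism $\omega'$, the elements $x_\beta^j=\omega'(y_\beta^j)$, and the monomial identities $\omega(x_\beta^j)=q^{c(\beta)}y_\beta^jk_\beta$ and $\sigma^{-1}(x_\beta^j)=q^{c(\beta)}\theta(y_\beta^j)k_\beta^{-1}$ (each an induction on $\height\beta$), with the global constants $q^{c(\beta)}\Lambda_\beta$ cancelling only at the end. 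You instead pair $\xi$ against an arbitrary weight vector $w$ via $\Psi(w)=(\overline{\S}_\lambda(w,\cdot)\otimes\id)(\xi)$ and move one generator $f_i$ at a time across the form, so the cancellation happens locally at each step and no constants $c(\beta)$, no $\omega'$, and no antipode bookkeeping ever appear. Moreover, the ``delicate point'' you flag is in fact automatic: since $\overline{\S}_\lambda(f_iw,\cdot)=\overline{\S}_\lambda(w,k_i^{-1}e_i\,\cdot)$, you may work at the operator level, $(k_i^{-1}e_i\otimes1)\xi=-(k_i^{-1}\otimes1)(k_i\otimes \overrightarrow{e_i})\xi=-(1\otimes\overrightarrow{e_i})\xi$, and the factors $k_i^{\pm1}$ cancel before any weight is evaluated, giving $\Psi(f_iw)=-\overrightarrow{e_i}\Psi(w)$ with no scalar tracking at all. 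What the paper's route buys is an explicit identity for the singular vector $\xi$ itself (the quantum analogue of the classical fusion-element computation), stated for all of $U^+[\beta]$ at once; what yours buys is a shorter, less error-prone argument relying only on the generator-level facts $\omega(f_i)=k_i^{-1}e_i$, $\theta(f_i)=-e_i$, and $\Delta(e_i)=e_i\otimes1+k_i\otimes e_i$.
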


\begin{proof}
For any $\beta=\sum_jc_j\alpha_j\in Q_+$ set $k_\beta=\prod_jk_j^{c_j}\in T$
and $\Lambda_\beta=q^\lambda(k_\beta)=
\prod_jq^{d_jc_j\langle\lambda,\alpha_j^\vee\rangle}$.

Set $\xi=\varphi(\overline\one_\lambda)$. Clearly, $\xi$ is a singular
element in $L(\lambda)\otimes F$. In particular, $(k_i\otimes
k_i)\xi=q^{d_i\langle\lambda,\alpha_i^\vee\rangle}\xi$ and $(e_i \otimes 1 +
k_i \otimes e_i)\xi = 0$. Thus $(e_i \otimes
1)\xi=q^{d_i\langle\lambda,\alpha_i^\vee\rangle}(1 \otimes
\sigma^{-1}(e_i))\xi$. By induction we get $(x \otimes 1)\xi=\Lambda_\beta(1
\otimes \sigma^{-1}(x))\xi$ for any $x\in U^+[\beta]$.

Let $\omega'$ be the involutive antiautomorphism of $\U$ given by
$\omega'(e_i)=f_i$, $\omega'(f_i)=e_i$, $\omega'(t_i)=t_i$.
Set $x_\beta^j=\omega'(y_\beta^j)$. Then we have
\begin{equation}\label{eqn-xi}
\left(\overline{\mathbb
S}_\lambda\otimes\id\right)
\left(\overline\one_\lambda\otimes
\left(x_\beta^j\otimes1\right)\xi\right)
=\Lambda_\beta\left(\overline{\S}_\lambda\otimes\id\right)
\left(\overline\one_\lambda\otimes\left(1\otimes
\sigma^{-1}\left(x_\beta^j\right)\right)\xi\right).
\end{equation}

It is easy to show by induction on $\height\beta$ that
$\omega(x_\beta^j)=q^{c(\beta)}y_\beta^jk_\beta$ and
$\sigma^{-1}(x_\beta^j)=q^{c(\beta)}\theta(y_\beta^j)k_\beta^{-1}$ for a
certain $c(\beta)$. (Actually
$c(\beta)=d_1+\ldots+d_l-\frac{1}{2}\langle\beta,d_1\alpha_1^\vee+\ldots+d_l\alpha_l^\vee\rangle$.)
Hence the l.\ h.\ s.\ of (\ref{eqn-xi}) equals
\[
\sum_i\overline{\S}_\lambda\left(\omega\left(x_\beta^j\right)
\overline\one_\lambda,y_\beta^i\overline\one_\lambda\right)f^{\beta,i}=
q^{c(\beta)}\Lambda_\beta\sum_i\overline{\S}_\lambda
\left(y_\beta^j\overline\one_\lambda,y_\beta^i\overline\one_\lambda\right)f^{\beta,i}
\]
and the r.\ h.\ s.\ of (\ref{eqn-xi}) equals
\[
\Lambda_\beta\overrightarrow{\sigma^{-1}\left(x_\beta^j\right)}f=
q^{c(\beta)}\Lambda_\beta\overrightarrow{\theta\left(y_\beta^j\right)}f.
\]
Combining these together we get
\[
\sum_i\overline{\S}_\lambda
\left(y_\beta^j\overline\one_\lambda,y_\beta^i\overline\one_\lambda\right)f^{\beta,i}=
\overrightarrow{\theta\left(y_\beta^j\right)}f,
\]
and the proposition follows.
\end{proof}

\medskip

For any $\lambda\in\h^*$ consider
\begin{equation}\label{J_red}
J^{\red}(\lambda)=\sum_{\beta\in Q_+}\sum_{i,j}\left(\overline{\mathbb
S}_\lambda^\beta\right)^{-1}_{ij}
y_\beta^i\otimes\theta\left(y_\beta^j\right).
\end{equation}
One can regard $J^{\red}(\lambda)$ as an element in a certain completion of
$U^-\otimes U^+$.

\begin{remark}
This element $J^{\red}(\lambda)$ is not uniquely defined (e.g., because
$U^-\to L(\lambda)$ has a kernel), but this does not affect our further
considerations.
\end{remark}

\begin{remark}
For $f\in F[0]^{K_\lambda+\widetilde{K}_\lambda}$ and
$\varphi=\Theta^{-1}(f)$ one has
$\varphi(\overline\one_\lambda)=J^{\red}(\lambda)(\overline\one_\lambda\otimes
f)$.
\end{remark}
Let us define an associative product $\star_\lambda$ on
$F[0]^{K_\lambda+\widetilde{K}_\lambda}$ by means of $\Theta$, i.e., for any
$f_1,f_2\in F[0]^{K_\lambda+\widetilde{K}_\lambda}$ we define
$f_1\star_\lambda f_2=\Theta(\varphi_1\ast\varphi_2)$, where
$\varphi_1=\Theta^{-1}(f_1)$, $\varphi_2=\Theta^{-1}(f_2)$, and $\ast$ is
the product on $\Hom_U(L(\lambda),L(\lambda)\otimes F)$ given by
\eqref{eqn-basic-star-prod}. By this definition, we get a right $\U$-module
algebra $(F[0]^{K_\lambda+\widetilde{K}_\lambda},\star_\lambda)$.

\begin{theorem}\label{14}
We have
\begin{equation}\label{sym_space_star_prod}
f_1\star_\lambda f_2= \mu\left(\overrightarrow{J^{\red}(\lambda)}(f_1\otimes
f_2)\right).
\end{equation}
\end{theorem}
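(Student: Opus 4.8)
The plan is to unwind the definition $f_1\star_\lambda f_2=\Theta(\varphi_1\ast\varphi_2)$, where $\varphi_s=\Theta^{-1}(f_s)$, and to compute the highest-weight component of $(\varphi_1\ast\varphi_2)(\overline\one_\lambda)$ directly from the explicit form of $\Theta^{-1}$ given in Proposition~\ref{prop-Theta-inv}. Writing $\varphi_2(\overline\one_\lambda)=\sum_{\beta,i}y_\beta^i\overline\one_\lambda\otimes f_2^{\beta,i}$, applying $\varphi_1\otimes\id$ yields $\sum_{\beta,i}\varphi_1(y_\beta^i\overline\one_\lambda)\otimes f_2^{\beta,i}$, and $\id\otimes\mu$ then multiplies the two $F$-factors. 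Since $\Theta$ extracts the coefficient of $\overline\one_\lambda$ in the first tensor leg, everything reduces to identifying the highest-weight part (in $L(\lambda)$) of $\varphi_1(y_\beta^i\overline\one_\lambda)$.

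The crux of the argument is a weight bookkeeping for the coproduct. I claim that for any $y\in U^-$ of weight $-\beta$, the part of $\Delta(y)$ whose first tensor factor has weight $0$ is exactly $1\otimes y$, all remaining summands having first factor of strictly negative weight. I would prove this by induction on $\height\beta$, using $\Delta(f_i)=f_i\otimes k_i^{-1}+1\otimes f_i$ and multiplicativity of $\Delta$ (the generic step multiplies $1\otimes y+\sum p_a\otimes q_a$, with $\mathrm{wt}(p_a)<0$, by $\Delta(f_i)$, and only the term $(1\otimes y)(1\otimes f_i)=1\otimes yf_i$ keeps a weight-$0$ first factor). Since $\varphi_1$ is $U$-linear, $\varphi_1(y_\beta^i\overline\one_\lambda)=\Delta(y_\beta^i)\cdot\varphi_1(\overline\one_\lambda)$, with $U^-$ acting on $L(\lambda)$ by left multiplication and on $F$ via the left regular action $\overrightarrow{\cdot}$. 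In $\varphi_1(\overline\one_\lambda)=\sum_{\gamma,k}y_\gamma^k\overline\one_\lambda\otimes f_1^{\gamma,k}$, a summand contributes to the weight-$\lambda$ part of the $L(\lambda)$-leg only if a first factor of $\Delta(y_\beta^i)$, necessarily of weight $\le 0$, acts on a vector of weight $\lambda-\gamma$ and returns weight $\lambda$; this forces the first factor to have weight $0$ and $\gamma=0$, whence $y_\gamma^k=1$ and $f_1^{\gamma,k}=f_1$ (the Remark after Proposition~\ref{prop-Theta-inv}). Together with the coproduct claim, this shows the highest-weight part of $\varphi_1(y_\beta^i\overline\one_\lambda)$ is $\overline\one_\lambda\otimes\overrightarrow{y_\beta^i}f_1$.

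Feeding this back, the coefficient of $\overline\one_\lambda$ in $(\varphi_1\ast\varphi_2)(\overline\one_\lambda)$ is $\sum_{\beta,i}(\overrightarrow{y_\beta^i}f_1)\cdot f_2^{\beta,i}$, so that $f_1\star_\lambda f_2=\sum_{\beta,i}(\overrightarrow{y_\beta^i}f_1)\cdot f_2^{\beta,i}$. Substituting $f_2^{\beta,i}=\sum_j(\overline{\mathbb S}_\lambda^\beta)^{-1}_{ij}\,\overrightarrow{\theta(y_\beta^j)}f_2$ from Proposition~\ref{prop-Theta-inv} and recalling the definition \eqref{J_red} of $J^{\red}(\lambda)$, I read off
\[
f_1\star_\lambda f_2=\sum_{\beta,i,j}\bigl(\overline{\mathbb S}_\lambda^\beta\bigr)^{-1}_{ij}\,(\overrightarrow{y_\beta^i}f_1)\cdot(\overrightarrow{\theta(y_\beta^j)}f_2)=\mu\bigl(\overrightarrow{J^{\red}(\lambda)}(f_1\otimes f_2)\bigr),
\]
which is precisely \eqref{sym_space_star_prod}. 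The main obstacle is the weight analysis in the middle step; once the principle that $1\otimes y$ is the sole weight-$0$-first-factor part of $\Delta(y)$ is established, the remainder is a direct substitution.
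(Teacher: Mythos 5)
Your proposal is correct and takes essentially the same approach as the paper's own proof: expand $\varphi_2(\overline\one_\lambda)$, use $U$-linearity of $\varphi_1$ together with the fact that $\Delta(y)-1\otimes y$ has first tensor factors in $U^-_+$ (hence of strictly negative weight) to isolate the $\overline\one_\lambda$-component, arrive at $f_1\star_\lambda f_2=\sum_{\beta,i}\bigl(\overrightarrow{y_\beta^i}f_1\bigr)f_2^{\beta,i}$, and finish by applying Proposition~\ref{prop-Theta-inv} to $f_2$. The only difference is cosmetic: you prove the coproduct fact by induction on $\height\beta$, whereas the paper simply quotes it.
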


\begin{proof}
Observe that
\begin{gather*}
(\varphi_1\ast\varphi_2)(\overline\one_\lambda)=
(\id\otimes\mu)(\varphi_1\otimes\id)(\varphi_2(\overline\one_\lambda))=\\
(\id\otimes\mu)(\varphi_1\otimes\id)\left(\overline\one_\lambda\otimes
f_2+\sum_{\beta\in
Q_+\setminus\{0\}}\sum_iy_\beta^i\cdot\overline\one_\lambda\otimes
f_2^{\beta,i}\right)=\\
(\id\otimes\mu)\left(\varphi_1(\overline\one_\lambda)\otimes
f_2+\sum_{\beta\in
Q_+\setminus\{0\}}\sum_i(\Delta(y_\beta^i)\varphi_1(\overline\one_\lambda))\otimes
f_2^{\beta,i}\right)=\\
\overline\one_\lambda\otimes\left(f_1f_2+\sum_{\beta\in Q_+\setminus\{0\}}
\sum_i\left(\overrightarrow{y_\beta^i}f_1\right)f_2^{\beta,i}\right)+\mbox{lower
order terms},
\end{gather*}
where in the last equation we use the fact that for any $y\in U^-_+$
we have $\Delta(y)=1\otimes y+\sum_k y_k\otimes z_k$ with $y_k\in U^-_+$.
Therefore
\[
f_1\star_\lambda f_2=f_1f_2+\sum_{\beta\in
Q_+\setminus\{0\}}\sum_i\left(\overrightarrow{y_\beta^i}f_1\right)f_2^{\beta,i}=
\sum_{\beta\in Q_+}\sum_i\left(\overrightarrow{y_\beta^i}f_1\right)f_2^{\beta,i}.
\]
To finish the proof it is enough to apply Proposition
\ref{prop-Theta-inv} to $f_2$.
\end{proof}
\begin{remark}
Theorem~\ref{14} together with results of \cite{KST} implies that the
algebras $\Hom_{\U}(L(\lambda), L(\lambda)\otimes F)$, $(\End
L(\lambda))_{\fin}$, and $\bigl(F[0]^{K_{\lambda}+\widetilde{K}_{\lambda}},
\star_{\lambda}\bigr)$ are isomorphic as right Hopf module algebras over
$\U$.
\end{remark}

\section{Limiting properties of the fusion element}\label{Sect-limit}



We say that $\lambda\in\h^*$ is \textit{generic} if
$\langle\lambda+\rho,\beta^\vee\rangle\not\in\N$ for all $\beta\in\Rb_+$. In
this case $L(\lambda)=M(\lambda)$, and we set
$J(\lambda)=J^{\red}(\lambda)$. Notice that $J(\lambda)$ up to a $U^0$-part
equals the fusion element related to the Verma module $M(\lambda)$ (see,
e.g., \cite{ES_DYBE}).

\subsection{Regularity}\label{Subsect-regularity-general}

Let $\lambda_0\in\h^*$. Since $J(\lambda)$ is invariant w.\ r.\ to
$\tau(\theta\otimes\theta)$ (where $\tau$ is the tensor permutation), one
can easily see that the following conditions on $\lambda_0$ are equivalent:
1) for any $U^-$-module $M$ the family of operators $J(\lambda)^M:M\otimes
F[0]^{\widetilde{K}_{\lambda_0}}\to M\otimes F$ naturally defined by
$J(\lambda)$ is regular at $\lambda=\lambda_0$, 2) for any $U^+$-module $N$
the family of operators $J(\lambda)_N:F[0]^{K_{\lambda_0}}\otimes N\to
F\otimes N$ naturally defined by $J(\lambda)$ is regular at
$\lambda=\lambda_0$. We will say that $\lambda_0$ is {\it $J$-regular} if
these conditions are satisfied. Clearly, any generic $\lambda_0$ is
$J$-regular.

\begin{proposition}
Assume that $\lambda_0\in\h^*$ is $J$-regular.
Then $F[0]^{K_{\lambda_0}}=F[0]^{\widetilde{K}_{\lambda_0}}=
F[0]^{K_{\lambda_0}+\widetilde{K}_{\lambda_0}}$.
\end{proposition}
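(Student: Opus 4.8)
The plan is to prove the chain of equalities by showing that $J$-regularity forces the reduced fusion element $J^{\red}(\lambda_0)$ to coincide (up to the harmless $U^0$-part) with the generic fusion element $J(\lambda_0)$, and then reading off the consequences for the star-products. Concretely, I would first observe that the three spaces $F[0]^{K_{\lambda_0}}$, $F[0]^{\widetilde K_{\lambda_0}}$, and $F[0]^{K_{\lambda_0}+\widetilde K_{\lambda_0}}$ satisfy the trivial inclusions $F[0]^{K_{\lambda_0}+\widetilde K_{\lambda_0}} \subseteq F[0]^{K_{\lambda_0}}$ and $F[0]^{K_{\lambda_0}+\widetilde K_{\lambda_0}} \subseteq F[0]^{\widetilde K_{\lambda_0}}$, simply because a larger annihilation condition cuts out a smaller space. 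So the entire content is the reverse inclusions: I must show that $f \in F[0]^{\widetilde K_{\lambda_0}}$ already forces $K_{\lambda_0} f = 0$, and symmetrically.

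\medskip

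The key mechanism is the $\theta\otimes\theta$ symmetry of $J(\lambda)$ noted just before the statement, which is exactly what makes the two $J$-regularity conditions equivalent. I would exploit it as follows. By definition of $J$-regularity, the operator family $J(\lambda)^M$ is regular at $\lambda_0$ when restricted to $M \otimes F[0]^{\widetilde K_{\lambda_0}}$, and dually $J(\lambda)_N$ is regular on $F[0]^{K_{\lambda_0}} \otimes N$. For generic $\lambda$ we have $L(\lambda)=M(\lambda)$ and $J^{\red}(\lambda)=J(\lambda)$, so the operator $f_1 \star_\lambda f_2 = \mu(\overrightarrow{J^{\red}(\lambda)}(f_1\otimes f_2))$ from Theorem~\ref{14} is controlled by $J(\lambda)$. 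The plan is to take the limit $\lambda\to\lambda_0$ along generic directions: $J$-regularity guarantees the limit exists on the invariant subspaces, and the limiting operator, being built from $\overrightarrow{\theta(y_\beta^j)}$ applied to $f$, must annihilate $f$ precisely along the directions in $\theta(K_{\lambda_0})=\widetilde K_{\lambda_0}$ forced by the poles of $(\overline{\mathbb S}^\beta_{\lambda_0})^{-1}$. The crux is that regularity at $\lambda_0$ means the apparent poles of the inverse Shapovalov matrix are cancelled by zeros coming from applying $\overrightarrow{\theta(y_\beta^j)}$ to elements of the invariant subspace; that cancellation is exactly the statement that the vanishing condition imposed by one of $K_{\lambda_0}$, $\widetilde K_{\lambda_0}$ automatically entails the other.

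\medskip

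I expect the main obstacle to be making precise the claim that regularity of the operator family transfers the annihilation property. The cleanest route is probably to argue via the characterizations already established: by Proposition~\ref{prop-u} (and its proof through Propositions~\ref{prop-tilde-K-lambda-0} and \ref{prop-K-lambda-0}) the condition $K_\lambda\zeta_\varphi = 0$ and the condition $\widetilde K_\lambda\zeta_\varphi = 0$ are the two separate constraints whose conjunction defines $F[0]^{K_\lambda+\widetilde K_\lambda}$, and each individually is governed by one half of the intertwiner picture. I would try to show that $J$-regularity makes the relevant singular-vector contributions vanish identically, so that an element satisfying only the $\widetilde K_{\lambda_0}$-condition already lies in the image where the $K_{\lambda_0}$-condition holds too. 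Thus I would establish $F[0]^{\widetilde K_{\lambda_0}} \subseteq F[0]^{K_{\lambda_0}+\widetilde K_{\lambda_0}}$ and, by the $\theta$-symmetry, the mirror inclusion $F[0]^{K_{\lambda_0}} \subseteq F[0]^{K_{\lambda_0}+\widetilde K_{\lambda_0}}$ for free, which combined with the trivial inclusions yields all three equalities. The delicate point is controlling the poles of the inverse Shapovalov form uniformly in $\beta$, i.e.\ ensuring the regularity hypothesis genuinely kills every would-be singular direction and not merely finitely many.
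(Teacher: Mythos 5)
Your reduction to the two nontrivial inclusions and your instinct to exploit $J$-regularity by letting $\lambda\to\lambda_0$ are indeed the paper's strategy, but the mechanism you propose at the decisive step has a genuine gap: the claim that cancellation of the poles of the inverse Shapovalov matrix ``is exactly the statement'' that one vanishing condition entails the other is circular, and in fact no residue bookkeeping of this kind can produce the conclusion. Unwind what regularity in condition 1) actually says: there the $U^-$-leg of $J(\lambda)$ (where the singular directions live) acts on the module $M$, and only the $U^+$-leg acts on $F$. Writing $\bigl(\S^\beta_\lambda\bigr)^{-1}=\frac1t\,C^\beta+O(1)$ along a line through $\lambda_0$ and applying the residue to $1\otimes g$ with $M=U^-$, vanishing of the residue reads $\overrightarrow{\theta\bigl(\sum_j C^\beta_{ij}y^j_\beta\bigr)}g=0$ for all $i$; since the Shapovalov form is symmetric and $\Image C^\beta\subseteq K_{\lambda_0}[-\beta]$ (the content of Lemma \ref{lem-limit-lin-alg-1}), the elements $\theta\bigl(\sum_j C^\beta_{ij}y^j_\beta\bigr)$ lie in $\widetilde{K}_{\lambda_0}$, so these identities are consequences of your hypothesis $g\in F[0]^{\widetilde{K}_{\lambda_0}}$ and carry no information whatsoever about $\overrightarrow{K_{\lambda_0}}g$. (Note also that the poles sit in $\bigl(\S^\beta_\lambda\bigr)^{-1}$ on the Verma module, not in $\bigl(\overline{\S}^\beta_{\lambda_0}\bigr)^{-1}$, which is invertible.) The new condition $\overrightarrow{K_{\lambda_0}}g=0$ is genuinely extra information and requires a device your outline never supplies.

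That device, in the paper, is the passage through singular vectors and Theorem \ref{thm-theta}. For generic $\lambda$ the vector $J(\lambda)^{M(\lambda)}(\one_\lambda\otimes g)$ is a singular vector of weight $\lambda$ in $M(\lambda)\otimes F$; condition 1) of $J$-regularity (with $M=U^-\simeq M(\lambda)$) guarantees that $Z=\lim_{\lambda\to\lambda_0}J(\lambda)^{M(\lambda)}(\one_\lambda\otimes g)$ exists, and, being a limit of singular vectors, $Z$ is itself singular of weight $\lambda_0$, hence defines $\varphi_Z\in\Hom_U(M(\lambda_0),M(\lambda_0)\otimes F)$. The natural map $\Hom_U(M(\lambda_0),M(\lambda_0)\otimes F)\to\Hom_U(L(\lambda_0),L(\lambda_0)\otimes F)$ (which exists thanks to the Joseph--Letzter surjectivity underlying the commutative diagram of Section \ref{Subsect-main-general}) sends $\varphi_Z$ to $\varphi_{\overline Z}$ with $\varphi_{\overline Z}(\overline\one_{\lambda_0})=\overline Z$; since the leading term of $Z$ is $\one_{\lambda_0}\otimes g$, one gets $\Theta(\varphi_{\overline Z})=g$, and Theorem \ref{thm-theta} --- the image of $\Theta$ equals $F[0]^{K_{\lambda_0}+\widetilde{K}_{\lambda_0}}$ --- concludes. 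Observe where the $K_{\lambda_0}$-annihilation really comes from: it is built into Theorem \ref{thm-theta} via Propositions \ref{prop-K-lambda-0} and \ref{prop-v}, whose proofs flip the tensor factors with the $R$-matrix; this ingredient is entirely absent from your proposal and cannot be replaced by pole cancellation. Finally, your plan to get the mirror inclusion ``for free'' from $\theta$-symmetry would itself need justification (one must match the symmetry of $J$ with a compatible symmetry exchanging the two regular actions on $F$); the paper simply runs the symmetric argument using condition 2).
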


\begin{proof}
Let $g\in F[0]^{\widetilde{K}_{\lambda_0}}$. If $\lambda\in\h^*$ is generic,
then the element $J(\lambda)^{M(\lambda)}(\mathbf1_\lambda\otimes g)$ is a
singular vector of weight $\lambda$ in $M(\lambda)\otimes F$. Therefore
$Z:=\lim_{\lambda\to\lambda_0}J(\lambda)^{M(\lambda)}(\mathbf1_\lambda\otimes
g)$ is a singular vector of weight $\lambda_0$ in $M(\lambda_0)\otimes F$,
and hence we have $\varphi_Z\in\Hom_U(M(\lambda_0), M(\lambda_0)\otimes F)$,
$\varphi_Z(\one_{\lambda_0})=Z$.

Under the natural map
$\Hom_U(M(\lambda_0), M(\lambda_0)\otimes F)\to\Hom_U(L(\lambda_0), L(\lambda_0)\otimes
F)$ we have $\varphi_Z\mapsto\varphi_{\overline{Z}}$, where
$\varphi_{\overline{Z}}(\overline\one_{\lambda_0})=\overline{Z}=$ the projection
of $Z$ onto $L(\lambda_0)\otimes F$. Now notice that
$g=\Theta(\varphi_{\overline{Z}})\in
F[0]^{K_{\lambda_0}+\widetilde{K}_{\lambda_0}}$.

The proof of $F[0]^{K_{\lambda_0}}=
F[0]^{K_{\lambda_0}+\widetilde{K}_{\lambda_0}}$ is similar.
\end{proof}

\begin{proposition}\label{prop-surj-1-general}
Assume that $\lambda_0\in\h^*$ is $J$-regular.
Then the natural map $\Hom_U(M(\lambda_0), M(\lambda_0)\otimes
F)\to\Hom_U(L(\lambda_0), L(\lambda_0)\otimes F)$ is surjective.
\end{proposition}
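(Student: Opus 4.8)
The plan is to exploit the previous proposition together with the explicit description of $\Theta$ from Theorem~\ref{thm-theta}. The target map fits into the commutative diagram of $\Theta$-embeddings, so the most natural strategy is to show that every $\varphi\in\Hom_U(L(\lambda_0),L(\lambda_0)\otimes F)$ lifts to $\Hom_U(M(\lambda_0),M(\lambda_0)\otimes F)$. Via $\Theta$, such a $\varphi$ corresponds to an element $f=\Theta(\varphi)\in F[0]^{K_{\lambda_0}+\widetilde{K}_{\lambda_0}}$, and to build a preimage it suffices to produce an element of $\Hom_U(M(\lambda_0),M(\lambda_0)\otimes F)$ whose image in the $L(\lambda_0)$-picture recovers $f$. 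The key mechanism is precisely the limiting construction used in the previous proposition.

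First I would fix $f\in F[0]^{K_{\lambda_0}+\widetilde{K}_{\lambda_0}}$. By the previous proposition, $J$-regularity gives $F[0]^{K_{\lambda_0}+\widetilde{K}_{\lambda_0}}=F[0]^{\widetilde{K}_{\lambda_0}}$, so $f\in F[0]^{\widetilde{K}_{\lambda_0}}$. For generic $\lambda$ near $\lambda_0$ we have $L(\lambda)=M(\lambda)$ and the element $J(\lambda)^{M(\lambda)}(\one_\lambda\otimes f)$ is a singular vector of weight $\lambda$ in $M(\lambda)\otimes F$ (it defines an element of $\Hom_U(M(\lambda),M(\lambda)\otimes F)$ through $\one_\lambda\mapsto J(\lambda)^{M(\lambda)}(\one_\lambda\otimes f)$). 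By $J$-regularity at $\lambda_0$, the limit
\[
Z:=\lim_{\lambda\to\lambda_0}J(\lambda)^{M(\lambda)}(\one_\lambda\otimes f)
\]
exists and is a singular vector of weight $\lambda_0$ in $M(\lambda_0)\otimes F$; hence it yields $\varphi_Z\in\Hom_U(M(\lambda_0),M(\lambda_0)\otimes F)$ with $\varphi_Z(\one_{\lambda_0})=Z$.

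Next I would check that $\varphi_Z$ maps to the prescribed $\varphi$. Passing to the quotient via the natural map gives $\varphi_{\overline Z}\in\Hom_U(L(\lambda_0),L(\lambda_0)\otimes F)$, where $\overline Z$ is the projection of $Z$ onto $L(\lambda_0)\otimes F$. Since $J^{\red}(\lambda_0)(\overline\one_{\lambda_0}\otimes f)$ has leading term $\overline\one_{\lambda_0}\otimes f$, one reads off $\Theta(\varphi_{\overline Z})=f$ by the remark following \eqref{J_red}. Because $\Theta$ is an embedding (Theorem~\ref{thm-theta}), matching the $\Theta$-images forces $\varphi_{\overline Z}=\varphi$, so $\varphi$ lifts and the map is surjective.

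The main obstacle is the interchange of the quotient projection with the limit: one must verify that projecting $Z$ to $L(\lambda_0)\otimes F$ and then applying $\Theta$ genuinely recovers $f$, i.e.\ that the leading ($\beta=0$) component of $Z$ survives the projection and equals $\overline\one_{\lambda_0}\otimes f$. This is where $J$-regularity is essential: it guarantees that the $\lambda\to\lambda_0$ limit of the reduced fusion element acting on $M(\lambda)\otimes F$ exists coefficient-by-coefficient, so that $\overline Z=J^{\red}(\lambda_0)(\overline\one_{\lambda_0}\otimes f)$ and the remark after \eqref{J_red} applies verbatim. Everything else—$U$-equivariance of $\varphi_Z$, weight bookkeeping, and the identification $\Theta(\varphi_{\overline Z})=f$—is routine given the machinery already assembled.
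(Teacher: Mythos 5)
Your proposal is correct and follows essentially the same route as the paper: both take $f\in F[0]^{\widetilde{K}_{\lambda_0}}$ (using the preceding proposition to identify this with $F[0]^{K_{\lambda_0}+\widetilde{K}_{\lambda_0}}$), form the limit $Z=\lim_{\lambda\to\lambda_0}J(\lambda)^{M(\lambda)}(\one_\lambda\otimes f)$ guaranteed by $J$-regularity, observe that $Z$ is a singular vector defining $\varphi_Z\in\Hom_U(M(\lambda_0),M(\lambda_0)\otimes F)$, and check that its image in $\Hom_U(L(\lambda_0),L(\lambda_0)\otimes F)$ is $\Theta^{-1}(f)$. Your additional care about why the $\beta=0$ leading term survives the projection (since the $\beta=0$ part of $J(\lambda)$ is $1\otimes 1$) just makes explicit the step the paper dismisses with ``clearly.''
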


\begin{proof}
We have the isomorphism
\[
\Theta:\Hom_U(L(\lambda_0),
L(\lambda_0)\otimes F)\to
F[0]^{K_{\lambda_0}+\widetilde{K}_{\lambda_0}}=F[0]^{\widetilde{K}_{\lambda_0}}.
\]
Now take $g\in F[0]^{\widetilde{K}_{\lambda_0}}$. Consider
$Z=\lim_{\lambda\to\lambda_0}J(\lambda)^{M(\lambda)}(\mathbf1_\lambda\otimes
g)\in M(\lambda_0)\otimes F$. Since $Z$ a singular vector of weight
$\lambda_0$, we have $\varphi_Z\in\Hom_U(M(\lambda_0), M(\lambda_0)\otimes
F)$, $\varphi_Z(\one_{\lambda_0})=Z$. Clearly, under the mapping
$\Hom_U(M(\lambda_0), M(\lambda_0)\otimes F)\to\Hom_U(L(\lambda_0),
L(\lambda_0)\otimes F)$ the image of $\varphi_Z$ equals to $\Theta^{-1}(g)$,
which proves the proposition.
\end{proof}

\begin{proposition}\label{prop-Kostant}
Assume that $\lambda_0\in\h^*$ is $J$-regular. Then the action map
$U_{\fin}\to(\End L(\lambda_0))_{\fin}$ is surjective.
\end{proposition}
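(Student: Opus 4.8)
The plan is to reduce the statement to a diagram chase in the commutative square of \cite[Proposition~9]{KST} that relates the intertwiner spaces and the locally finite endomorphism algebras of $M(\lambda_0)$ and $L(\lambda_0)$. Recall that in that square the vertical maps $\Phi_{M(\lambda_0)}$ and $\Phi_{L(\lambda_0)}$ are isomorphisms of right $\U$-module algebras, the top horizontal arrow is the natural map $\Hom_U(M(\lambda_0),M(\lambda_0)\otimes F)\to\Hom_U(L(\lambda_0),L(\lambda_0)\otimes F)$, and the bottom horizontal arrow is the induced map $(\End M(\lambda_0))_{\fin}\to(\End L(\lambda_0))_{\fin}$. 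I would combine this square with two facts already in hand: the action map $U_{\fin}\to(\End M(\lambda_0))_{\fin}$ is surjective \cite{Joseph_book,Joseph_Letzter_Verma}, and, by Proposition~\ref{prop-surj-1-general}, $J$-regularity of $\lambda_0$ makes the top horizontal arrow surjective.

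First I would transport surjectivity across the vertical isomorphisms $\Phi$: since the square commutes and both $\Phi_{M(\lambda_0)}$ and $\Phi_{L(\lambda_0)}$ are bijective, surjectivity of the top arrow forces surjectivity of the bottom arrow, so the natural map $(\End M(\lambda_0))_{\fin}\to(\End L(\lambda_0))_{\fin}$ is onto. Then I would factor the action map as the composite $U_{\fin}\to(\End M(\lambda_0))_{\fin}\to(\End L(\lambda_0))_{\fin}$, whose first factor is the surjective Verma-module action map and whose second factor is the surjection just obtained. A composite of surjections is surjective, which is exactly the assertion.

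The one point that deserves genuine verification, as opposed to formal diagram chasing, is the factorization itself: one must check that the bottom horizontal arrow of the square sends the endomorphism of $M(\lambda_0)$ coming from the $\U$-action of an element $u$ to the endomorphism of $L(\lambda_0)$ coming from the action of the same $u$. This is where the $\U$-linearity of the canonical surjection $M(\lambda_0)\to L(\lambda_0)$ enters: each $u\in U$ preserves the maximal submodule $K(\lambda_0)$, so its action descends compatibly, and because every map in the square is a morphism of right $\U$-module algebras and the two action maps are determined by their effect on the respective cyclic generators, the triangle with vertices $U_{\fin}$, $(\End M(\lambda_0))_{\fin}$, $(\End L(\lambda_0))_{\fin}$ commutes. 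I expect the main (and only mild) obstacle to be keeping these identifications straight through $\Phi$ and through the isomorphism $\Theta$ of Theorem~\ref{thm-theta}; once the triangle is known to commute, the conclusion follows with no further computation.
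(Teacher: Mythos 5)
Your proof is correct and follows essentially the same route as the paper's: both combine the known surjectivity of $U_{\fin}\to(\End M(\lambda_0))_{\fin}$ with the surjectivity of the top arrow from Proposition~\ref{prop-surj-1-general}, transported through the isomorphisms $\Phi$ of the commutative square from \cite{KST}, and conclude that a composite of surjections is surjective. The compatibility check you single out (that the action map factors through the induced map $(\End M(\lambda_0))_{\fin}\to(\End L(\lambda_0))_{\fin}$) is taken for granted in the paper, since that map is constructed in Section~\ref{Subsect-main-general} precisely so that this triangle commutes.
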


\begin{proof}
Recall that 
we have the isomorphisms
\begin{gather*}
\Hom_U (M(\lambda_0),
M(\lambda_0)\otimes F)\simeq(\End M(\lambda_0))_{\fin},\\
\Hom_U (L(\lambda_0), L(\lambda_0)\otimes F)\simeq(\End
L(\lambda_0))_{\fin}.
\end{gather*}
It is well known that the action map $U_{\fin}\to(\End M(\lambda_0))_{\fin}$
is surjective for any $\lambda_0\in\h^*$ (see \cite{Joseph_book,
Joseph_Letzter_Verma}). Since by Proposition
\ref{prop-surj-1-general} the map $(\End M(\lambda_0))_{\fin}\to(\End
L(\lambda_0))_{\fin}$ is surjective, the map $U_{\fin}\to(\End
L(\lambda_0))_{\fin}$ is also surjective.
\end{proof}

\begin{proposition}\label{prop-limit-general}
Assume that $\lambda_0\in\h^*$ is $J$-regular. Then for any $f,g\in
F[0]^{K_{\lambda_0}}$ we have $\overrightarrow{J(\lambda)}(f\otimes
g)\rightarrow \overrightarrow{J^{\red}(\lambda_0)}(f\otimes g)$ as
$\lambda\rightarrow\lambda_0$.
\end{proposition}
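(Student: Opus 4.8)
The plan is to expand $\overrightarrow{J(\lambda)}(f\otimes g)$ against a fixed $\lambda_0$-adapted basis of $U^-$, to use the annihilation conditions on $f$ and $g$ to discard every direction lying in $K_{\lambda_0}$, and then to pin down the surviving limit by recognizing it as the singular vector produced in the proof of Proposition~\ref{prop-surj-1-general}.

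First I would fix, for each $\beta\in Q_+$, a basis $y_\beta^1\lc y_\beta^D$ of $U^-[-\beta]$ such that $y_\beta^1\lc y_\beta^d$ (with $d=\dim L(\lambda_0)[\lambda_0-\beta]$) project onto a basis of $L(\lambda_0)[\lambda_0-\beta]$, while $y_\beta^{d+1}\lc y_\beta^D$ span $K_{\lambda_0}\cap U^-[-\beta]$, and use this one basis both to write $J(\lambda)$ for generic $\lambda$ (where $L(\lambda)=M(\lambda)$ and $\overline{\S}_\lambda^\beta$ is the full Shapovalov matrix) and to write $J^{\red}(\lambda_0)$. Since $\lambda_0$ is $J$-regular we have $f,g\in F[0]^{K_{\lambda_0}}=F[0]^{\widetilde K_{\lambda_0}}$, so $\overrightarrow{y_\beta^i}f=0$ and $\overrightarrow{\theta(y_\beta^i)}g=0$ for every $i>d$. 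Setting $g_i^\beta(\lambda)=\sum_{j=1}^{d}\left(\overline{\S}_\lambda^\beta\right)^{-1}_{ij}\overrightarrow{\theta\left(y_\beta^j\right)}g$ (the sum over $j$ being truncated at $d$ because $\overrightarrow{\theta(y_\beta^j)}g=0$ for $j>d$), these vanishings give
\[
\overrightarrow{J(\lambda)}(f\otimes g)=\sum_{\beta\in Q_+}\sum_{i=1}^{d}\left(\overrightarrow{y_\beta^i}f\right)\otimes g_i^\beta(\lambda),
\]
a sum in which only finitely many $\beta$ occur, uniformly for $\lambda$ near $\lambda_0$, because $f$ generates a finite-dimensional module under the left regular $U^-$-action.

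Next I would compute $\lim_{\lambda\to\lambda_0}g_i^\beta(\lambda)$. Feeding the $U^-$-module $M(\lambda)$ into condition~(1) of $J$-regularity and identifying $M(\lambda)\cong U^-$ as $U^-$-modules, the element $J(\lambda)^{M(\lambda)}(\one_\lambda\otimes g)=\sum_\beta\sum_{i=1}^{D}\left(y_\beta^i\one_\lambda\right)\otimes g_i^\beta(\lambda)$ is regular at $\lambda_0$; as the $y_\beta^i$ form a genuine ($\lambda$-independent) basis, each $g_i^\beta(\lambda)$ converges to some $g_i^\beta\in F$, and $Z=\sum_\beta\sum_{i=1}^{D}\left(y_\beta^i\one_{\lambda_0}\right)\otimes g_i^\beta$ is exactly the singular vector of Proposition~\ref{prop-surj-1-general}. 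By that proposition the projection $\overline Z$ of $Z$ to $L(\lambda_0)\otimes F$ equals $\Theta^{-1}(g)(\overline\one_{\lambda_0})$, which by the defining property of the reduced fusion element recorded after \eqref{J_red} is $J^{\red}(\lambda_0)(\overline\one_{\lambda_0}\otimes g)$. Projecting kills the directions $i>d$, so comparing the coefficients of the linearly independent vectors $y_\beta^i\overline\one_{\lambda_0}$ ($1\le i\le d$) yields $g_i^\beta=\sum_{j=1}^{d}\left(\overline{\S}_{\lambda_0}^\beta\right)^{-1}_{ij}\overrightarrow{\theta\left(y_\beta^j\right)}g$ for $i\le d$. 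Letting $\lambda\to\lambda_0$ in the finite sum above and substituting these limits reproduces precisely $\overrightarrow{J^{\red}(\lambda_0)}(f\otimes g)$.

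The hard part is that one cannot argue entrywise: as $\lambda\to\lambda_0$ the matrix $\overline{\S}_\lambda^\beta$ degenerates along $K_{\lambda_0}$, so the individual inverse entries $\left(\overline{\S}_\lambda^\beta\right)^{-1}_{ij}$ generically blow up, and no naive limit of coefficients exists. The role of $J$-regularity is precisely to guarantee that the particular combinations $g_i^\beta(\lambda)$ left over after contracting against $g\in F[0]^{\widetilde K_{\lambda_0}}$ stay bounded, while the $\Theta$/singular-vector identification then evaluates their limits without ever controlling the divergent entries separately. The only remaining routine points are that the limit commutes with the finite sums over $\beta$ and $i$, and that the single adapted basis is compatible with the definitions of both $J(\lambda)$ and $J^{\red}(\lambda_0)$.
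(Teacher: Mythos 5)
Your proof is correct and takes essentially the same route as the paper's: both arguments apply $J$-regularity to $J(\lambda)^{M(\lambda)}(\one_\lambda\otimes g)$ written in a basis of $U^-$ adapted to $K_{\lambda_0}$, identify the projection $\overline Z$ of the limit with $J^{\red}(\lambda_0)(\overline\one_{\lambda_0}\otimes g)$ via the intertwiner $\Theta^{-1}(g)$, and then compare coefficients, using the annihilation of $f$ and $g$ by $K_{\lambda_0}$ and $\widetilde K_{\lambda_0}$ to discard the degenerate directions. The only differences are cosmetic: you cite Proposition~\ref{prop-surj-1-general} for the singular-vector identification where the paper re-derives it inline, and you invoke the vanishing conditions at the start to truncate the sums rather than at the comparison stage.
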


\begin{proof}
For any $\lambda\in\h^*$ we may naturally identify $M(\lambda)$ with $U^-$
as $U^-$-modules. Therefore we know by definition of $J$-regularity that
$J(\lambda)^{M(\lambda)}(\mathbf1_\lambda\otimes g)$ is regular at
$\lambda=\lambda_0$. Thus $J(\lambda)^{M(\lambda)}(\mathbf1_\lambda\otimes
g)\rightarrow Z\in M(\lambda_0)\otimes F$ as $\lambda\rightarrow\lambda_0$.
In an arbitrary basis $y_\beta^i\in U^-[-\beta]$ we have
\[
J(\lambda)^{M(\lambda)}(\mathbf1_\lambda\otimes g)=\sum_{\beta\in
Q_+}\sum_{i,j}\left(\S_\lambda^\beta\right)^{-1}_{ij}
y_\beta^i\mathbf1_\lambda\otimes\overrightarrow{\theta\left(y_\beta^j\right)}g,
\]
and
\[
Z=\sum_{\beta\in Q_+}\sum_{i,j}a^\beta_{ij}\cdot
y_\beta^i\mathbf1_{\lambda_0}\otimes
\overrightarrow{\theta\left(y_\beta^j\right)}g
\]
for some coefficients $a^\beta_{ij}$.

Now choose a basis $y_\beta^i\in U^-[-\beta]$ in the following
way: first take a basis in
$K_{\lambda_0}[-\beta]=K_{\lambda_0}\cap U^-[-\beta]$ and then
extend it arbitrarily to a basis in the whole $U^-[-\beta]$. In
this basis the projection $\overline{Z}\in L(\lambda_0)\otimes F$
of the element $Z$ is given by
\begin{equation}\label{eqn-Z-bar}
\overline{Z}=\sum_{\beta\in
Q_+}\sum_{\ y_\beta^i,y_\beta^j\not\in
K_{\lambda_0}[-\beta]}a^\beta_{ij}\cdot
y_\beta^i\overline\one_{\lambda_0}\otimes
\overrightarrow{\theta\left(y_\beta^j\right)}g.
\end{equation}

Now notice that $Z$, being the limit of singular vectors of weight $\lambda$
in $M(\lambda)\otimes F$, defines the intertwining operator
$\varphi_Z\in\Hom_U(M(\lambda_0), M(\lambda_0)\otimes F)$,
$\varphi_Z(\one_{\lambda_0})=Z$. Under the natural map $\Hom_U(M(\lambda_0),
M(\lambda_0)\otimes F)\to\Hom_U(L(\lambda_0), L(\lambda_0)\otimes F)$ we
have $\varphi_Z\mapsto\varphi_{\overline{Z}}$, where
$\varphi_{\overline{Z}}(\overline\one_{\lambda_0})=\overline{Z}$. Therefore
$\overline{Z}=J^{\red}(\lambda_0)^{M(\lambda_0)}(\overline\one_{\lambda_0}\otimes
g)$ by Proposition \ref{prop-Theta-inv} and the definition of
$J^{\red}(\lambda_0)$. Comparing this with \eqref{eqn-Z-bar} we conclude
that for all $i, j$ such that $y_\beta^i,y_\beta^j\not\in
K_{\lambda_0}[-\beta]$ we have $a^\beta_{ij}=\left(\overline{\mathbb
S}_{\lambda_0}^\beta\right)^{-1}_{ij}$.

Finally,
\begin{gather*}
\overrightarrow{J(\lambda)}(f\otimes g)\rightarrow \sum_{\beta\in
Q_+}\sum_{i,j}a^\beta_{ij} \overrightarrow{y_\beta^i}f\otimes
\overrightarrow{\theta\left(y_\beta^j\right)}g=\\
\sum_{\beta\in Q_+}\sum_{\ y_\beta^i,y_\beta^j\not\in
K_{\lambda_0}[-\beta]}a^\beta_{ij}
\overrightarrow{y_\beta^i}f\otimes
\overrightarrow{\theta\left(y_\beta^j\right)}g=\\
\sum_{\beta\in Q_+}\sum_{\ y_\beta^i,y_\beta^j\not\in
K_{\lambda_0}[-\beta]}\left(\overline{\mathbb
S}_{\lambda_0}^\beta\right)^{-1}_{ij} \overrightarrow{y_\beta^i}f\otimes
\overrightarrow{\theta\left(y_\beta^j\right)}g=
\overrightarrow{J^{\red}(\lambda_0)}(f\otimes g)
\end{gather*}
as $\lambda\to\lambda_0$.
\end{proof}

\begin{corollary}\label{cor-limit-general}
Assume that $\lambda_0\in\h^*$ is $J$-regular.
Let $f_1,f_2\in F[0]^{K_{\lambda_0}}$.
Then $f_1\star_\lambda f_2\rightarrow f_1\star_{\lambda_0}f_2$ as
$\lambda\rightarrow\lambda_0$.\qed
\end{corollary}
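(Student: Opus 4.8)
The plan is to deduce the corollary directly from Theorem~\ref{14} and Proposition~\ref{prop-limit-general}; the only real content is to match up the domains of the various star-products and to observe that the multiplication $\mu$ preserves the relevant limit. Since the statement is a corollary marked with \qed, I expect no hard estimate, only careful bookkeeping.

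First I would record that the right-hand side is well defined. Because $\lambda_0$ is $J$-regular, the first proposition of this section gives $F[0]^{K_{\lambda_0}}=F[0]^{K_{\lambda_0}+\widetilde{K}_{\lambda_0}}$, so $f_1,f_2$ lie in the domain $F[0]^{K_{\lambda_0}+\widetilde{K}_{\lambda_0}}$ of $\star_{\lambda_0}$. Applying Theorem~\ref{14} at $\lambda_0$, one has $f_1\star_{\lambda_0}f_2=\mu\bigl(\overrightarrow{J^{\red}(\lambda_0)}(f_1\otimes f_2)\bigr)$.

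Next I would treat the family on the left. The limit $\lambda\to\lambda_0$ is taken through generic $\lambda$, the regime in which $J(\lambda)$ is defined. For such $\lambda$ we have $L(\lambda)=M(\lambda)$, hence $K_\lambda=\widetilde{K}_\lambda=0$ and $F[0]^{K_\lambda+\widetilde{K}_\lambda}=F[0]$, so $f_1,f_2$ lie in the domain of $\star_\lambda$; moreover $J^{\red}(\lambda)=J(\lambda)$ by the definition of genericity. Thus Theorem~\ref{14} gives $f_1\star_\lambda f_2=\mu\bigl(\overrightarrow{J(\lambda)}(f_1\otimes f_2)\bigr)$. Now Proposition~\ref{prop-limit-general} yields $\overrightarrow{J(\lambda)}(f_1\otimes f_2)\to\overrightarrow{J^{\red}(\lambda_0)}(f_1\otimes f_2)$ in $F\otimes F$ as $\lambda\to\lambda_0$. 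Since $f_1,f_2$ are fixed and $F$ is a sum of finite-dimensional admissible modules under the regular action, all the terms $\overrightarrow{y_\beta^i}f_1\otimes\overrightarrow{\theta(y_\beta^j)}f_2$ lie in a single fixed finite-dimensional subspace of $F\otimes F$; restricted to this subspace $\mu$ is a fixed linear map and hence continuous. Applying $\mu$ to the limit I obtain
\[
f_1\star_\lambda f_2=\mu\bigl(\overrightarrow{J(\lambda)}(f_1\otimes f_2)\bigr)\to\mu\bigl(\overrightarrow{J^{\red}(\lambda_0)}(f_1\otimes f_2)\bigr)=f_1\star_{\lambda_0}f_2,
\]
which is the assertion.

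The step requiring the most care is not an analytic estimate but the domain bookkeeping: one must verify that $f_1,f_2$ simultaneously belong to the domain of $\star_\lambda$ for generic $\lambda$ and to the domain of $\star_{\lambda_0}$, which is precisely what $J$-regularity supplies through the collapse $F[0]^{K_{\lambda_0}}=F[0]^{K_{\lambda_0}+\widetilde{K}_{\lambda_0}}$, and that the limit is taken through generic $\lambda$ so that $J^{\red}(\lambda)=J(\lambda)$ and Theorem~\ref{14} applies on both ends.
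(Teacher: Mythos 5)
Your proof is correct and is exactly the argument the paper has in mind: the corollary is stated with a \qed because it follows immediately from Theorem~\ref{14} (applied at generic $\lambda$ and at $\lambda_0$) together with Proposition~\ref{prop-limit-general}, which is precisely the chain you assembled. Your explicit domain bookkeeping --- $K_\lambda=\widetilde{K}_\lambda=0$ for generic $\lambda$, and $F[0]^{K_{\lambda_0}}=F[0]^{K_{\lambda_0}+\widetilde{K}_{\lambda_0}}$ from $J$-regularity so that $\star_{\lambda_0}$ is defined on $f_1,f_2$ --- is exactly the content the paper leaves implicit.
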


\subsection{One distinguished root case}\label{Subsect-limit-one-root}


\begin{theorem}\label{thm-reg-one-root-1}
Let $\alpha\in\Rb_+$. Consider $\lambda_0\in\h^*$ that satisfies
$\langle\lambda_0+\rho,\alpha^\vee\rangle=n\in\N$,
$\langle\lambda_0+\rho,\beta^\vee\rangle\not\in\N$ for all
$\beta\in\Rb_+\setminus\{\alpha\}$. Then $\lambda_0$ is $J$-regular.
\end{theorem}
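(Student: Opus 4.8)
The plan is to reduce $J$-regularity to a pole analysis of the inverse Shapovalov matrices $(\S_\lambda^\beta)^{-1}$ near $\lambda_0$, and then to control those poles via the Shapovalov determinant formula together with the structure of $K(\lambda_0)$ supplied by Proposition~\ref{prop-third}.

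First I would unwind condition (1) of $J$-regularity. Identifying $M(\lambda)$ with $U^-$ and passing to this universal case, it suffices to show that for each $\beta\in Q_+$ the element
\[
\sum_{i,j}\bigl(\S_\lambda^\beta\bigr)^{-1}_{ij}\,y_\beta^i\otimes\overrightarrow{\theta(y_\beta^j)}g
\]
is regular at $\lambda=\lambda_0$ for every $g\in F[0]^{\widetilde{K}_{\lambda_0}}$. For each $\beta$ I would pick a basis $y_\beta^1,\dots,y_\beta^N$ of $U^-[-\beta]$ whose first $c=\dim K_{\lambda_0}[-\beta]$ members span $K_{\lambda_0}[-\beta]$, calling these indices \emph{singular} and the rest \emph{regular}. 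Since $g$ is annihilated by $\widetilde{K}_{\lambda_0}=\theta(K_{\lambda_0})$, one has $\overrightarrow{\theta(y_\beta^j)}g=0$ for every singular $j$, so only regular $j$ survive; and as the $y_\beta^i$ and the elements $\overrightarrow{\theta(y_\beta^j)}g$ are independent of $\lambda$, the displayed element is regular at $\lambda_0$ as soon as the entries $\bigl(\S_\lambda^\beta\bigr)^{-1}_{ij}$ with $j$ regular (and all $i$) are regular there. By symmetry of $\S_\lambda^\beta$ this is exactly the assertion that the regular--regular and regular--singular blocks of $(\S_\lambda^\beta)^{-1}$ have no pole at $\lambda_0$.

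Next I would pin down the local degeneration. By the quantum Shapovalov determinant formula and the hypothesis $\langle\lambda_0+\rho,\beta^\vee\rangle\notin\N$ for $\beta\neq\alpha$, the only factor of $\det\S_\lambda^\beta$ vanishing near $\lambda_0$ is $\langle\lambda+\rho,\alpha^\vee\rangle-n$, to the power $P(\beta-n\alpha)$, where $P$ is the Kostant partition function. Hence the pole locus of $(\S_\lambda^\beta)^{-1}$ lies on the smooth hypersurface $\{\langle\lambda+\rho,\alpha^\vee\rangle=n\}$, and regularity may be tested on a generic transversal line $\lambda_\epsilon=\lambda_0+\epsilon\nu$, along which $\det\S_{\lambda_\epsilon}^\beta$ vanishes to order exactly $P(\beta-n\alpha)$. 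On the other hand, the singular vector of weight $\lambda_0-n\alpha$ gives a nonzero, hence injective, Verma homomorphism $M(\lambda_0-n\alpha)\to M(\lambda_0)$ whose image is $K(\lambda_0)$ by Proposition~\ref{prop-third}; therefore $\operatorname{corank}\S_{\lambda_0}^\beta=\dim K_{\lambda_0}[-\beta]=P(\beta-n\alpha)$, matching the vanishing order of the determinant.

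Finally I would extract the simple degeneration and invert in blocks. Writing $\S_{\lambda_\epsilon}^\beta=\left(\begin{smallmatrix}A&C\\ C^{T}&B\end{smallmatrix}\right)$ in the adapted basis, one has $A(0)=0$, $C(0)=0$ and $B(0)$ invertible, so the Schur complement $S=A-CB^{-1}C^{T}=\epsilon A_1+O(\epsilon^2)$ and $\det\S_{\lambda_\epsilon}^\beta=\det B\cdot\det S$; thus the determinant vanishes to order $c$ if and only if $A_1$ is invertible, and the equality of order and corank from the previous step forces precisely this (every degenerate eigenvalue is a simple zero). With $A_1$ invertible the block-inverse formula yields the regular--singular block $-S^{-1}CB^{-1}=O(1)$ and the regular--regular block $B^{-1}+B^{-1}C^{T}S^{-1}CB^{-1}=O(1)$, while only the singular--singular block $S^{-1}=O(\epsilon^{-1})$ acquires a pole. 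This is exactly the regularity needed in the first step, so $\lambda_0$ is $J$-regular. The main obstacle is the third paragraph: identifying the vanishing order of the determinant with the corank, which is the Jantzen-type statement that the radical of the Shapovalov form degenerates only to first order; the remaining steps are bookkeeping with the block inverse.
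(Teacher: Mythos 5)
Your proposal is correct, but it follows a genuinely different route from the paper's proof. The cosmetic difference is that you verify condition (1) of $J$-regularity (the singular columns are killed by $\widetilde{K}_{\lambda_0}$ acting in the $U^+$ slot on $g$), while the paper verifies the equivalent condition (2) (the residue is killed by $K_{\lambda_0}$ acting on $f$). The substantive difference is how the pole of $\left(\S_\lambda^\beta\right)^{-1}$ is controlled. The paper imports from \cite{ESt_2} the fact that $J(\lambda)$ has at most simple poles, so that along a transversal line $A_t^{-1}=\frac{1}{t}C+D_t$, and then needs only the soft residue identity $A_0C=0$, which places $\Image C$ inside $\Ker A_0=K_{\lambda_0}[-\beta]$; no determinant formula and no corank count are needed. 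You instead derive the pole structure from scratch: the quantum Shapovalov determinant formula (an external ingredient, due to De Concini and Kac, which the paper never uses and which you should cite) gives vanishing order exactly $P(\beta-n\alpha)$, Proposition \ref{prop-third} together with injectivity of nonzero morphisms between Verma modules (valid for $\Uc$ since $U^-$ has no zero divisors) gives $\operatorname{corank}\S_{\lambda_0}^\beta=P(\beta-n\alpha)$, and feeding the equality of these two numbers through the Schur-complement inversion shows the pole is simple and confined to the singular--singular block. In effect you re-prove, in this special case, precisely the simple-pole property that the paper cites: your ``order equals corank'' step is the statement that the Shapovalov form degenerates only to first order, which (by the Jantzen sum formula) is particular to the one-distinguished-root hypothesis and fails at more degenerate weights --- this is why the paper proves Theorem \ref{thm-reg-subset} by reducing to the present theorem via $K_{\lambda'}\subset K_{\lambda_0}$ rather than by a determinant computation, whereas your method would not extend directly. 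What each approach buys: the paper's argument is shorter, needs weaker input, and is the one that propagates to the general case; yours is self-contained modulo a standard formula and yields finer information (the exact location and order of the pole). Two minor points: your block analysis actually works along an arbitrary, not merely generic, transversal line, since transversality alone forces the restricted determinant to vanish to order exactly $P(\beta-n\alpha)$; and both your proof and the paper's interpret ``regular at $\lambda_0$'' as regularity along such lines, so you match the paper's standard of rigor on that point.
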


\begin{proof}

Fix an arbitrary line $l\subset\h^*$ through $\lambda_0$,
$l=\{\lambda_0+t\nu\,|\,t\in\mathbb R\}$, transversal to the hyperplane
$\langle\lambda+\rho,\alpha^\vee\rangle=n$.

Identify $M(\lambda)$ with $U^-$ in the standard way. Recall
that we have a basis $y_\beta^i\in U^-[-\beta]$ for $\beta\in
Q_+$. Let $L\left(\S_\lambda^\beta\right)\in\End
U^-[-\beta]$ be given by the matrix $\left(\mathbb
S_\lambda^\beta\right)_{ij}$ in the basis
$y_\beta^i$. Notice that $\Ker L\left(\mathbb
S_{\lambda_0}^\beta\right)=\Ker\mathbb
S_{\lambda_0}^\beta=K_{\lambda_0}[-\beta]=K_{\lambda_0}\cap
U^-[-\beta]$. For any $\lambda\in l$ sufficiently close to
$\lambda_0$, $\lambda\neq\lambda_0$ we have $M(\lambda)$ is
irreducible, and $L\left(\S_\lambda^\beta\right)$ is
invertible for any $\beta\in Q_+$. In this notation we have
\[
J(\lambda)=\sum_{\beta\in Q_+}\sum_j L\left(\mathbb
S_\lambda^\beta\right)^{-1}y_\beta^j\otimes\theta\left(y_\beta^j\right).
\]

Take $\lambda=\lambda_0+t\nu\in l$. Fix any $\beta\in Q_+$ and set
$V=U^-[-\beta]$, $A_t=L\left(\S_\lambda^\beta\right)$, $V_0=\Ker
A_0=K_{\lambda_0}[-\beta]\subset V$. Write $A_t=A_0+tB_t$, where $B_t$ is
regular at $t=0$. Since $J(\lambda)$ may have at most simple poles (see,
e.g., \cite{ESt_2}) we have $A_t^{-1}=\frac{1}{t}C+D_t$, where $D_t$ is
regular at $t=0$.

\begin{lemma}\label{lem-limit-lin-alg-1}
$\Image C\subset V_0$.
\end{lemma}

\begin{proof}
We have $A_tA_t^{-1}=\id$ for any $t\neq0$, i.e.,
$\frac{1}{t}A_0C+A_0D_t+B_tC+tB_tD_t=\id$. Since the left hand
side should be regular at $t=0$, we have $A_0C=0$, which proves
the lemma.
\end{proof}

For $t\neq 0$ set $J_t=\sum_jA_t^{-1}y_j\otimes\theta(y_j)$ (from now on we
are omitting the index $\beta$ for the sake of brevity). By Lemma
\ref{lem-limit-lin-alg-1} we have $Cy_j\in V_0=K_{\lambda_0}[-\beta]$. Hence
for $f\in F[0]^{K_{\lambda_0}}$ we have $\overrightarrow{Cy_j}f=0$.
Therefore
$\overrightarrow{A_t^{-1}y_j}f=\frac{1}{t}\overrightarrow{Cy_j}f+\overrightarrow{D_ty_j}f=
\overrightarrow{D_ty_j}f$. This proves the regularity of $(J_t)_N(f\otimes
\cdot)$ at $t=0$, i.e., the regularity of $J_q(\lambda)_N(f\otimes \cdot)$
at $\lambda=\lambda_0$.
\end{proof}

\subsection{Subset of simple roots case}\label{Subsect-limit-new}


\begin{theorem}\label{thm-reg-subset}
Let $\Gamma\subset\Pi$. Consider $\lambda_0\in\h^*$ that satisfies
$\langle\lambda_0+\rho,\alpha_i^\vee \rangle\in\N$ for all
$\alpha_i\in\Gamma$, $\langle\lambda_0+\rho,\beta^\vee\rangle\not\in\N$
for all $\beta\in\Rb_+\setminus\spanv\Gamma$. Then $\lambda_0$ is
$J$-regular.
\end{theorem}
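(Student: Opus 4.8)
The plan is to reduce to the one-root Theorem~\ref{thm-reg-one-root-1} by a codimension argument. I use the second description of $J$-regularity: I must show that for every $U^+$-module $N$ and every $f\in F[0]^{K_{\lambda_0}}$ the family $J(\lambda)_N(f\otimes\,\cdot\,)\colon N\to F\otimes N$ is regular at $\lambda=\lambda_0$. Working in a fixed weight component this is a rational, finite-dimensional-operator-valued function of $\lambda$ whose poles lie on the hyperplanes $H_{\gamma,k}=\{\langle\lambda+\rho,\gamma^\vee\rangle=k\}$ ($\gamma\in\Rb_+$, $k\in\N$) arising from the zeros of the Shapovalov determinants. Writing $n_i=\langle\lambda_0+\rho,\alpha_i^\vee\rangle$, the genericity off $\spanv\Gamma$ forces the only such hyperplanes through $\lambda_0$ to be those with $\gamma\in\Rb_\Gamma:=\Rb_+\cap\spanv\Gamma$; in coordinates transverse to $E=\bigcap_{\alpha_i\in\Gamma}H_{\alpha_i,n_i}$ these form the coroot arrangement of the sub-root system generated by $\Gamma$. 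Since the polar locus of a rational function is pure of codimension one, it suffices to prove regularity at a \emph{generic} point $\lambda_1$ of each individual hyperplane $H_{\gamma,k}$.

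First I would check that such a $\lambda_1$ (close to $\lambda_0$ and lying on no other wall through $\lambda_0$) satisfies the hypotheses of Theorem~\ref{thm-reg-one-root-1} for the single root $\gamma$: indeed $\langle\lambda_1+\rho,\gamma^\vee\rangle=k\in\N$, while $\langle\lambda_1+\rho,\beta^\vee\rangle\notin\N$ for every other $\beta\in\Rb_+$, using that non-integrality is an open condition and that the only integral walls nearby are those through $\lambda_0$. Thus $\lambda_1$ is $J$-regular, and — reading off the proof of that theorem via Lemma~\ref{lem-limit-lin-alg-1} — the residue of $J(\lambda)_N(f\otimes\,\cdot\,)$ at $\lambda_1$ is governed by $\overrightarrow{K_{\lambda_1}}f$, where $K_{\lambda_1}=U^-w_\gamma$ is generated by the singular vector $w_\gamma\in U^-[-k\gamma]$ of $M(\lambda_1)$.

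The crux is then to show that this residue vanishes for our \emph{fixed} $f\in F[0]^{K_{\lambda_0}}$, i.e.\ that $\overrightarrow{w_\gamma}f=0$, equivalently $w_\gamma\in K_{\lambda_0}$. This is where the shape of $\lambda_0$ enters. Since $k\gamma$ lies in the positive root lattice of $\Gamma$, every element of $U^-$ of weight $-k\gamma$ — in particular $w_\gamma$ — already lies in the subalgebra $U^-_\Gamma$ generated by the $f_i$ with $\alpha_i\in\Gamma$. Hence $w_\gamma\overline\one_{\lambda_0}$ lies in $U^-_\Gamma\overline\one_{\lambda_0}$, which is the finite-dimensional irreducible Levi module $L_\Gamma(\lambda_0)$ (finite-dimensional because $f_i^{n_i}\overline\one_{\lambda_0}=0$ makes $\lambda_0$ dominant integral for $\Gamma$). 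The vector $w_\gamma\overline\one_{\lambda_0}$ has weight $\lambda_0-k\gamma=s_\gamma\cdot\lambda_0$; since $\lambda_0$ is dominant regular for the root system of $\Gamma$ and $\langle\rho,\gamma^\vee\rangle>0$, the weight $s_\gamma\cdot\lambda_0=s_\gamma\lambda_0-\langle\rho,\gamma^\vee\rangle\gamma$ lies strictly below the extremal weight $s_\gamma\lambda_0$ on the sphere $\|\,\cdot\,+\rho\|=\|\lambda_0+\rho\|$, so it is not a weight of $L_\Gamma(\lambda_0)$. Therefore $w_\gamma\overline\one_{\lambda_0}=0$, i.e.\ $w_\gamma\in K_{\lambda_0}$, the residue vanishes, and $H_{\gamma,k}$ is not a polar component.

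Having excluded every candidate wall, purity of the polar divisor yields regularity at $\lambda_0$, which is the asserted $J$-regularity. The main obstacle is precisely the middle step: at $\lambda_0$ several walls collide and the Shapovalov form degenerates to higher order, so the simple-pole analysis of Lemma~\ref{lem-limit-lin-alg-1} cannot be applied directly at $\lambda_0$; the whole point is to trade this higher-order degeneration for a family of codimension-one statements and then to control the \emph{$\lambda_1$-dependent} singular vectors $w_\gamma$ well enough to see that they annihilate $\overline\one_{\lambda_0}$. The extremal-weight computation inside the finite-dimensional Levi module is what makes this last control possible; along the way I would also confirm the standard facts that $w_\gamma\in U^-_\Gamma$ and that $U^-_\Gamma\overline\one_{\lambda_0}=L_\Gamma(\lambda_0)$, both of which follow from $\lambda_0$ being dominant integral for $\Gamma$.
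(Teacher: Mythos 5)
Your proposal is correct, and its global skeleton coincides with the paper's: you reduce regularity at $\lambda_0$ to regularity at generic points $\lambda_1$ of the finitely many walls $\langle\lambda-\lambda_0,\gamma^\vee\rangle=0$, $\gamma\in\Rb_+\cap\spanv\Gamma$ (using that the poles of $J(\lambda)$ are simple and lie on such walls), check that such $\lambda_1$ satisfies the hypotheses of Theorem~\ref{thm-reg-one-root-1}, and then observe that everything hinges on the inclusion $K_{\lambda_1}\subset K_{\lambda_0}$, i.e.\ on $w_\gamma\in K_{\lambda_0}$ for the singular vector $w_\gamma$ generating $K_{\lambda_1}$. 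This inclusion is exactly the paper's Lemma~\ref{KsubsetK}, and it is here that your route genuinely diverges. The paper proves the lemma in two stages: first for $\Gamma=\Pi$ (dominant integral $\lambda_0$) by the reflection argument $s_\gamma(\lambda_0-k\gamma)=\lambda_0+\langle\rho,\gamma^\vee\rangle\gamma>\lambda_0$ inside $L(\lambda_0)$, and then in general by an auxiliary reduction: introducing the family $X(\lambda_0)$ of dominant integral weights agreeing with $\lambda_0$ on $\Gamma$, proving $K_{\lambda_1}\subset\bigcap_{\mu\in X(\lambda_0)}K_\mu$, and identifying this intersection with the ideal $K'_{\lambda_0}$ generated by the $f_i^{n_i}$ via Propositions~\ref{prop-third} and~\ref{prop-fourth}. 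You instead argue directly inside the Levi: since $k\gamma$ lies in the root lattice of $\Gamma$, the singular vector $w_\gamma$ lies in the subalgebra $U^-_\Gamma$, so $w_\gamma\overline\one_{\lambda_0}$ lives in the finite-dimensional integrable Levi highest weight module $U^-_\Gamma\overline\one_{\lambda_0}$ (finite-dimensional by Proposition~\ref{prop-first}), where a weight argument kills it. Your version is shorter and makes the mechanism more transparent (it is the paper's dominant-integral argument transplanted to the Levi, and subsumes that case when $\Gamma=\Pi$); its cost is that it imports standard but unproved-in-the-paper facts about the quantum Levi subalgebra (finite-dimensionality of the quotient of the Levi Verma module by the $f_i^{n_i}$, and $W_\Gamma$-invariance of its weights, the Levi analogue of Proposition~\ref{prop-second}), whereas the paper's intersection argument stays entirely within its stated Propositions for $\Uc$ itself.

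One point needs repair, though it is only a local one: your justification that $\lambda_0-k\gamma$ is not a weight of $L_\Gamma(\lambda_0)$ --- ``it lies strictly below the extremal weight $s_\gamma\lambda_0$'' --- is a non sequitur as written, since plenty of weights of a module lie strictly below an extremal weight. The clean fix is the reflection argument used in the paper: $s_\gamma(\lambda_0-k\gamma)=\lambda_0+\langle\rho,\gamma^\vee\rangle\gamma\not\leq\lambda_0$, so by $W_\Gamma$-invariance of the set of weights of the finite-dimensional module $U^-_\Gamma\overline\one_{\lambda_0}$, the weight $\lambda_0-k\gamma$ cannot occur. (Alternatively, one can correctly invoke the norm fact that $\lambda_0$ is the unique weight $\mu$ of $L_\Gamma(\lambda_0)$ with $\|\mu+\rho\|=\|\lambda_0+\rho\|$, but that fact itself requires an argument, so the reflection computation is the shorter path.) With this substitution your proof is complete.
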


\begin{proof}
Recall that the only singularities of
$J(\lambda)$ near $\lambda_0$ are (simple) poles on the hyperplanes
$\langle\lambda-\lambda_0,\alpha^\vee\rangle=0$ for $\alpha\in\B
R_+\cap\spanv\Gamma$ (see, e.g., \cite{ESt_2}). Therefore it is enough to
show that for any $f\in F[0]^{K_{\lambda_0}}$ the operator
$J(\lambda)_N(f\otimes \cdot)$ has no singularity at any such hyperplane.

Take $\alpha\in\Rb_+\cap\spanv\Gamma$ and consider a hyperplane
$\langle\lambda-\lambda_0,\alpha^\vee\rangle=0$. Take an arbitrary
$\lambda'\in\h^*$ such that
$\langle\lambda'-\lambda_0,\alpha^\vee\rangle=0$, and
$\langle\lambda'+\rho,\beta^\vee\rangle\not\in\N$ for all $\beta\in\B
R_+\setminus\{\alpha\}$.

\begin{lemma}\label{KsubsetK}
$K_{\lambda'}\subset K_{\lambda_0}$.
\end{lemma}
\begin{proof}
First, assume that $\lambda_0$ is dominant integral, i.e., $\Gamma=\Pi$. Set
$n=\langle\lambda_0+\rho,\alpha^\vee\rangle$. Consider the irreducible
$\U$-module $L(\lambda_0)$ with a highest weight vector
$\overline\one_{\lambda_0}$. The inclusion $K_{\lambda'}\subset
K_{\lambda_0}$ is equivalent to the equality
$K_{\lambda'}\overline\one_{\lambda_0}=0$. By Proposition~\ref{prop-third},
$K_{\lambda'}$ is generated by an element $x_{\lambda'}\in U^-$ of weight
$-n\alpha$. Hence
the vector $x_{\lambda'}\overline\one_{\lambda_0}$ has weight
$\lambda_0-n\alpha$. Thus it suffices to show that $\lambda_0-n\alpha$ is
not a weight of $L(\lambda_0)$. Indeed, easy computations show that
$s_\alpha(\lambda_0-n\alpha)=\lambda_0+\langle\rho,\alpha^\vee\rangle\alpha
> \lambda_0$, and since the set of weights of $L(\lambda_0)$ is
$W$-invariant, the lemma is proved in this case.

Now, let us consider the general case. Let $\lambda_0$ be an arbitrary
weight satisfying the assumptions of the theorem. Set
$n_i=\langle\lambda_0+\rho,\alpha_i^\vee\rangle$. Denote by $K'_{\lambda_0}$
the right ideal in $U^-$ generated by the elements $f_i^{n_i}$,
$\alpha_i\in\Gamma$. By Proposition~\ref{prop-first}, we have
$K'_{\lambda_0}\subset K_{\lambda_0}$.

Denote by $X(\lambda_0)$ the set of dominant integral weights $\mu$ such
that $\langle\mu+\rho,\alpha_i^\vee\rangle=n_i$ for all $\alpha_i\in\Gamma$.
We have proved that $K_{\lambda'}\subset K_\mu$ for any $\mu\in
X(\lambda_0)$. Thus $K_{\lambda'}\subset\bigcap_{\mu\in X(\lambda_0)}
K_\mu$.

For $\mu\in X(\lambda_0)$, Proposition~\ref{prop-fourth} yields that $K_\mu$
is generated by $K'_{\lambda_0}$ and the elements
$f_j^{\langle\mu+\rho,\alpha_j^\vee\rangle}$, $j\in\Pi\setminus\Gamma$.
Notice that choosing $\mu\in X(\lambda_0)$, all the numbers
$\langle\mu+\rho,\alpha_j^\vee\rangle$ can be made arbitrary large. Hence,
$K'_{\lambda_0}=\bigcap_{\mu\in X(\lambda_0)} K_\mu$. Therefore,
$K_{\lambda'}\subset K'_{\lambda_0}\subset K_{\lambda_0}$. The lemma is
proved.
\end{proof}

The lemma implies that $F[0]^{K_{\lambda'}}\supset F[0]^{K_{\lambda_0}}$,
and applying Theorem \ref{thm-reg-one-root-1} to $\lambda'$ we complete the
proof of the theorem.
\end{proof}


\begin{remark}
In fact, it is possible to prove that if $\lambda_0$ satisfies the
assumptions of Theorem~\ref{thm-reg-subset}, then $K_{\lambda_0}$ is
generated by the elements
$f_i^{\langle\lambda_0+\rho,\alpha_i^\vee\rangle}$, $\alpha_i\in\Gamma$.
Indeed, this fact is well-known in the classical case $q=1$, see \cite{Ja}.
Let us consider the highest weight module $V_q=M_q (\lambda_0
)/K'_{\lambda_0}$ (we have used at this point the notation $M_q$ for the
Verma modules over $\Uc$). We have to prove that it is irreducible under
assumptions of the theorem. If not, there exists some weight space $V_q
[\mu]$ such that the determinant of the restriction of the Shapovalov form
on $V_q$ to $V_q [\mu]$ is zero. Let us denote the Shapovalov form on $V_q$
by $\mathbb{S}_q$ and its restriction to $V_q [\mu]$ by $\mathbb{S}_q
[\mu]$. Taking limit $q\to 1$ (it can be done in the same way as in
\cite[Sections~3.4.5--3.4.6]{Joseph_book}) we see that $\mathbb{S}_q
[\mu]\to \mathbb{S}_1 [\mu]$ and $\det \mathbb{S}_q [\mu]\to \det
\mathbb{S}_1 [\mu]$. However, the latter determinant is non-zero because
$V_1$ is irreducible. Hence, $V_q$ is also irreducible.
\end{remark}

\subsection{Application to Poisson homogeneous spaces}\label{Subsect-symm}

Let $\Gamma\subset\Pi$. Assume that $\lambda\in\h^*$ is such that
$\langle\lambda,\alpha^\vee\rangle=0$ for all $\alpha\in\Gamma$, and
$\langle\lambda+\rho,\beta^\vee\rangle\not\in\N$ for all $\beta\in\B
R_+\setminus\spanv\Gamma$. By Theorem \ref{thm-reg-subset}, $\lambda$ is
$J$-regular. In particular, $
F[0]^{K_{\lambda}+\widetilde{K}_{\lambda}}=F[0]^{K_{\lambda}}$.

In what follows it will be more convenient to write $F_q$, $J_q$, and
$K_{q,\lambda}$ instead of $F$, $J$, and $K_\lambda$. We will also need the
classical limits $F_1=\lim_{q\to 1}F_q$ and $K_{1,\lambda}=\lim_{q\to
1}K_{q,\lambda}$. They can be defined in the same way as
in~\cite[Sections~3.4.5--3.4.6]{Joseph_book}.

Clearly, $F_1$ is the algebra of regular functions on the connected simply
connected group $G$, whose Lie algebra is $\g$. Let $\kg$ be a reductive
subalgebra of $\g$ which contains $\h$ and is defined by
$\Gamma$, $K$ the corresponding subgroup of $G$, and $F(G/K)$ the algebra of
regular functions on the homogeneous space $G/K$. According to \cite[Theorem
33]{KST}, we have $F(G/K)=F_1[0]^{K_{1,\lambda}}$. Therefore we get

\begin{proposition}\label{prop-lim}
$\lim_{q\to 1}F_q[0]^{K_{q,\lambda}}=F(G/K)$.\qed
\end{proposition}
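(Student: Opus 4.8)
The plan is to reduce the statement to the already-established classical identity and then to show that passage to the $K_{q,\lambda}$-invariants commutes with the specialization $q\to1$. By \cite[Theorem~33]{KST} we have $F(G/K)=F_1[0]^{K_{1,\lambda}}$, so it suffices to prove
\[
\lim_{q\to 1}F_q[0]^{K_{q,\lambda}}=F_1[0]^{K_{1,\lambda}}.
\]
First I would set up the specialization exactly as in \cite[Sections~3.4.5--3.4.6]{Joseph_book}: fix an integral form over a suitable localization of $\kk[q^{\pm 1/n}]$ at $q=1$ and use the Peter--Weyl decomposition $F_q=\bigoplus_\nu F_q^{(\nu)}$ into finite-dimensional isotypic blocks indexed by the dominant integral weights $\nu$, each free of finite rank and specializing to the classical block $F_1^{(\nu)}$. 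Since every block is finite-dimensional, the whole argument then takes place, block by block, inside finite-dimensional linear algebra over the ground ring.

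The next step is to cut out the invariants by a \emph{fixed}, $q$-independent system of equations. The weight $\lambda$ satisfies the hypotheses of Theorem~\ref{thm-reg-subset} (with $\Gamma$ the subset defining $\kg$), and in fact $n_i:=\langle\lambda+\rho,\alpha_i^\vee\rangle=1$ for $\alpha_i\in\Gamma$; by the Remark following Theorem~\ref{thm-reg-subset}, $K_{q,\lambda}$ is the right ideal of $U^-$ generated by the $f_i$, $\alpha_i\in\Gamma$, and the same generators cut out $K_{1,\lambda}$ in the limit. Because $\overrightarrow{\,\cdot\,}$ is a left action realized by the algebra homomorphism $\overrightarrow{ab}=\overrightarrow{a}\,\overrightarrow{b}$, membership in the invariants is governed only by the leading generators,
\[
F_q[0]^{K_{q,\lambda}}=\bigl\{f\in F_q[0]\ :\ \overrightarrow{f_i}\,f=0\ \text{for all}\ \alpha_i\in\Gamma\bigr\},
\]
and the operators $\overrightarrow{f_i}\colon F_q^{(\nu)}[0]\to F_q^{(\nu)}[-\alpha_i]$ have matrix entries depending flatly on $q$ and specializing to the classical operators at $q=1$. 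Passing the relations $\overrightarrow{f_i}\,f_q=0$ to the limit immediately yields the inclusion $\lim_{q\to1}F_q[0]^{K_{q,\lambda}}\subseteq F_1[0]^{K_{1,\lambda}}$.

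It remains to show that no drop in dimension occurs, i.e.\ that the two sides have equal dimension in each block; this is the main obstacle, since the kernel dimension of a family of operators is only upper semicontinuous and could a priori jump at $q=1$. To rule this out I would invoke the $q$-independence of the character of $L_q(\lambda)$. By Theorem~\ref{thm-theta} and Proposition~\ref{prop-u}, which hold at every value of $q$ including $q=1$, there is an isomorphism $F_q[0]^{K_{q,\lambda}}\cong\Hom_U\bigl(L_q(\lambda),L_q(\lambda)\otimes F_q\bigr)$, whose dimension in the block $\nu$ is a multiplicity determined entirely by the weight multiplicities $\dim L_q(\lambda)[\mu]$. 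The irreducibility argument of the Remark following Theorem~\ref{thm-reg-subset} (taking $q\to1$ in the Shapovalov determinants as in \cite[Sections~3.4.5--3.4.6]{Joseph_book}) gives $\dim L_q(\lambda)[\mu]=\dim L_1(\lambda)[\mu]$ for all $\mu$, so these multiplicities, hence $\dim F_q^{(\nu)}[0]^{K_{q,\lambda}}$, are constant for $q$ near $1$. Since the flatness of the chosen integral form guarantees that the limit preserves the generic dimension, this constancy together with the inclusion of the previous paragraph forces equality block by block, whence $\lim_{q\to1}F_q[0]^{K_{q,\lambda}}=F_1[0]^{K_{1,\lambda}}=F(G/K)$.
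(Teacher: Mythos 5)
Your strategy is sound and, for what it is worth, far more explicit than the paper's own treatment: the paper defines $F_1=\lim_{q\to1}F_q$ and $K_{1,\lambda}=\lim_{q\to1}K_{q,\lambda}$ following \cite[Sections~3.4.5--3.4.6]{Joseph_book}, quotes \cite[Theorem~33]{KST} for $F(G/K)=F_1[0]^{K_{1,\lambda}}$, and treats the commutation of specialization with taking invariants as immediate, with no argument at all. You correctly identify that commutation as the substantive point, and your reduction to fixed generators (indeed $\langle\lambda+\rho,\alpha_i^\vee\rangle=1$ for $\alpha_i\in\Gamma$, so $K_{q,\lambda}$ is generated by the $f_i$ by the Remark after Theorem~\ref{thm-reg-subset}) and the semicontinuity inclusion $\lim_{q\to1}F_q[0]^{K_{q,\lambda}}\subseteq F_1[0]^{K_{1,\lambda}}$ are both fine. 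The gap is at the crux, the dimension constancy: you assert that the block-$\nu$ dimension of $\Hom_U\bigl(L_q(\lambda),L_q(\lambda)\otimes F_q\bigr)$ is ``determined entirely by the weight multiplicities $\dim L_q(\lambda)[\mu]$'' and deduce constancy from $\dim L_q(\lambda)[\mu]=\dim L_1(\lambda)[\mu]$. This is unjustified and essentially circular: a Hom-space dimension is not a function of formal characters when one compares modules over \emph{different} algebras ($\Uc$ versus $U(\g)$), and a principle of the form ``equal characters across the specialization implies equal Hom-multiplicities'' is exactly the rigidity your flatness argument is meant to establish --- if it were available for free, you could apply it directly to $F_q[0]^{K_{q,\lambda}}$ and skip everything else. (That such multiplicities are not controlled by character data is the theme of the whole paper: Kostant's problem is precisely about this.)

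The gap is repairable with pieces you already have, but the character argument must run through the finite-dimensional module $L_q(\nu)$, not through $L_q(\lambda)$. Under the left regular action, $F_q^{(\nu)}$ is a direct sum of $\dim L_q(\nu)$ copies of $L_q(\nu)$, so your fixed-generator description gives
\[
F_q^{(\nu)}[0]^{K_{q,\lambda}}\;\cong\;
L_q(\nu)^*\otimes\bigl\{v\in L_q(\nu)[0]\ :\ f_iv=0\ \mbox{\rm for all}\ \alpha_i\in\Gamma\bigr\},
\]
and a zero-weight vector killed by all $f_i$, $\alpha_i\in\Gamma$, is a lowest weight vector of weight $0$ for the Levi quantum subgroup attached to $\Gamma$; by complete reducibility of finite-dimensional modules at generic $q$ it therefore lies in the trivial isotypic component of $L_q(\nu)$ restricted to that subgroup. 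Hence $\dim F_q^{(\nu)}[0]^{K_{q,\lambda}}$ equals $\dim L_q(\nu)$ times the multiplicity of the trivial representation of the Levi in $L_q(\nu)$, and both factors are $q$-independent because finite-dimensional modules at generic $q$ have the same characters and the same decomposition multiplicities as their classical counterparts. This supplies the equality of generic and classical dimensions in each block, which together with your semicontinuity inclusion closes the proof.
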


Furthermore, since $F_q[0]^{K_{q,\lambda}}$ is a Hopf module algebra over
$\U$, $G/K$ is a Poisson homogeneous space over $G$ equipped with the
Poisson-Lie structure defined by the Drinfeld-Jimbo classical $r$-matrix
$r_0=\sum_{\alpha\in\Rb_+}e_\alpha\wedge e_{-\alpha}$.

All such structures on $G/K$ were described in~\cite{KMST}. It follows
from~~\cite{KMST} that any such Poisson structure on $G/K$ is uniquely
determined by an an intermediate Levi subalgebra $\n$ satisfying
$\kg\subset\n\subset\g$ and some $\lambda\in\h^*$ which satisfies certain
conditions, in particular, $\langle\lambda,\alpha^\vee\rangle=0$ for
$\alpha\in\Gamma$ and $\langle\lambda,\beta^\vee\rangle\not\in\Z$ for
$\beta\in\spanv\Gamma_\n\setminus\spanv\Gamma$. Here $\Gamma_\n$ is the set
of simple roots defining $\n$.

Now we can describe the Poisson bracket on $G/K$ defined by
$\star_\lambda$-multiplication on $F_q[0]^{K_{q,\lambda}}$.

\begin{theorem}\label{thm-lim}
Assume that $\langle\lambda_0,\alpha^\vee\rangle=0$ for $\alpha\in\Gamma$
and $\langle\lambda_0,\beta^\vee\rangle\not\in\Z$ for $\beta\in\B
R_+\setminus\spanv\Gamma$. Then the classical limit of
$(F_q[0]^{K_{q,\lambda_0}},\star_{\lambda_0})$ is the algebra $F(G/K)$ of
regular functions on $G/K$ equipped with the Poisson homogeneous structure
defined by $\n=\g$ and $\lambda_0$.
\end{theorem}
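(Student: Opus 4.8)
The plan is to compute the classical limit of the $\star_{\lambda_0}$-product on $F_q[0]^{K_{q,\lambda_0}}$ and identify the resulting Poisson bracket with the one attached to the data $(\n,\lambda_0)$ in the classification of \cite{KMST}. By Theorem~\ref{14} we have the explicit formula $f_1\star_{\lambda_0}f_2=\mu\bigl(\overrightarrow{J^{\red}(\lambda_0)}(f_1\otimes f_2)\bigr)$, and since $\lambda_0$ is $J$-regular (by Theorem~\ref{thm-reg-subset}, as the hypotheses here are those of that theorem), Proposition~\ref{prop-limit-general} and Corollary~\ref{cor-limit-general} tell us that for $f_1,f_2\in F[0]^{K_{\lambda_0}}$ the reduced fusion element $J^{\red}(\lambda_0)$ is the genuine limit of the universal fusion element $J(\lambda)$ as $\lambda\to\lambda_0$. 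This is the crucial point: it lets me work with $J(\lambda)$ for \emph{generic} $\lambda$, where $L(\lambda)=M(\lambda)$ and $J(\lambda)$ coincides up to a $U^0$-factor with the Verma-module fusion element, whose expansion in powers of $(q-1)$ is known.

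First I would write $q=e^{\hbar/2}$ (or $q=1+\tfrac12\hbar+\cdots$) and expand everything to first order in $\hbar$. The product $\mu$ on $F_q$ reduces at $\hbar=0$ to the commutative pointwise product on $F_1=\kk[G]$, and $J(\lambda)=1\otimes 1+\hbar\,j_1(\lambda)+O(\hbar^2)$ for some $j_1(\lambda)\in U^-\widehat\otimes U^+$ (up to the $U^0$-correction). The antisymmetrized first-order term of $\star_{\lambda_0}$ then produces the Poisson bracket
\[
\{f_1,f_2\}=\mu\Bigl(\overrightarrow{j_1(\lambda_0)-\tau(j_1(\lambda_0))}(f_1\otimes f_2)\Bigr),
\]
where $\tau$ is the flip. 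The standard computation (as in \cite{ES_DYBE}) identifies the antisymmetrized leading term of the fusion element with a classical dynamical $r$-matrix, namely the Drinfeld--Jimbo $r$-matrix $r_0=\sum_{\alpha\in\Rb_+}e_\alpha\wedge e_{-\alpha}$ shifted by a $\lambda_0$-dependent Cartan piece coming from the $\height\beta=1$ terms of $J^{\red}$. I would read off this Cartan contribution from the explicit inverse Shapovalov entries $(\overline{\S}_{\lambda_0}^{\alpha_i})^{-1}$ at the simple-root level, since those encode precisely the dependence on $\langle\lambda_0,\alpha^\vee\rangle$.

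Next I would match this bracket against the classification in \cite{KMST}. That result asserts each Poisson-homogeneous structure on $G/K$ is determined by an intermediate Levi $\n$ with $\kg\subset\n\subset\g$ together with a parameter in $\h^*$; the Levi records which simple-root directions the $r$-matrix is ``degenerate'' along. Under our hypothesis $\langle\lambda_0,\beta^\vee\rangle\notin\Z$ for every $\beta\in\Rb_+\setminus\spanv\Gamma$, none of the off-$\Gamma$ root directions contributes a genuine degeneration, which forces the intermediate Levi to be all of $\g$. Thus I expect the bracket to correspond to $\n=\g$ and the given $\lambda_0$, exactly as claimed.

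The main obstacle will be the careful bookkeeping of the $U^0$-part and the $\hbar$-expansion of $J^{\red}(\lambda_0)$: one must verify that the limiting procedure $\lambda\to\lambda_0$ (justified by $J$-regularity) and the classical limit $q\to1$ commute on $F[0]^{K_{\lambda_0}}$, so that the leading Poisson term is genuinely computed by $J^{\red}(\lambda_0)$ rather than being corrupted by the poles that $J(\lambda)$ develops off $F[0]^{K_{\lambda_0}}$. Establishing this interchange of limits, and then verifying that the resulting $r$-matrix data lands on the specific point $(\n=\g,\lambda_0)$ of the \cite{KMST} parametrization rather than on some smaller Levi, is where the real work lies; the rest is the routine first-order deformation calculation.
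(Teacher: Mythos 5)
Your overall route is the same as the paper's: invoke Theorem~\ref{thm-reg-subset} for $J$-regularity, use Proposition~\ref{prop-limit-general} and Corollary~\ref{cor-limit-general} to replace $J^{\red}(\lambda_0)$ by the fusion element $J(\lambda)$ at generic $\lambda$, expand semiclassically, and match the resulting bracket against \cite{KMST}. But your key analytic step is wrong. At \emph{fixed} generic $\lambda$ the expansion $J(\lambda)=1\otimes1+\hbar j_1(\lambda)+O(\hbar^2)$ is false: as $q\to1$ the Shapovalov entries tend to finite nonzero constants (for $U_q\sl_2$ the level-one entry tends essentially to $\langle\lambda,\alpha^\vee\rangle$), so $J_q(\lambda)$ tends to the nontrivial \emph{classical} fusion element $J_1(\lambda)$ of $U(\g)$. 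Consequently the $q\to1$ limit of $\star_\lambda$ taken your way is the noncommutative product on $\bigl(\End L(\lambda)\bigr)_{\fin}$ at $q=1$; it is not commutative, and no Poisson bracket appears at order $\hbar$. What the paper actually does is set $q=e^{-\hbar/2}$ and expand $J_q\bigl(\tfrac{\lambda}{\hbar}\bigr)=1\otimes1+\hbar j(\lambda)+O(\hbar^2)$: the rescaling of the dynamical parameter by $1/\hbar$ is exactly what makes the rescaled Shapovalov entries blow up like $\tfrac{1}{\hbar}$, hence kills the zeroth-order correction and produces the \emph{trigonometric} (exponential) dependence on $\lambda$. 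Without this rescaling the whole semiclassical computation collapses.

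Second, your identification of the first-order term --- ``$r_0$ shifted by a $\lambda_0$-dependent Cartan piece coming from the $\height\beta=1$ terms of $J^{\red}$'' --- is also incorrect. The height-one terms of $J^{\red}$ are of the form $f_i\otimes e_i$, i.e.\ root-vector terms, not Cartan terms; and the terms with $\height\beta\geq2$ are \emph{not} of order $\hbar^{\height\beta}$ --- because of the structure of the Shapovalov determinant, the inverse Shapovalov entries in every weight $-\beta$ contribute at order $\hbar$, so all positive roots enter the first-order term. The correct statement, which the paper quotes from \cite{ES_DYBE}, is that $r(\lambda)=j(\lambda)-j(\lambda)^{21}$ is the standard trigonometric solution of the classical dynamical Yang--Baxter equation, with $\lambda$-dependent coefficients on every $e_\alpha\wedge e_{-\alpha}$. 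The matching with \cite{KMST} then consists in recognizing that $\{f_1,f_2\}=\lim_{\lambda\to\lambda_0}\mu\bigl(\overrightarrow{r(\lambda)}(f_1\otimes f_2)\bigr)$ is precisely the bracket attached there to the pair $(\n=\g,\lambda_0)$ --- not in your heuristic that ``no off-$\Gamma$ direction degenerates, hence $\n=\g$,'' which is not an argument one can check against the classification. (Your concern about interchanging $\lambda\to\lambda_0$ with $q\to1$ is fair, and the paper indeed glosses over it, but it is secondary to the two errors above.)
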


\begin{proof}
By Theorem \ref{thm-reg-subset}, $\lambda_0$ is $J$-regular, i.e., for $f_1,
f_2\in F_q[0]^{K_{q,\lambda_0}}$ we have
\[
f_1\star_{\lambda_0}f_2=\lim_{\lambda\to\lambda_0}f_1\star_{\lambda}f_2=
\mu\left(\overrightarrow{J_q(\lambda)}(f_1\otimes f_2)\right).
\]
Take $q=e^{-\frac{\hbar}{2}}$. Then
$J_q\left(\frac{\lambda}{\hbar}\right)=1\otimes1+\hbar
j(\lambda)+O(\hbar^2)$, and $r(\lambda)=j(\lambda)-j(\lambda)^{21}$ is the
standard trigonometric solution of the classical dynamical Yang-Baxter
equation (see, e.g., \cite{ES_DYBE}). Thus the Poisson bracket on $F(G/K)$
that corresponds to $(F_q[0]^{K_{q,\lambda_0}},\star_{\lambda_0})$ is given
by
\[
\{f_1,f_2\}=\lim_{\lambda\to\lambda_0}\mu\left(\overrightarrow{r(\lambda)}(f_1\otimes
f_2)\right)
\]
for $f_1, f_2\in F(G/K)$. By \cite{KMST}, this is exactly the Poisson
structure defined by $\n=\g$ and $\lambda_0$.
\end{proof}

Notice that an analogous result for simple Lie algebras of classical type
was obtained in \cite{Mudrov} using reflection equation algebras.



\medskip

Proposition \ref{prop-lim} and Theorem \ref{thm-lim} suggest a conjecture
which we formulate below.

Let $G$ be a connected 
Poisson affine algebraic group, $\g$ the corresponding Lie bialgebra with
the co-bracket $\delta$, $X$ a Poisson
homogeneous 
$G$-variety, $Y$ an affine Zariski open dense subset of $X$. Consider the
Poisson algebra $F(Y)$ of regular functions on $Y$. Let $U_q\g$ be a
quantized universal enveloping algebra corresponding to $\g$.

\begin{conjecture}\label{conj-PHS}
There exists a Hopf module algebra over $U_q\g$ whose classical limit is
$F(Y)$.
\end{conjecture}

Let us show another example which confirms this conjecture. Consider the
case $X=G$. Let $D(\g)$ be the classical double of $\g$. According to
\cite{D_poiss_hom_spaces}, Poisson $G$-homogeneous structures on $G$ are in
one-to-one correspondence with Largangian subalgebras of $D(\g)$ transversal
to $\g\subset D(\g)$. Consider such a Lagrangian subalgebra
$\mathfrak{l}\subset D(\g)$, which corresponds to a certain Poisson
$G$-homogeneous structure on $G$. It is well known \cite{D_poiss_lie} that
$\mathfrak{l}$ also induces a new Poisson-Lie structure on $G$, which
differs from the original one by a so-called classical twist. Hence we
obtain a new Lie bialgebra structure $\delta_1$ on the Lie algebra $\g$.

The following conjecture was made in \cite{KPST} and later published in
\cite{KPSST}.

\begin{conjecture}\label{conj-twist}
There exists an element $T$ in a certain completion of $(U_q\g)^{\otimes2}$
which satisfies
\begin{equation}\label{eqn-twist}
T^{12}(\Delta\otimes\id)(T)=T^{23}(\id\otimes\Delta)(T)
\end{equation}
and $(\varepsilon\otimes\id)(T)=(\id\otimes\varepsilon)(T)=1$ such that the
Hopf algebra $U_{q,T}\g$ quantizes $(\g,\delta_1)$. Here $U_{q,T}\g$ and
$U_q\g$ are isomorphic as algebras, and the co-multiplication on $U_{q,T}\g$
is given by $\Delta_T(a)=T\Delta(a)T^{-1}$.
\end{conjecture}

This conjecture was proved in \cite{H, EH}.

Now let $F_q(G)$ be the restricted dual of $U_q\g$. It is well known that
$F_q(G)$ quantizes $F(G)$. Let us equip $F_q(G)$ with a new product defined
by $f_1\star_T f_2= \mu\left(\overrightarrow{T}(f_1\otimes f_2)\right)$.
According to (\ref{eqn-twist}), $\star_T$ is associative. Hence we get

\begin{corollary}
The algebra $(F_q(G), \star_T)$ is a Hopf module algebra over $U_q\g$ which
quantizes the Poisson homogeneous structure on $G$ defined by
$\mathfrak{l}$.\qed
\end{corollary}

\end{sloppy}

\small

\noindent E.K.: Department of Mathematics, Kharkov National University,\\
4 Svobody Sq., Kharkov \,61077, Ukraine\\
e-mail: {\small \tt eugene.a.karolinsky@univer.kharkov.ua}

\medskip

\noindent A.S.: Department of Mathematics, University of G\"oteborg,\\
SE-412 96 G\"oteborg, Sweden\\
e-mail: {\small \tt astolin@math.chalmers.se}

\medskip

\noindent V.T.: St.\,Petersburg Branch of Steklov Mathematical Institute,\\
Fontanka 27, St.\,Petersburg \,191023, Russia;\\
Department of Mathematical Sciences, IUPUI,\\
Indianapolis, IN 46202, USA \\
e-mail: {\small \tt vt@pdmi.ras.ru; vt@math.iupui.edu}

\end{document}